\documentclass[11pt,reqno]{amsart}
\usepackage[utf8]{inputenc}
\usepackage[margin=1.2in, centering]{geometry}
\geometry{letterpaper}   
\usepackage{amsmath,amssymb,latexsym,mathtools,comment}

\usepackage[shortlabels]{enumitem}

\usepackage{amsthm}
\newtheorem{thm}{Theorem}[section]
\newtheorem{lem}[thm]{Lemma}

\newtheorem{prop}[thm]{Proposition}
\newtheorem{problem}[thm]{Problem}

\numberwithin{equation}{section}
\theoremstyle{definition}

\newtheorem{defn}[thm]{Definition}

\newtheorem{remark}[thm]{Remark}

\newtheorem*{nota*}{Notation}

\usepackage{tikz}
\usepackage{graphicx}
\DeclareGraphicsExtensions{.pdf,.png,.jpg}
\usepackage{subfig}

\newcommand{\bN}{\mathbb{N}}
\newcommand{\bC}{\mathbb{C}}

\newcommand{\bR}{\mathbb{R}}

\newcommand{\bZ}{\mathbb{Z}}
\newcommand{\bT}{\mathbb{T}}

\newcommand{\RR}{\mathbb{R}}
\newcommand{\ZZ}{\mathbb{Z}}
\newcommand{\NN}{\mathbb{N}}
\newcommand{\TT}{\mathbb{T}}
\def\forinfmany{\text{ for infinitely many }}

\newcommand{\rbr}[1]{\left( {#1} \right)}

\newcommand{\cbr}[1]{\left\{ {#1} \right\}}

\newcommand{\abs}[1]{\left| {#1} \right|}

\newcommand{\norm}[1]{{\left\Vert#1\right\Vert}}

\DeclareMathOperator*{\supp}{supp}

\usepackage{hyperref}

\begin{document}
\title{Fourier restriction and well-approximable numbers}
\date{}

\author{Robert Fraser}
\address{Department of Mathematics and Statistics, Wichita State University, Wichita, KS, USA}
	\email{robert.fraser@wichita.edu}

\author{Kyle Hambrook}
\address{Department of Mathematics and Statistics, San Jose State University, San Jose, CA, USA}
	\email{kyle.hambrook@sjsu.edu}

\author{Donggeun Ryou}
\address{Department of Mathematics, Indiana University, Bloomington, IN, USA}
	\email{dryou@iu.edu}

\begin{abstract}
We use a deterministic construction to prove the optimality of the exponent in the Mockenhaupt-Mitsis-Bak-Seeger Fourier restriction theorem for dimension $d=1$ and parameter range $0 < a,b \leq d$ and $b\leq 2a$. Previous constructions by Hambrook and {\L}aba \cite{HL2013} and Chen \cite{chen} required randomness and only covered the range  $0 < b \leq a \leq d=1$. We also resolve a question of Seeger \cite{seeger-private} about the Fourier restriction inequality on the sets of well-approximable numbers.  
\end{abstract}

\maketitle

\section{Introduction}


\subsection{Statement of the problem}

This paper concerns the optimality of the range of exponents $p$ in the following $L^2$ Fourier restriction theorem.  

\begin{thm}[Mockenhaupt-Mitsis-Bak-Seeger]\label{frac ST}
Let $0 < a, b < d$, with $d$ an integer.
Define $p_{*}(a,b,d) = (4d-4a+2b)/b.$
Suppose $\mu$ is a Borel probability measure on $\RR^d$ with the following properties: 
\begin{enumerate}[(A)]
    \item $\mu(B(x,r)) \lesssim r^{a}$ for all $x \in \RR^d$ and all $r > 0$.  
    \item $|\mathcal{F}(\mu)(\xi)| \lesssim (1+|\xi|)^{-b/2}$ for all $\xi \in \RR^d$.  
\end{enumerate}
Then, for each $p \geq p_{*}(a,b,d)$, the following holds: 
\begin{enumerate}[(A)]
\setcounter{enumi}{17}

\item $\|\mathcal{F}(f\mu)\|_{L^p(\lambda)} \lesssim_p \|f\|_{L^2(\mu)}$ for all $f \in L^2(\mu)$.
\end{enumerate}

\end{thm}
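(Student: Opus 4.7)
The plan is to follow the standard Tomas--Stein / Bak--Seeger argument via $TT^*$, Littlewood--Paley decomposition, and interpolation. First, by duality and $TT^*$, the restriction inequality is equivalent to the convolution inequality $\|g*\mathcal{F}(\mu)\|_{L^p(\lambda)} \lesssim \|g\|_{L^{p'}(\lambda)}$, so it suffices to bound the operator $T: g \mapsto g * \hat\mu$ from $L^{p'}$ to $L^p$.

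Next, fix a smooth Littlewood--Paley bump $\psi$ supported in $\{1/2 \le |\xi| \le 2\}$ and a low-frequency cutoff $\phi$ with $\phi(\xi) + \sum_{k \ge 1}\psi(\xi/2^k) \equiv 1$, and decompose $\hat\mu = K_0 + \sum_{k\ge 1} K_k$ where $K_k(\xi) := \psi(\xi/2^k)\hat\mu(\xi)$. The low-frequency piece $K_0$ gives a convolution operator bounded on every $L^p$, so the key is to control $T_k: g \mapsto g * K_k$ for $k \ge 1$. Hypothesis (B) yields $\|K_k\|_\infty \lesssim 2^{-kb/2}$ and hence the crude bound $\|T_k\|_{L^1 \to L^\infty} \lesssim 2^{-kb/2}$. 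On the physical side, $\widehat{K_k} = \widehat{\psi(\cdot/2^k)} * \tilde\mu$, where $\tilde\mu$ is the pushforward of $\mu$ under $x \mapsto -x$; combining the uniform bound $\lesssim 2^{kd}$ and the Schwartz decay at scale $2^{-k}$ of $\widehat{\psi(\cdot/2^k)}$ with the ball condition (A) produces $\|\widehat{K_k}\|_\infty \lesssim 2^{k(d-a)}$, and hence by Plancherel $\|T_k\|_{L^2 \to L^2} \lesssim 2^{k(d-a)}$.

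Riesz--Thorin interpolation between these two estimates then gives
\[
\|T_k\|_{L^{p'} \to L^p} \lesssim 2^{k\eta(p)}, \qquad \eta(p) = -\frac{b}{2} + \frac{b + 2d - 2a}{p},
\]
and one checks that $\eta(p) < 0$ precisely when $p > p_*(a,b,d) = (4d-4a+2b)/b$. Summing the resulting geometric series over $k$ immediately yields the theorem for every $p > p_*$.

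The main obstacle is the endpoint $p = p_*$, at which $\eta(p_*) = 0$ and the dyadic bounds just fail to sum. I would address this following Bak--Seeger by Stein's analytic interpolation: introduce an analytic family of convolution kernels $K^z := \sum_{k \ge 1} 2^{k(z - z_0)}K_k$, establish $L^1 \to L^\infty$ and $L^2 \to L^2$ bounds of polynomial growth $\lesssim (1 + |\im z|)^C$ on two vertical lines with $\re z$ chosen so that the corresponding dyadic exponents strictly sum, and invoke Stein's interpolation theorem in the strip to extract the endpoint $L^{p_*'} \to L^{p_*}$ bound at $z = 0$. Equivalently, one can argue by real interpolation through Lorentz spaces, upgrading the per-scale bounds to restricted-weak-type estimates and combining them. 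This endpoint refinement is the only delicate part; everything else follows from the elementary dyadic estimates above.
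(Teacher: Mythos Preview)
The paper does not actually prove this theorem; it is quoted as a known background result, with the open range $p>p_*$ attributed to Mockenhaupt and Mitsis and the endpoint to Bak and Seeger. Your sketch is precisely the standard argument from those references: $TT^*$ reduces matters to the $L^{p'}\!\to L^p$ bound for convolution with $\widehat\mu$; a dyadic frequency decomposition together with (B) gives $\|T_k\|_{L^1\to L^\infty}\lesssim 2^{-kb/2}$, and (A) combined with the Schwartz tails of $2^{kd}\widehat\psi(2^k\cdot)$ gives $\|\widehat{K_k}\|_\infty\lesssim 2^{k(d-a)}$; Riesz--Thorin then yields the exponent $\eta(p)$ you computed, which sums for $p>p_*$; and the endpoint is obtained by the Bak--Seeger analytic-interpolation refinement you describe. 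So your proposal is correct and matches the literature proof that the paper cites rather than reproduces.
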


\begin{nota*} 
$\lambda$ is Lebesgue measure on $\RR^d$ and $B(x,r)$ is the open ball with center $x$ and radius $r$. A statement of the form ``$A(x) \lesssim B(x)$ for all $x$'' means ``there exists a constant $C>0$ such that $A(x) \leq C B(x)$ for all $x$.'' If the implied constant $C$ depends on a relevant parameter, say $\epsilon$, we write $\lesssim_{\epsilon}$ instead of $\lesssim$. The usage of $\gtrsim$ is analogous. 
$A(x) \sim B(x)$ means $A(x) \lesssim B(x)$ and $B(x) \lesssim A(x)$. 
%
For a measure $\mu$ on $\RR^d$, the Fourier transform of $\mu$ is given by $\mathcal{F}(\mu)(\xi) = \int_{\RR^d} e^{-2\pi i x \cdot \xi} d\mu(x)$. 
We denote $[-1/2,1/2]$ by $\bT$. 
If the support of $\mu$ is contained in $\bT^d$, the Fourier coefficients are given by  
$\widehat{\mu}(s) = \int_{\bT^d} e^{-2\pi i x \cdot s} d\mu(x)$. For an interval $I $ in $\bR$, we denote the Lebesgue measure of $I$ by $|I|$. 
\end{nota*}

Mockenhaupt \cite{mock} and Mitsis \cite{mitsis-res} independently proved Theorem \ref{frac ST} with $p > p_{*}(a,b,d)$;  
the endpoint case $p = p_{*}(a,b,d)$ was established by Bak and Seeger \cite{bak-seeger}. 
Theorem \ref{frac ST} is a generalization of the classic Stein-Tomas Theorem \cite{tomas-1975}, \cite{tomas-symp}, 
which can be understood as the special case of Theorem \ref{frac ST} in which $\mu$ is the surface measure on a sphere in $\RR^d$, $d \geq 2$. Generalizations of the Stein-Tomas Theorem to quadratic hypersurfaces and arbitrary smooth submanifolds in $\RR^d$ were obtained, respectively, by Strichartz \cite{Strichartz77} and Greenleaf \cite{Greenleaf}; these results are also consequences of Theorem \ref{frac ST}.

The following problem was suggested in the original papers of Mockenhaupt \cite{mock} and Mitsis \cite{mitsis-res}.

\begin{problem}\label{prob}
Given $a,b,d$, determine whether the value $p_{*}(a,b,d)$ in Theorem \ref{frac ST} is optimal. 
\end{problem}
 
To prove that $p_{*}(a,b,d)$ is optimal for a fixed triple $(a,b,d)$, we must prove the following: 
For each $p_0 < p_*(a,b,d)$, 
there exists a Borel probability measure $\mu$ on $\RR^d$ that satisfies (A) and (B) and there exists $p \geq p_0$ such that (R) is false.

\begin{remark}
There are constructions of measures which satisfy (A), (B), and (R) for $p$ outside the range $p \geq p_{*}(a,b,d)$;  see Shmerkin and Suomala \cite{shmerkin-suomala}, Chen and Seeger \cite{chen-seeger}, {\L}aba and Wang \cite{laba-wang}, Ryou \cite{Ryou-2023}. But this is a different problem than the one we consider here. 
\end{remark}

\subsection{Motivation}

As motivation for our main result, we start by discussing some counterexamples in the theory of restriction to curves and surfaces.

When $d \geq 2$ and $a=b=d-1$, $p_{*}(a,b,d)$ in Theorem \ref{frac ST} is known to be optimal because of a construction due (according to \cite{tomas-symp}) to Anthony W. Knapp. 
In Knapp's construction, $\mu$ is the surface measure on a sphere, which satisfies (A) and (B) with $a=b=d-1$ (hence $p_{*}(a,b,d) = (2d+2)/(d-1)$).  
Knapp's function $f$ is an indicator function of a sufficiently flat cap on the sphere. (To be more precise, $f$ is the indicator function of a rectangular region in $\RR^d$ with center at a point on the sphere, with one side of length $\delta$, and with $d-1$ sides of length $\sqrt{\delta}$. The region is oriented so that the side of length $\delta$ lies normal to the sphere. The estimate (R) is shown to fail by taking $\delta \to 0$.) 

In fact, Knapp's example can be modified to apply for \textit{any} smooth hypersurface. The following result appears as Lemma 3 of Strichartz \cite{Strichartz77} in the case $q = 2$. It appears as Example 1.8 of Demeter \cite{Demeter} for general $q$. 
\begin{thm}\label{generalknapp}
Let $1 \leq p, q \leq \infty.$ Let $S$ be a smooth $(d-1)$-dimensional submanifold of $\mathbb{R}^d$ and let $\mu$ be any smooth Borel probability measure on $S$. Then the estimate
\[\|\mathcal{F}(f\mu)\|_{L^p(\lambda)} \lesssim_{p,q} \|f\|_{L^q(\mu)} \text{ for all $f \in L^q(\mu)$}\]
cannot hold if $p < \frac{q(d + 1)}{(q - 1) (d - 1)}.$
\end{thm}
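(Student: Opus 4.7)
The plan is to adapt the classical Knapp cap example to the smooth hypersurface setting. Since $S$ is a smooth $(d-1)$-dimensional submanifold of $\RR^d$ and $\mu$ is a smooth probability measure on $S$, I begin by selecting a point $x_0 \in S$ lying in the interior of $\supp \mu$ at which the density of $\mu$ with respect to surface measure is strictly positive, and localize to a neighborhood of $x_0$. After translating $x_0$ to the origin and rotating $S$ so that its tangent hyperplane at $x_0$ is $\{x_d = 0\}$, I may assume that $S$ is locally the graph of a smooth function $\phi: \RR^{d-1} \to \RR$ with $\phi(0) = 0$ and $\nabla \phi(0) = 0$. Under this change of variables, $\mu$ pulls back to a measure on $\RR^{d-1}$ whose density is bounded above and below on a fixed neighborhood of the origin.

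For a small parameter $\delta > 0$ I define the Knapp cap
\[ C_\delta = \{(x', \phi(x')) : |x'| \leq \sqrt{\delta}\}. \]
Taylor's theorem together with $\phi(0)=0$ and $\nabla\phi(0)=0$ gives $|\phi(x')| \lesssim \delta$ on $C_\delta$, so that $C_\delta$ is contained in a rectangular slab of dimensions $\sqrt{\delta} \times \cdots \times \sqrt{\delta} \times \delta$ with the short direction normal to $S$ at $x_0$. The two-sided bound on the density yields $\mu(C_\delta) \sim \delta^{(d-1)/2}$. Taking $f = \mathbf{1}_{C_\delta}$, we then have $\|f\|_{L^q(\mu)} \sim \delta^{(d-1)/(2q)}$.

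On the Fourier side, consider the dual rectangle
\[ R_\delta^* = \{(\xi', \xi_d) \in \RR^d : |\xi'| \leq c\,\delta^{-1/2},\ |\xi_d| \leq c\,\delta^{-1}\} \]
for a sufficiently small absolute constant $c > 0$. For $x = (x', \phi(x')) \in C_\delta$ and $\xi \in R_\delta^*$, the bound $|x\cdot\xi| \leq |x'|\,|\xi'| + |\phi(x')|\,|\xi_d| \lesssim c$ forces $\re(e^{-2\pi i x\cdot\xi}) \geq 1/2$ on the entire cap, so that $|\mathcal{F}(f\mu)(\xi)| \gtrsim \mu(C_\delta) \sim \delta^{(d-1)/2}$ uniformly on $R_\delta^*$. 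Since $|R_\delta^*| \sim \delta^{-(d+1)/2}$, this gives
\[ \|\mathcal{F}(f\mu)\|_{L^p(\lambda)} \gtrsim \delta^{(d-1)/2 - (d+1)/(2p)}. \]

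If the hypothetical inequality $\|\mathcal{F}(f\mu)\|_{L^p(\lambda)} \lesssim \|f\|_{L^q(\mu)}$ were to hold uniformly in $\delta$, letting $\delta \to 0$ would force $(d-1)/2 - (d+1)/(2p) \geq (d-1)/(2q)$, which rearranges to precisely $p \geq q(d+1)/((q-1)(d-1))$. Therefore the inequality fails for any $p$ strictly below this threshold. I do not anticipate any serious obstacle: the only slightly delicate point is extracting the two-sided estimate $\mu(C_\delta) \sim \delta^{(d-1)/2}$, but this follows immediately from continuity and positivity of the density near $x_0$. Everything else is a routine phase computation with no need for curvature, stationary phase, or randomness.
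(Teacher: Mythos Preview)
The paper does not give its own proof of this statement: it is quoted as a known result, attributed to Strichartz \cite{Strichartz77} for $q=2$ and to Demeter \cite{Demeter} for general $q$, and is included only to motivate the main problem. Your proposal reproduces the standard Knapp cap argument that underlies those references---localize at a point of positive density, flatten so that $S$ is a graph over its tangent plane, take $f=\mathbf{1}_{C_\delta}$ on a cap of tangential radius $\sqrt{\delta}$, and observe that $\mathcal{F}(f\mu)$ is essentially constant of size $\mu(C_\delta)\sim\delta^{(d-1)/2}$ on the dual box of volume $\sim\delta^{-(d+1)/2}$---and the resulting exponent balance is correct. So your approach is both correct and the same as the one the paper is implicitly invoking.
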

Note that if $a = b = d-1$ and $q = 2$, this theorem recovers the exponent $p_*(a, b, d)$ from Theorem \ref{frac ST}. Therefore, for the surface measure of a smooth hypersurface with non-vanishing Gaussian curvature, Theorem \ref{frac ST} is sharp. 

Greenleaf \cite[Remark 1.3]{Greenleaf} observed that Knapp's example can be generalized to apply to any smooth hypersurface $S$ given as the Cartesian product of a $k$-dimensional surface and $(d-1-k)$-dimensional Euclidean space. In particular, if the $k$-dimensional surface has non-zero Gaussian curvature and if $\mu$ is the surface measure on $S$, this modified Knapp example shows that $p_*(a, b, d)$ in Theorem \ref{frac ST} is optimal when $a=d-1$ and $b=k$.

A measure $\mu$ satisfying (A) can only satisfy (R) if $p \geq 2d/a$. 
For a simple proof of this result, see Mitsis \cite[Proposition 3.1]{mitsis-res}. 
This result does not tell us anything directly about Problem 1.2.
However, by combining it with Theorem \ref{frac ST}, it implies that if a measure satisfies (A) and (B), then $b \leq 2a$. 

Ryou \cite{Ryou-2023} builds on the $p \geq 2d/a$ result mentioned above 
using a Knapp-type example.     
In this way, Ryou establishes 
the following result for fractal subsets of the paraboloid (which appears as Theorem 1.3 in \cite{Ryou-2023}.)
\begin{thm}\label{ryou}
Let $1 \leq p, q \leq \infty.$ Let $\mathbb{P}^{d-1}$ denote the $(d-1)$-dimensional submanifold in $\mathbb{R}^d$ paramaterized by $(x, |x|^2) : x \in \mathbb{R}^{d-1}$, and assume $\mu$ is a Borel probability measure supported on a subset of $\mathbb{P}^{d-1}$ of Hausdorff dimension $a \in (0,d-1)$ and satisfying the estimate
\[\mu(B(x, r)) \lesssim r^{a}\]
for all $x \in \mathbb{P}^{d-1}$ and all $r > 0$. Then the estimate
\[\|\mathcal{F}(f\mu)\|_{L^p(\lambda)} \lesssim_{p,q} \|f\|_{L^q(\mu)} \text{ for all $f \in L^q(\mu)$}\]
cannot hold if $p < \frac{q(d + 1)}{(q - 1) a}$.
\end{thm}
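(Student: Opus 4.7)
The plan is to adapt Knapp's counterexample for the paraboloid to the fractal setting. For a base point $x_0 = (y_0, |y_0|^2) \in \mathbb{P}^{d-1}$ and small $\delta > 0$, I will take the test function $f_\delta = \mathbf{1}_{B(x_0, \sqrt{\delta})}$, so that $f_\delta \mu$ is supported in the paraboloid cap of ambient radius $\sqrt{\delta}$ around $x_0$. Writing $z = y - y_0$, a Taylor expansion gives
\[
\xi \cdot (x - x_0) = (\xi' + 2\xi_d y_0) \cdot z + \xi_d |z|^2,
\]
which is $O(1)$ on the tilted parallelepiped $R_\delta := \{\xi \in \mathbb{R}^d : |\xi' + 2\xi_d y_0| \lesssim \delta^{-1/2},\ |\xi_d| \lesssim \delta^{-1}\}$ of Lebesgue measure $\sim \delta^{-(d+1)/2}$. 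Hence $|\mathcal{F}(f_\delta \mu)(\xi)| \gtrsim \mu(B(x_0, \sqrt{\delta}))$ on $R_\delta$, yielding
\[
\|\mathcal{F}(f_\delta \mu)\|_{L^p(\lambda)} \gtrsim \mu(B(x_0, \sqrt{\delta})) \cdot \delta^{-(d+1)/(2p)}, \qquad \|f_\delta\|_{L^q(\mu)} = \mu(B(x_0, \sqrt{\delta}))^{1/q}.
\]

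The remaining step is to choose $x_0 = x_0(\delta)$ so that $\mu(B(x_0, \sqrt{\delta}))$ nearly saturates the Frostman upper bound $\lesssim \delta^{a/2}$. Fix $\epsilon > 0$. If there existed $r_0 > 0$ with $\mu(B(x, r)) \leq r^{a+\epsilon}$ for every $x$ and every $r \in (0, r_0)$, the mass distribution principle would force $\mathcal{H}^{a+\epsilon}(\supp \mu) > 0$, contradicting $\dim_H \supp \mu = a$. Hence for each $\epsilon > 0$ there is a sequence $\delta_n \to 0$ and points $x_n \in \supp \mu$ with $\mu(B(x_n, \sqrt{\delta_n})) \geq \delta_n^{(a+\epsilon)/2}$. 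By first restricting $\mu$ to a compact subset of $\mu$-mass at least $1/2$, I may assume $|y_n|$ is uniformly bounded, so the implicit constants in the Knapp estimate are uniform in $n$. Combining the two estimates gives
\[
\frac{\|\mathcal{F}(f_{\delta_n}\mu)\|_{L^p(\lambda)}}{\|f_{\delta_n}\|_{L^q(\mu)}} \gtrsim \delta_n^{(a+\epsilon)(q-1)/(2q)\, -\, (d+1)/(2p)}.
\]
If $p < q(d+1)/((q-1)a)$, choose $\epsilon > 0$ small enough that $p < q(d+1)/((q-1)(a+\epsilon))$ still holds; then the exponent above is strictly negative and the ratio blows up as $\delta_n \to 0$, so the restriction estimate fails at this $p$.

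The main obstacle is this second step: upgrading the Hausdorff-dimension hypothesis to a quantitative Frostman-type lower bound at arbitrarily small scales. The upper Frostman bound alone is consistent with $\mu$ concentrating very little mass in every small ball, so the mass distribution principle is needed to certify the existence of ``fat'' balls; the $\epsilon$-loss is unavoidable but harmless because the target range of $p$ is open. A secondary, minor technicality is that translating the base point $x_0$ off the vertex of the paraboloid introduces a linear tilt in the quadratic phase; this is absorbed cleanly into the skew coordinate $\xi' + 2\xi_d y_0$ without changing the volume of $R_\delta$.
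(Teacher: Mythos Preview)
The paper does not prove this theorem; it is quoted from \cite{Ryou-2023} as background, and the paper's only description of the method is that Ryou ``builds on the $p \geq 2d/a$ result \ldots\ using a Knapp-type example.'' Your proposal is exactly that combination: Knapp's cap construction on the paraboloid together with the mass distribution principle to locate, at arbitrarily small scales, balls on which $\mu$ nearly saturates its Frostman upper bound. The argument is correct, and it matches the approach indicated in the paper.

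One minor remark: the reduction to a compact subset so that $|y_n|$ stays bounded is not actually needed. From $|x-x_0|^2=|z|^2+(2y_0\cdot z+|z|^2)^2$ you get $|z|\leq\sqrt{\delta}$ regardless of $|y_0|$, and the shear $(\xi',\xi_d)\mapsto(\xi'+2\xi_d y_0,\xi_d)$ has unit Jacobian, so both the phase bound on $R_\delta$ and the volume $|R_\delta|\sim\delta^{-(d+1)/2}$ are already uniform in $y_0$. The remaining ingredients---using the mass distribution principle to contradict $\dim_H(\supp\mu)\leq a$ and exploiting the openness of the range $p<q(d+1)/((q-1)a)$ to absorb the $\epsilon$-loss---are precisely the right way to convert the dimension hypothesis into the required quantitative lower bound.
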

Note that this result does not tell us about whether $p_*(a,b,d)$ is optimal in Theorem \ref{frac ST}. However, by combining Theorem \ref{frac ST} and Theorem \ref{ryou} when $q=2$, it follows that a measure $\mu$ that satisfies the hypothesis of Theorem \ref{ryou} can only satisfy (B) with $b \leq 2a(d - a) / (1+d-a)$.    

Restriction problems have also been considered for manifolds of codimension larger than $1$. A classic example is the moment curve $\Gamma \subseteq  \mathbb{R}^d$ parameterized by $(x, x^2, x^3, \ldots, x^d)$. The optimal extension estimates for this curve were obtained by Drury \cite{Drury}:
\begin{thm}\label{drury}
Let $\mu$ denote the arc-length measure on the moment curve $\Gamma$. Then the estimate 
\[\|\mathcal{F}(f\mu)\|_{L^p(\lambda)} \lesssim_p \|f\|_{L^q(\mu)} \text{ for all $f \in L^q(\mu)$}\]
holds if $p > \frac{d(d+1)}{2} + 1$ and $q = \frac{2p}{2p - d(d+1)}$.
\end{thm}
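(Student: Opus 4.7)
The plan is to follow Drury's classical argument, which combines duality, the Vandermonde Jacobian for the sum map on the moment curve, and a weighted estimate closed by analytic interpolation.

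First, I would parameterize $\Gamma$ by $\gamma(t)=(t,t^2,\ldots,t^d)$, so $\mathcal{F}(f\mu)(\xi)=\int f(t)\,e^{-2\pi i\xi\cdot\gamma(t)}\,dt$. Since $p>d(d+1)/2+1 \geq 2d$ for $d\geq 2$, Hausdorff--Young applied to $(\mathcal{F}(f\mu))^d=\mathcal{F}((f\mu)^{*d})$ yields
\[\|\mathcal{F}(f\mu)\|_{L^p}^d = \|\mathcal{F}((f\mu)^{*d})\|_{L^{p/d}} \leq \|(f\mu)^{*d}\|_{L^r(\lambda)},\qquad r=(p/d)'.\]
It therefore suffices to prove the $d$-fold convolution bound $\|(f\mu)^{*d}\|_{L^r(\lambda)} \lesssim \|f\|_{L^q(\mu)}^d$.

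Next, a direct computation shows that the Jacobian of the sum map $S(t_1,\ldots,t_d)=\gamma(t_1)+\cdots+\gamma(t_d)$ equals $d!\,V(t)$, where $V(t)=\prod_{i<j}(t_j-t_i)$ is the Vandermonde determinant. On the open simplex $\{t_1<\cdots<t_d\}$, $S$ is a bijection onto its image, so the pushforward yields
\[(f\mu)^{*d}(y) = d!\,\frac{\prod_i f(t_i(y))}{|V(t(y))|},\]
where $t(y)$ is the unique ordered preimage of $y$. Substituting this into the target bound and changing variables back via $dy=V(t)\,dt$ converts the estimate into the weighted multilinear inequality
\[\int_{\mathbb{R}^d}\prod_{i=1}^d |f(t_i)|^r\,|V(t)|^{1-r}\,dt \lesssim \|f\|_{L^q(\mu)}^{dr}.\]
The scaling identity $1/q'=d(d+1)/(2p)$, equivalent to $q=2p/(2p-d(d+1))$, guarantees that both sides have the same homogeneity in $f$.

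The hardest step is closing this weighted estimate at the sharp exponent. A direct multilinear H\"older approach using the Selberg-type integral $\int_{[0,1]^d}|V(t)|^{2\gamma}\,dt<\infty$ (finite iff $\gamma>-1/d$) only yields the range $p>d(d+2)/2$, which is strictly weaker than the sharp threshold $p>d(d+1)/2+1$ once $d\geq 3$. To recover the sharp exponent, I would introduce an analytic family of operators by inserting a weight of the form $|V(t)|^z$, establish boundary estimates at $\re z=0$ via Plancherel and at a second vertical line via a near-trivial $L^\infty$ bound, and invoke Stein's analytic interpolation theorem. This analytic interpolation carries the bulk of the technical work, and it is here that the sharp threshold $d(d+1)/2+1$ ultimately emerges.
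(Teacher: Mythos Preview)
The paper does not prove Theorem~\ref{drury}. It appears in the motivation subsection as a citation of Drury's result \cite{Drury}, with no proof or sketch; the paper then moves on to discuss the sharpness endpoint via \cite{ACK}. There is therefore nothing in the paper to compare your proposal against.

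For what it is worth, your outline captures the standard skeleton of the argument: raise to the $d$-th power, apply Hausdorff--Young to reduce to an $L^r$ bound on the $d$-fold convolution $(f\mu)^{*d}$, and exploit that the sum map $S(t)=\gamma(t_1)+\cdots+\gamma(t_d)$ has Jacobian $d!\,V(t)$. Two small remarks. First, logically the Vandermonde computation must come before the Hausdorff--Young step, since you need to know $(f\mu)^{*d}$ has an $L^r$ density in the first place. Second, Drury's original paper closes the weighted inequality by an iterative (offspring-curve) argument rather than Stein interpolation; analytic-interpolation proofs came later. But none of this bears on the present paper, which treats the theorem as a black box.
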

The endpoint $d(d+1)/2 + 1$ is known to be sharp; an argument of Arhipov, Cubarikov, and Karacuba \cite{ACK} shows that Theorem \ref{drury} is false if $p = \frac{d(d+1)}{2} + 1$. 
However, this argument does not tell us whether $p_*(a,b,d)$ is optimal in Theorem \ref{frac ST}. 
Indeed, when we consider the arc-length measure on the moment curve, $a=1$, $b=1/d$, and $p_*(a,b,d) = 4d(d-1)+2$, which is strictly larger than $d(d+1)/2 + 1$. 

We now turn to past results on Problem \ref{prob} when $a$ and $b$ are not necessarily integers. 
Hambrook and {\L}aba \cite{HL2013} (see also Hambrook \cite{hambrook-thesis}) proved the optimality of $p_*(a, b, d)$ in Theorem \ref{frac ST} when $d=1$ and $0 < b \leq a \leq d$. 
Since there is no notion of curvature in one dimension, the construction takes a form completely different from Knapp's. The authors modified a construction of {\L}aba and Pramanik \cite{LP} to construct a random Cantor set containing 
a small deterministic Cantor set determined by 
a multi-scale arithmetic progression;  
more details can be found in Subsection \ref{idea} below.  
The construction was improved in technical ways by Chen \cite{chen}. 
In \cite{HL2016}, Hambrook and {\L}aba proved the optimality of $p_*(a,b,d)$ for $d \geq 2$ and $d-1 \leq b \leq a \leq d$. 
The construction combines the examples of Knapp and \cite{HL2013}.

Seeger \cite{seeger-private} asked whether it is possible to prove optimality of Theorem \ref{frac ST} when $d=1$ with a \textit{deterministic} construction. 
In particular, Seeger wondered whether Theorem \ref{frac ST} is optimal for measures on the following sets:
$$
E(\alpha) = \cbr{x \in \RR : |x-r/q| \leq |q|^{-(2+\alpha)} \forinfmany (q,r) \in \ZZ \times \ZZ },
$$
where $\alpha > 0$. The set $E(\alpha)$ is called the set of $\alpha$-well-approximable numbers. 
These sets arise from number theory (in particular, Diophantine approximation). 
They are important in harmonic analysis because they are the only known \textit{deterministic} examples of so-called Salem sets in $\RR$. 

A set in $\RR^d$ is called a Salem set if it supports a probability measure $\mu$ that satisfies (B) for every value of $b$ less than the Hausdorff dimension of the set. 
There are many random constructions of Salem sets in $\RR^d$ (see \cite{salem}, \cite{kahane-book}, \cite{kahane-1966-fourier}, \cite{kahane-1966-brownian}, \cite{bluhm-1}, \cite{chen-seeger}, \cite{ekstrom}, \cite{LP}, \cite{shmerkin-suomala}). 
Trivial examples of deterministic Salem sets in $\RR^d$ are points (Hausdorff dimension 0), spheres (Hausdorff dimension $d-1$), and balls (Hausdorff dimension $d$). 
Kaufman \cite{Kaufman} proved that $E(\alpha)$ is a Salem set of dimension $2/(2+\alpha)$ when $\alpha > 0$.
The sets $E(\alpha)$ are the only known deterministic examples of Salem sets in $\RR$ of dimension other than 0 or 1.

\subsection{Main Result}

In the present paper, 
we prove that $p_*(a,b,d)$ is optimal for $d=1$, $0 < a,b < d$ and $b \leq 2a$ with a completely \textit{deterministic} construction. 
As we mentioned above, $b>2a$ cannot happen. Thus we cover all possible $a$ and $b$ when $d=1$. Recall that previous work only covered the case $b \leq a$. We also resolve the question of Seeger mentioned above. Our precise result is contained in the following two theorems.

\begin{thm}\label{main-result}
Let $\alpha > 0$ and $0 \leq \beta < 1$. There exists a Borel probability measure $\mu$ such that the following hold: 
\begin{itemize}
\item The support of $\mu$ is contained in $[-1/2, 1/2] \cap E(\alpha)$. 
\item For any $\epsilon >0$, 
\begin{equation}\label{MT_reg_mu}
\mu(I) \lesssim_\epsilon |I|^{\frac{2}{2+\alpha}-\epsilon} \quad \text{for every interval $I \subseteq \mathbb{R}$}. 
\end{equation}
\item For any $\epsilon >0$, 
\begin{equation}\label{MT_FD_mu}
|\mathcal{F}(\mu)(\xi)| \lesssim_\epsilon (1+|\xi|)^{-\frac{1-\beta}{2+\alpha}+\epsilon} \quad \text{for every $\xi \in \RR$}. 
\end{equation}
\item There exists a sequence of nonnegative functions $\{f_k\}_{k \in \bN}$ such that
\begin{equation}\label{LqLpest2}
\sup_{k \geq 1} \frac{\norm{\mathcal{{F}}(f_k \mu)}_{L^p(\lambda)}}{\norm{f_k}_{L^q(\mu)}} =\infty \qquad \text{whenever}  \ p< p_+(q) := \frac{q}{q-1}\left(\frac{1  + \alpha - \beta}{1-\beta}\right).
\end{equation}
\end{itemize}
\end{thm}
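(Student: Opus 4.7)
The plan is to build $\mu$ as a deterministic Cantor-type measure supported on $E(\alpha)$, replacing the randomness of the Hambrook–{\L}aba~\cite{HL2013} and Chen~\cite{chen} constructions by the intrinsic additive structure of Farey fractions. Fix a very rapidly-growing sequence of primes $Q_1 < Q_2 < \cdots$ (say $Q_{j+1} \geq Q_j^{K}$ with $K = K(\alpha,\beta,\epsilon)$ large), and at stage $j$ place an interval of length $Q_j^{-(2+\alpha)}$ around each fraction $r/Q_j$ where $r$ runs through a carefully-chosen subset $\mathcal{R}_j \subseteq \{1,\ldots,Q_j-1\}$. The parameter $\beta$ is implemented by sparsifying $\mathcal{R}_j$: we take it to be an arithmetic progression modulo $Q_j$ of size $\sim Q_j^{1-\beta}$ and common difference $\sim Q_j^{\beta}$. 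The arithmetic-progression structure is exactly what will generate the restriction counterexample, while its size controls the Fourier-dimension loss. The measure $\mu$ is defined as a weak-$*$ subsequential limit of normalized Lebesgue measures on the iterated Cantor sets.

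The first three bullets are then verified by scale-by-scale calculations. Membership of $\supp\mu$ in $E(\alpha)$ is immediate from the construction, since every point lies within $Q_j^{-(2+\alpha)}$ of some $r/Q_j$ for each $j$. The ball condition~\eqref{MT_reg_mu} is proved by interval counting: the Farey spacing $\gtrsim Q_j^{-2}$ between distinct $r/Q_j$ prevents overlaps, and, after summing geometric series through the scales intermediate between $Q_{j-1}^{-(2+\alpha)}$ and $Q_j^{-(2+\alpha)}$, a balancing of parameters yields $\mu(I)\lesssim_\epsilon |I|^{2/(2+\alpha)-\epsilon}$. The Fourier decay~\eqref{MT_FD_mu} uses Kaufman's method adapted to the sparsified setting: one writes $\widehat{\mu_j}(\xi)$ in terms of an exponential sum over $\mathcal{R}_j/Q_j$ and bounds this via Kloosterman/Weyl-sum estimates; the sparsification from $Q_j$ down to $Q_j^{1-\beta}$ residues costs a factor of $Q_j^{\beta/2}$, converting Kaufman's exponent $1/(2+\alpha)$ into $(1-\beta)/(2+\alpha)$.

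For the restriction counterexample~\eqref{LqLpest2}, let $A_k$ denote a single stage-$(k-1)$ cell and take $f_k = \mathbf{1}_{A_k}$. Then $f_k\mu$ is $\mu$ restricted to a length-$Q_{k-1}^{-(2+\alpha)}$ piece containing $\sim Q_k^{1-\beta}$ stage-$k$ intervals of length $Q_k^{-(2+\alpha)}$ arranged in an arithmetic progression of spacing $\sim Q_k^{\beta-1}$. Up to lower-order terms, $\mathcal{F}(f_k\mu)(\xi)$ factors as the Fourier transform of a single stage-$k$ interval times the discrete Fourier sum over the AP of offsets; the second factor is large---of size $\sim Q_k^{1-\beta}$---on the dual progression $\xi \in Q_k^{1-\beta}\ZZ$, while the first is comparable to its value at $\xi=0$ for $|\xi|\lesssim Q_k^{2+\alpha}$. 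Integrating $|\mathcal{F}(f_k\mu)|^p$ over this dual AP and dividing by $\|f_k\|_{L^q(\mu)}^p = \mu(A_k)^{p/q}$ gives a ratio $\gtrsim Q_k^{\eta(p,q,\alpha,\beta)}$ with $\eta>0$ precisely when $p<p_+(q)$, so sending $k\to\infty$ yields~\eqref{LqLpest2}.

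The main obstacle is coupling the three requirements in a single \emph{deterministic} construction: Kaufman's Fourier-decay argument asks for $\mathcal{R}_j$ to be equidistributed enough that Kloosterman/Weyl bounds apply, whereas the restriction failure demands sharp additive structure in $\mathcal{R}_j$. Without randomness to average away the non-arithmetic frequencies, the number-theoretic sum bounds must be quantitative enough to survive the sparsification deterministically, and the growth rate $Q_{j+1}\geq Q_j^K$ must be tuned so that the unavoidable scale-$j$ errors telescope into the $\epsilon$-losses in~\eqref{MT_reg_mu} and~\eqref{MT_FD_mu}. Balancing these competing number-theoretic constraints is the core technical difficulty.
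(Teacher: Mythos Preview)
There is a genuine gap in the Fourier-decay step. Your set $\mathcal{R}_j$ is an arithmetic progression modulo $Q_j$, and the exponential sum over an AP is a geometric series with an explicit closed form: it has absolute value comparable to $|\mathcal{R}_j|$ precisely on the dual progression $\{s: ds \equiv 0 \pmod{Q_j}\}$. There is no Kloosterman or Weyl structure here, and no square-root cancellation to invoke. Indeed, this large-on-a-dual-lattice behaviour is exactly what you (correctly) exploit in the last paragraph to produce the restriction counterexample---so the very feature that makes \eqref{LqLpest2} work forces $\widehat{\mu_j}$ to be large on a rich set of frequencies and kills \eqref{MT_FD_mu}. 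Kaufman's mechanism is different in kind: his cancellation comes from averaging over a \emph{range} of primes $p\in(M,2M)$, using that a given integer $s$ can be divisible by only $O(\log|s|/\log M)$ of them; a single prime $Q_j$, however you thin its residue set, cannot reproduce this.

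The paper resolves the tension you identify not by asking one set to do both jobs, but by running two constructions in parallel. At each stage one keeps the full Kaufman--Papadimitropoulos family $E(\alpha,k)$ (all reduced fractions with prime denominators in $(M_k,2M_k)$) to supply Fourier decay via the prime-averaging argument, and at odd stages one \emph{adjoins} a separate arithmetic-progression family $C(\beta,k)=\{a/B_k: |a|\leq (B_k-1)/2\}$ with $B_k\sim M_k^{1+\beta}$. The mass placed on $C(\beta,k)$ is calibrated so that its contribution to $\widehat{g_k}$ is of order $M_k^{\beta-1}$, which is exactly what downgrades the decay exponent from $1/(2+\alpha)$ to $(1-\beta)/(2+\alpha)$ without destroying it. The functions $f_k$ are then bump functions supported on the AP piece $C(\beta,k)$ (not indicator functions of a single previous-stage cell), and the lower bound on $\|\mathcal{F}(f_k\mu)\|_{L^p}$ comes from evaluating $\widehat{f_k\mu}$ at the multiples of $B_k$. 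The support still lies in $E(\alpha)$ because the even-stage sets $E(\alpha,2k)$ already force infinitely many good rational approximations.
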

\begin{thm}\label{main-result2}
Let $\alpha > 0$ and $-1 < \beta <0$. 
There exists a Borel probability measure $\mu$ such that the following hold: 
\begin{itemize}
\item The support of $\mu$ is contained in $[-1/2, 1/2] \cap E(\alpha)$. 
\item For any $\epsilon >0$, 
\begin{equation}\label{MT_reg_mu_2}
\mu(I) \lesssim_\epsilon |I|^{\frac{2+\beta}{2+\alpha}-\epsilon} \quad \text{for every interval $I \subseteq \mathbb{R}$}. 
\end{equation}
\item For any $\epsilon >0$, 
\begin{equation}\label{MT_FD_mu_2}
|\mathcal{F}(\mu)(\xi)| \lesssim_\epsilon (1+|\xi|)^{-\frac{1}{2+\alpha}+\epsilon} \quad \text{for every $\xi \in \RR$}. 
\end{equation}
\item There exists a sequence of nonnegative functions $\{f_k\}_{k \in \bN}$ such that
\begin{equation}\label{LqLpest2_2}
\sup_{k \geq 1} \frac{\norm{\mathcal{{F}}(f_k \mu)}_{L^p(\lambda)}}{\norm{f_k}_{L^q(\mu)}} =\infty \qquad \text{whenever}  \ p< p_-(q) := \frac{q}{q-1}\left(1+\alpha-\beta\right).
\end{equation}
\end{itemize}
\end{thm}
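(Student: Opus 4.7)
The plan is to build $\mu$ as a Cantor-type nested intersection of modified Kaufman densities in which, at each scale $Q_j^{-(2+\alpha)}$, only a sparse, arithmetically structured subset of the Farey rationals $\{r/q : q \in [Q_j, 2Q_j]\}$ is retained. This sparsification lowers the Hausdorff dimension from Kaufman's value $2/(2+\alpha)$ down to $(2+\beta)/(2+\alpha)$, while the retained arithmetic structure preserves Kaufman's Fourier decay rate $1/(2+\alpha)$. Test functions concentrated on a single denominator at one of these scales will then witness the sharp failure of restriction at the threshold $p_-(q)$. Since $-1 < \beta < 0$, this construction covers precisely the regime $a < b \leq 2a$ that was untouched by \cite{HL2013, chen}.

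Fix a rapidly increasing sequence $(Q_j)_{j \in \bN}$ of positive integers. For each $j$ and each $q \in [Q_j, 2Q_j]$, choose an arithmetic progression $R_j(q) \subseteq \{0, 1, \ldots, q-1\}$ of common difference $\sim q^{-\beta}$, so $|R_j(q)| \sim q^{1+\beta}$; let $\mathcal{R}_j = \{r/q : q \in [Q_j, 2Q_j],\ r \in R_j(q)\}$, of cardinality $\sim Q_j^{2+\beta}$. With a fixed smooth nonnegative bump $\psi$ of integral one supported in $[-1,1]$, put
\[
\mu_j(x) \;=\; \frac{1}{|\mathcal{R}_j|} \sum_{r/q \in \mathcal{R}_j} Q_j^{2+\alpha}\, \psi\!\bigl(Q_j^{2+\alpha}(x - r/q)\bigr),
\]
and let $\mu$ be the weak-$*$ limit along a subsequence of nested Cantor-style intersections of $\supp \mu_j$, normalized on $[-1/2, 1/2]$. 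The growth of $(Q_j)$ is chosen so that the stage-$(j+1)$ rationals always lie within a $Q_j^{-(2+\alpha)}$-neighborhood of a stage-$j$ rational, giving nesting and the inclusion $\supp \mu \subseteq E(\alpha)$.

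At scale $|I|\sim Q_j^{-(2+\alpha)}$, any such interval meets at most $O(1)$ of the $\sim Q_j^{2+\beta}$ bumps used at stage $j$, each of $\mu$-mass $\sim Q_j^{-(2+\beta)} = |I|^{(2+\beta)/(2+\alpha)}$, yielding \eqref{MT_reg_mu_2} after the usual absorption of logarithmic factors into the $\epsilon$. For \eqref{MT_FD_mu_2}, the Fourier transform of $\mu_j$ factors as $\widehat{\psi}(\xi/Q_j^{2+\alpha})$ times the normalized Weyl sum $|\mathcal{R}_j|^{-1}\sum_{r/q \in \mathcal{R}_j} e^{-2\pi i \xi r/q}$. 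Since each inner summation ranges over an arithmetic progression inside $\bZ/q\bZ$, a completion argument (Erd\H{o}s--Tur\'an or Vaaler) reduces it to Kaufman's original complete sum plus lower-order errors, giving $|\widehat{\mu_j}(\xi)| \lesssim_\epsilon (1+|\xi|)^{-1/(2+\alpha) + \epsilon}$, and rapid growth of $(Q_j)$ then transfers this bound to $\widehat{\mu}$.

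The failure of restriction is the heart of the proof. Fix $k$ and let $f_k$ be the indicator of the union, over $r \in R_k(Q_k)$, of the bumps of $\mu_k$ centered at $r/Q_k$; thus $f_k$ selects only the single denominator $q=Q_k$. Uniform distribution of $\mu$-mass across stage-$k$ bumps gives $\|f_k\|_{L^q(\mu)}^q = \mu(\supp f_k) \sim Q_k^{1+\beta}/Q_k^{2+\beta} = Q_k^{-1}$. On the Fourier side, $f_k\mu$ is supported on a $\sim Q_k^{-(1+\beta)}$-spaced arithmetic progression of $\sim Q_k^{1+\beta}$ clusters inside $[0,1]$, each of $\mu$-mass $\sim Q_k^{-(2+\beta)}$. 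Geometric summation of the complex exponentials produces coherent peaks of $|\widehat{f_k\mu}|$ at $\xi \in Q_k^{1+\beta}\bZ$, each of height $\sim Q_k^{-1}$ and width $\sim 1$, throughout the band $|\xi|\lesssim Q_k^{2+\alpha}$ where $\widehat{\psi}$ is essentially constant. Summing the $\sim Q_k^{1+\alpha-\beta}$ such peaks gives $\|\mathcal{F}(f_k\mu)\|_{L^p(\lambda)}^p \gtrsim Q_k^{1+\alpha-\beta}\cdot Q_k^{-p}$, hence
\[
\frac{\|\mathcal{F}(f_k\mu)\|_{L^p(\lambda)}^p}{\|f_k\|_{L^q(\mu)}^p} \;\gtrsim\; Q_k^{1+\alpha-\beta - p(q-1)/q},
\]
which diverges as $k\to\infty$ exactly when $p < p_-(q) = \tfrac{q}{q-1}(1+\alpha-\beta)$, giving \eqref{LqLpest2_2}. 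The main obstacle is the Fourier decay step: Kaufman's Weyl-sum estimate must be adapted to the sparse $\mathcal{R}_j$, and it is essential that the sparsification is by an arithmetic progression (rather than a generic sparse subset) so the completion argument preserves the sharp exponent $1/(2+\alpha)$. Calibrating the growth rate of $(Q_j)$ to ensure both the nesting property and the absorption of cross-stage errors is the secondary technical hurdle.
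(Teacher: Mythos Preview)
There is a genuine and fatal gap in the Fourier decay step. Your construction sparsifies Kaufman's set: at scale $Q_j^{-(2+\alpha)}$ the support of $\mu$ is covered by the $|\mathcal{R}_j|\sim Q_j^{2+\beta}$ intervals of $\supp\mu_j$, so the upper box dimension of $\supp\mu$ is at most $(2+\beta)/(2+\alpha)$. Since the Fourier dimension of a measure never exceeds the Hausdorff (hence upper box) dimension of its support, the best decay any measure on your set can satisfy is $|\widehat\mu(\xi)|\lesssim |\xi|^{-(2+\beta)/(2(2+\alpha))+\epsilon}$, which for $\beta<0$ is strictly weaker than the required $|\xi|^{-1/(2+\alpha)+\epsilon}$. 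In other words, \eqref{MT_FD_mu_2} is \emph{impossible} for your measure, regardless of how the completion argument is organised. Concretely, the ``lower-order errors'' in your Erd\H{o}s--Tur\'an step are not lower order: the Dirichlet kernel of a sparse arithmetic progression of length $L\sim q^{1+\beta}$ has many secondary peaks of height comparable to the main one, and these are exactly what force the weaker decay.

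The paper's construction avoids this obstruction by going in the opposite direction. It keeps the \emph{full} Kaufman density (all residues $r$ modulo each prime $p\in P_{M_k}$) so that the support retains Hausdorff dimension $2/(2+\alpha)$ and Kaufman's Fourier decay $1/(2+\alpha)$ survives. The smaller regularity exponent $(2+\beta)/(2+\alpha)$ is obtained not by thinning the support but by \emph{adding}, at odd stages, a short arithmetic progression $C(\beta,k)$ of only $B_k\sim M_k^{1+\beta}$ intervals carrying \emph{extra weight} (the factor $1+M_k^{-\beta}\sim M_k^{-\beta}$ in the definition of $g_k$). Each such interval then has $\mu$-mass $\sim M_k^{-(2+\beta)}$, which is what drives the ball condition down to the exponent $(2+\beta)/(2+\alpha)$; but the total mass on $C(\beta,k)$ is only $\sim M_k^{-1}$, so it perturbs the Fourier transform negligibly. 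The test functions $f_k$ are then bumps on $C(\beta,k)$, and the Knapp-type computation you sketched goes through essentially as you wrote it---but only because the underlying measure has the full Kaufman Fourier decay to begin with.
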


\begin{remark} Theorem \ref{main-result} and \ref{main-result2} imply the optimality of $p_{*}(a,b,d)$ in Theorem \ref{frac ST} when $0 < a,b < d=1$ and $b \leq 2a$. This is a consequence of the following argument. 
Let $p_0 < p_*(a,b,d)$ be given. 
Choose $\epsilon > 0$ such that $p_0 < p_*(a+\epsilon,b+\epsilon,d) < p_*(a,b,d)$.

If $b\leq a$, choose $\alpha > 0$ and $0\leq  \beta < 1$ such that 
\begin{equation*}
a + \epsilon = \frac{2}{2+\alpha}, \qquad b + \epsilon = \frac{2(1-\beta)}{2+\alpha}.
\end{equation*}
Set $q = 2$. Then 
\begin{equation*}
p_+(2) = 2\left(\frac{1+\alpha-\beta}{1-\beta} \right) = \frac{4-4a+2b-2\epsilon}{b+\epsilon} = p_{*}(a+\epsilon,b+\epsilon,d) . 
\end{equation*}

If $a< b \leq 2a$, choose $\alpha > 0$ and $-1 < \beta < 0$ such that 
\begin{equation*}
a + \epsilon = \frac{2+\beta}{2+\alpha}, \qquad b + \epsilon = \frac{2}{2+\alpha}.
\end{equation*}
Set $q = 2$. Then 
\begin{equation*}
p_-(2) = 2(1+\alpha-\beta) = \frac{4-4a+2b-2\epsilon}{b+\epsilon} = p_{*}(a+\epsilon,b+\epsilon,d) . 
\end{equation*}
\end{remark}

\begin{figure}
    \centering

\tikzset{every picture/.style={line width=0.75pt}} 
\begin{tikzpicture}[x=0.75pt,y=0.75pt,yscale=-1,xscale=1]

\draw  (201.33,228.3) -- (430.56,228.3)(224.26,22) -- (224.26,251.22) (423.56,223.3) -- (430.56,228.3) -- (423.56,233.3) (219.26,29) -- (224.26,22) -- (229.26,29)  ;
\draw  [dash pattern={on 4.5pt off 4.5pt}]  (321.22,48.16) -- (224.26,228.3) ;
\draw  [dash pattern={on 4.5pt off 4.5pt}]  (321.22,48.16) -- (411.22,48.16) ;
\draw  [dash pattern={on 4.5pt off 4.5pt}]  (411.22,48.16) -- (224.26,228.3) ;
\draw  [dash pattern={on 4.5pt off 4.5pt}]  (411.22,48.16) -- (411.22,228.16) ;
\draw    (338.56,119.49) -- (338.56,226.16) ;
\draw [shift={(338.56,228.16)}, rotate = 270] [color={rgb, 255:red, 0; green, 0; blue, 0 }  ][line width=0.75]    (10.93,-3.29) .. controls (6.95,-1.4) and (3.31,-0.3) .. (0,0) .. controls (3.31,0.3) and (6.95,1.4) .. (10.93,3.29)   ;
\draw [shift={(338.56,119.49)}, rotate = 90] [color={rgb, 255:red, 0; green, 0; blue, 0 }  ][fill={rgb, 255:red, 0; green, 0; blue, 0 }  ][line width=0.75]      (0, 0) circle [x radius= 3.35, y radius= 3.35]   ;
\draw    (338.56,119.49) -- (285.89,119.49) ;
\draw [shift={(283.89,119.49)}, rotate = 360] [color={rgb, 255:red, 0; green, 0; blue, 0 }  ][line width=0.75]    (10.93,-4.9) .. controls (6.95,-2.3) and (3.31,-0.67) .. (0,0) .. controls (3.31,0.67) and (6.95,2.3) .. (10.93,4.9)   ;

\draw (440.67,224.73) node [anchor=north west][inner sep=0.75pt]    {$a$};
\draw (202,21.44) node [anchor=north west][inner sep=0.75pt]    {$b$};
\draw (260,38.73) node [anchor=north west][inner sep=0.75pt]    {$b=2a$};
\draw (406,236.73) node [anchor=north west][inner sep=0.75pt]    {$1$};
\draw (420.67,38.73) node [anchor=north west][inner sep=0.75pt]    {$b=a$};
\draw (356,104.73) node [anchor=north west][inner sep=0.75pt]    {$\beta =0$};
\draw (320.67,236.73) node [anchor=north west][inner sep=0.75pt]    {$\beta \rightarrow 1$};
\draw (228.67,104.73) node [anchor=north west][inner sep=0.75pt]    {$\beta \rightarrow -1$};

\end{tikzpicture}

    \caption{If we ignore $\epsilon$ in the exponents, $(a,b) = (\frac{2}{2+\alpha}, \frac{2}{2+\alpha})$ when $\beta=0$. The point $(a,b) \rightarrow (\frac{2}{2+\alpha}, 0)$ as $\beta \rightarrow 1$ and $(a,b) \rightarrow (\frac{1}{2+\alpha}, \frac{2}{2+\alpha})$ as $\beta \rightarrow -1$.}
    \label{fig1}
\end{figure}
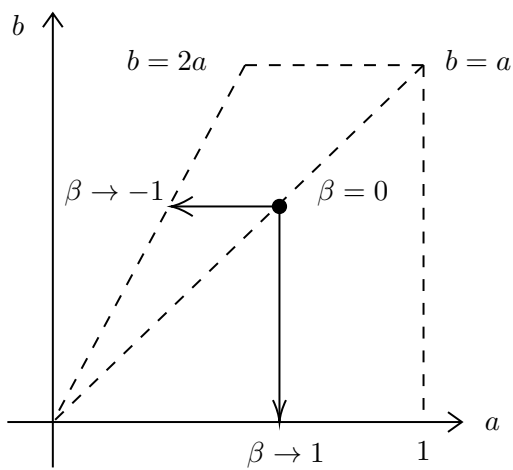

\subsection{Idea of the Proof of Theorem \ref{main-result} and Theorem \ref{main-result2}}\label{idea}

The proof of Theorem \ref{main-result} 
combines ideas from Kaufman \cite{Kaufman}, Papadimitropoulos \cite{pa-thesis}, and  Hambrook and {\L}aba \cite{HL2013}. We will first describe some key features of those works. Then will describe our proof of Theorem \ref{main-result} and \ref{main-result2}.

Kaufman \cite{Kaufman} constructs a measure on $E(\alpha)$ that obeys (B) with $b = 2/(2+\alpha)$ (here and elsewhere in this subsection, we ignore logarithmic factors and arbitrarily small exponent losses in (A) and (B)). However, as observed by Papadimitropoulos \cite{pa-thesis}, Kaufman's measure satisfies (A) with $a = 1/(2+\alpha)$ but not any larger value of $a$.  
Papadimitropoulos \cite{pa-thesis} modifies Kaufman's construction to obtain a measure that satisfies (A) and (B) with $a=b=2/(2+\alpha)$. 
Note that neither Kaufman nor Papadimitropoulos considered the inequality (R) for their measures. 
Note also that Kaufman's and Papadimitropoulos's measures are actually constructed on a Cantor set contained in $E(\alpha)$. As we describe below, this Cantor set plays an important role in our proof described below.

Hambrook and {\L}aba \cite{HL2013} start with a random Cantor set and then adjoin a deterministic Cantor set with arithmetic progression structure. 
More specifically, at each scale, they replace a small number of intervals of the random Cantor set with intervals of the same length but whose centers lie in a generalized arithmetic progression; these replacement intervals are the intervals of the structured deterministic Cantor set. 
The original random Cantor set is built so that its natural measure satisfies (A) and (B), for arbitrary prescribed values of $a$ and $b$ with $0 < b \leq a \leq 1$.
(In fact, Hambrook and {\L}aba \cite{HL2013} only obtained the case $a=b$.
Chen \cite{chen} and Hambrook \cite{hambrook-thesis} modified the construction to get $b \leq a$.) 
Because the number of intervals replaced is small enough, the natural measure on the new Cantor set (i.e., the Cantor set obtained by adjoining the deterministic Cantor set to the random one)  
satisfies (A) and (B) with the same values of $a$ and $b$. 
The measure on the new Cantor set is the measure 
that shows $p_*(a,b,d)$ is optimal for Theorem \ref{frac ST}. 
The functions $f_k$ which witness the failure of (R) are bump functions on the intervals in the $k$-th scale of the deterministic Cantor set. 
The arithmetic progression structure of the deterministic Cantor set is the key to obtaining the failure of (R).

For the proof of Theorem \ref{main-result} and Theorem \ref{main-result2}, 
we cannot simply \textit{add} an arithmetic progression-structured Cantor set to $E(\alpha)$ because we need the support of the measure $\mu$ to be inside $E(\alpha)$. 
Instead, we must \textit{identify} an appropriately structured set that is contained in $E(\alpha)$. 
We start with a Cantor set $\bigcap_{k \in \NN} E(\alpha,k)$ contained in $E(\alpha)$;  
this is essentially the same Cantor set that Kaufman's (and Papadimitropoulos's) measure was constructed on. The precise definition of $E(\alpha,k)$ is given in Section \ref{sec_measure}. For now, it is enough to know that $E(\alpha,k)$ is a union of intervals of length $M_k^{-(2+\alpha)}$, where $M_k$ is a rapidly increasing sequence of positive numbers. At the $k$-th scale, if $k$ is odd, we adjoin to $E(\alpha,k)$ a set $C(\beta,k)$  
which consists of a small number of intervals of length $ M_k^{-(2+\alpha)}$ 
whose centers form an arithmetic progression. 
(The precise definition of $C(\beta,k)$ is given in Section \ref{sec_measure}.)
For even $k$, we do not adjoin anything to $E(\alpha,k)$. 
We obtain a Cantor set $$\bigcap_{k \text{ even}} E(\alpha,k) \cap \bigcap_{k \text{ odd}} (E(\alpha,k) \cup C(\beta,k)).$$  
Because of the $\limsup$ structure of $E(\alpha)$, this Cantor set is contained in $E(\alpha)$. 
To construct our measure $\mu$ on 
this Cantor set, we adapt Papadimitropoulos's modification of Kaufman's construction. The number of intervals in $C(\beta,k)$ is chosen small enough so that the measure on the new Cantor 
set still satisfies (A) and (B).
(Notably, our proof of (A) differs substantially from that of Papadimitropoulos in that we have used a Fourier analytic argument whereas Papadimitropoulos uses a purely spatial argument.) 
The functions $f_k$ which witness the failure of (R) are bump functions on the intervals in $C(\beta,k)$. 
The parameter $\beta$ controls both the number of and the relative mass on the intervals 
in $C(\beta,k)$. 
When $\beta = 0$, the construction will produce a measure $\mu$ that satisfies (A) and (B) with $a=b=2/(2+\alpha)$ and fails (R) for the appropriate value of $p$. 
The $\beta = 0$ case is the closest to Papadimitropoulos and Kaufman's constructions (which also obtained $a=b=2/(2+\alpha)$). 
To get the case $a \neq b$, we take $\beta \neq 0$. 
When $\beta > 0$, the effect is (mostly) to increase the number of intervals at each scale of $C(\beta)$, which lowers the Fourier decay exponent $b$. 
When $\beta < 0$, the dominant effect is an increase of the relative mass on the intervals at each scale of $C(\beta)$, which decreases the regularity exponent $a$. 

\subsection{Structure of Remainder of Paper}

The rest of the paper is devoted to the proof of Theorem \ref{main-result} and Theorem \ref{main-result2}. Section \ref{secconv} states and proves the Convolution Stability Lemma, which is a fundamental tool in the verification of the various properties of the measure $\mu$. Section \ref{sec_measure} constructs the measure $\mu$, proves that it has support contained in $[-1/2,1/2] \cap E(\alpha)$, 
and proves that it satisfies the required regularity and Fourier decay estimates on $\mu$ (namely, \eqref{MT_reg_mu} and \eqref{MT_FD_mu} if $0 \leq \beta < 1$ and \eqref{MT_reg_mu_2} and \eqref{MT_FD_mu_2} if  $-1 < \beta < 0$). 
Section \ref{failure_sec} constructs the sequence of functions $f_k$ and 
verifies the failure of the inequality (R) for that sequence by proving \eqref{LqLpest2} if $0 \leq \beta < 1$ and \eqref{LqLpest2_2} if $-1 < \beta < 0$. 

\subsection{Acknowledgements} 

We thank Andreas Seeger for posing his problem to us and hence inspiring this paper. We thank the anonymous referee for their helpful comments and suggestions which improved the presentation of this paper. 

\section{Convolution Stability Lemma}\label{secconv}

In this section, we state and prove the Convolution Stability Lemma (Lemma \ref{convstab}). 
Before that, we present some discussion of it.

In the course of the proof of Theorem \ref{main-result} and \ref{main-result2}, we will construct a sequence of functions $g_k$ with Fourier transforms that are large at well-separated scales. We will obtain the measure $\mu$ as a weak limit of measures $\mu_k$ with densities given by
\[F_0(x) g_1(x) \ldots g_k(x),\]
where $F_0$ is a suitable cutoff function to be described later.
We will show that $\widehat{\mu}_{k-1}(s)$ is small unless $|s|$ lies below a certain threshold, and that $\widehat{g}_k(s)$ is equal to zero unless $|s|$ is larger than some other threshold. If these thresholds are sufficiently well-separated, there will be very little interference when we multiply $\mu_{k-1}(x)$ and $g_k(x)$ to arrive at $\mu_k(x)$. 
The Convolution Stability Lemma quantifies this. 

We will use the Convolution Stability Lemma to prove the 
required regularity and Fourier decay estimates on $\mu$ and to prove the failure of the inequality (R). 


Kaufman \cite{Kaufman} uses a lemma (the only lemma in that paper) that is similar to the Convolution Stability Lemma to establish the Fourier decay estimate on the measure constructed in that paper. 
The works \cite{bluhm-1}, \cite{pa-thesis}, \cite[Ch.9]{wolff-book}, \cite{hambrook-tams}, and \cite{hambrook-fraser-Rn} do likewise with similar lemmas.  
(Note that Papadimitropoulos \cite{pa-thesis} proves both a Fourier decay and regularity estimate on a measure. The Fourier decay estimate is established using a lemma similar to Kaufman's, but the regularity estimate is proved using a counting argument.) 
The Convolution Stability Lemma in the present paper differs from these other lemmas in 
that it gives different estimates at different frequency scales. 
This furnishes a more precise understanding of behaviour of the measures $\mu_k$ at different frequency scales and was helpful in formulating our proofs. 
Note that \cite{fraser-hambrook-p-adic} and \cite{fraser-wheeler} previously used versions of the Convolution Stability Lemma of the type used in the present paper (rather than the type used in Kaufman's paper).

\begin{lem}[Convolution Stability Lemma]\label{convstab}
Let $\alpha > 0$. For $-1 < \beta < 1$, define 
\begin{equation*}
    N(\beta) = 99 \max(1, (1+\beta)^{-1}).   
\end{equation*}
Let $M_{k-1}$ and $M_k$ be real numbers satisfying 
\begin{equation*}
\min \left(M_{k-1}, M_{k-1}^{1+\beta}\right) \geq  100, 
\end{equation*}
\begin{equation}\label{Mcond1}
M_k \geq   4^{N(\beta)+1} M_{k-1}^{(2+\alpha)(N(\beta)+1)} (1 + M_{k-1}^{\beta}),
\end{equation}
\begin{equation}\label{Mcond2}
\log(M_k) \geq N(\beta) M_{k-1}^{3(2+\alpha)}.
\end{equation}
Let $G,H:\bZ \to \bC$ be such that $|G| \leq 1$, $G(0) = 1$, and $|H| \leq 2$. 
Assume $G$ and $H$ satisfy the following estimates: 
\begin{equation}\label{Fcond1}\tag{G1}
G(s)=0 \qquad \text{if} \ 1 \leq |s| < \min (M_k, M_k^{1+\beta}); 
\end{equation}
there is a constant $C > 0$ such that 
\begin{equation}\label{Fcond2}\tag{G2}
|G(s)| \leq C \log(|s|) M_k^{-1}(1+M_k^\beta) \qquad \text{if} \ |s| \geq \min (M_k, M_k^{1+\beta});  
\end{equation}
for each integer $N \geq 1$ there is a constant $C_{1,N} > 0$ such that 
\begin{equation}\label{Fcond3}\tag{G3}
|G(s)| \leq C_{1,N} \log(|s|) M_k^{-1+(2+\alpha)N}(1+M_k^\beta) |s|^{-N} \qquad \text{if} \ |s| \geq  M_k^{2+\alpha};
\end{equation}
for each integer $N \geq 1$ there is a constant $C_{2,N} > 0$ such that 
\begin{equation}\label{Gcond1}\tag{H}
|H(s)| \leq  C_{2,N} \log(|s|) M_{k-1}^{-1+(2+\alpha)(N+1)}(1+M_{k-1}^\beta) |s|^{-N} \qquad \text{if} \ |s| \geq 2M_{k-1}^{2+\alpha }. 
\end{equation}
Then the following estimates hold: 
\begin{equation}
\label{convlemres1}\tag{GH1}
|G \ast H(s) -H(s)| < C_{2,N(\beta)} M_k^{-96} \qquad \text{if} \ |s| \leq  \frac{1}{2} \min (M_k, M_k^{1+\beta}); 
\end{equation}
\begin{equation}
\label{convlemres2}\tag{GH2}
|G\ast H(s)| \leq (C+1)( C_{2,N(\beta)}+1) \log^2(|s|) M_k^{-1} (1+M_k^\beta) \qquad \text{if} \ |s| \geq  \frac{1}{2}\min (M_k, M_k^{1+\beta}); 
\end{equation}
\begin{align}
\label{convlemres3}\tag{GH3}
&|G\ast H(s)| \leq  (4^{N+1}C_{1,N+1}+C_{2,N+1})\log(|s|) M_k^{-1+(2+\alpha)(N+1)}(1+M_k^\beta) |s|^{-N} & \\ 
\notag 
&\hspace{0.6\textwidth}
\text{if} \ |s| \geq 2M_k^{2+\alpha}. &
\end{align}

\end{lem}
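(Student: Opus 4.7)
Expand $G \ast H(s) = \sum_{t \in \mathbb{Z}} G(t)\, H(s - t)$ and partition the sum according to whether $|t|$ is below, between, or above the thresholds $\min(M_k, M_k^{1+\beta})$ and $M_k^{2+\alpha}$. In each regime apply the sharpest of the bounds $|G| \leq 1$, (G1), (G2), (G3) on $G$ and of $|H| \leq 2$, (H) on $H$. The growth conditions (Mcond1), (Mcond2) and the choice of $N(\beta)$ are designed so that whenever (H) is invoked the argument $s - t$ exceeds $2M_{k-1}^{2+\alpha}$, and so that the tail sums in $|t|^{-N}$ together with the accumulated logarithmic factors are absorbed into the advertised bounds.

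\textbf{Estimate (GH1).} For $|s| \leq \tfrac{1}{2}\min(M_k, M_k^{1+\beta})$ isolate the $t = 0$ term, which equals $H(s)$ since $G(0) = 1$. The remainder is
\[
G \ast H(s) - H(s) = \sum_{t \neq 0} G(t)\, H(s - t),
\]
in which (G1) kills every term with $1 \leq |t| < \min(M_k, M_k^{1+\beta})$. For $|t| \geq \min(M_k, M_k^{1+\beta})$ the triangle inequality gives $|s - t| \geq |t|/2$, which by (Mcond1) exceeds $2M_{k-1}^{2+\alpha}$, so (H) with $N = N(\beta)$ controls $H(s-t)$; bound $|G(t)|$ by $1$. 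Summing produces a tail of order $\min(M_k, M_k^{1+\beta})^{-(N(\beta) - 1)}$ times polynomial-in-$M_{k-1}$ and logarithmic factors, and the explicit size of $N(\beta)$ together with (Mcond2) forces this below $C_{2, N(\beta)} M_k^{-96}$.

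\textbf{Estimates (GH2) and (GH3).} Write
\[
G \ast H(s) = S_{\mathrm{near}}(s) + S_{\mathrm{far}}(s), \quad S_{\mathrm{near}} = \sum_{|t| \leq |s|/2} G(t)H(s-t), \quad S_{\mathrm{far}} = \sum_{|t| > |s|/2} G(t)H(s-t).
\]
On $S_{\mathrm{near}}$, $|s-t| \geq |s|/2$ exceeds $2M_{k-1}^{2+\alpha}$ by (Mcond1), so (H) governs $H(s-t)$; partition the $t$-sum at $\min(M_k, M_k^{1+\beta})$ and at $M_k^{2+\alpha}$ and bound $G$ by $1$, (G2), or (G3) on the three pieces. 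On $S_{\mathrm{far}}$, $|t| > |s|/2$ is likewise above these thresholds, so (G2) or (G3) controls $G(t)$, while $H$ is handled by $|H| \leq 2$ on the bounded region $|s-t| < 2M_{k-1}^{2+\alpha}$ and by (H) outside it. In the regime of (GH2) the prefactor $M_k^{-1}(1+M_k^\beta)$ emerges from (G2), each of the two pieces contributes a $\log|s|$, and the constants add to $(C+1)(C_{2,N(\beta)}+1)$. For (GH3) one invokes (G3) with parameter $N+1$ on $S_{\mathrm{far}}$ and (H) with parameter $N+1$ on $S_{\mathrm{near}}$, and trades $|t|$ and $|s-t|$ against $|s|$ via $|s-t|, |t| \leq 2|s|$ to get the stated bound with constant $4^{N+1} C_{1, N+1} + C_{2, N+1}$.

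\textbf{Main obstacle.} The difficulty is one of orchestration rather than of ingenuity: three thresholds, the triangle-inequality relations among $|t|$, $|s-t|$, $|s|$, and the accumulating logarithmic and constant factors must be tracked simultaneously, and each hypothesis must be checked to apply in its intended range. The key calibration is the definition $N(\beta) = 99\max(1,(1+\beta)^{-1})$: it is the smallest $N$ for which $\sum_{|t| \geq \min(M_k, M_k^{1+\beta})} |t|^{-N}$ produces a power of $M_k$ as small as $M_k^{-96}$ uniformly in the sign of $\beta$, once the polynomial-in-$M_{k-1}$ losses are absorbed using (Mcond2).
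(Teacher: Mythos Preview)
Your proposal is correct and follows essentially the same approach as the paper: split the convolution sum at the natural thresholds and apply the sharpest available bound on each factor in each regime, using (Mcond1) and (Mcond2) to absorb the $M_{k-1}$-polynomial and logarithmic debris. The only cosmetic difference is in (GH2): the paper splits the sum directly on the argument of $H$ at the threshold $2M_{k-1}^{2+\alpha}$ (which cleanly separates where $|H|\le 2$ suffices from where (H) is needed), whereas you first use the $|t|\lessgtr |s|/2$ split from (GH3) and then sub-split $S_{\mathrm{far}}$ on $|s-t|$---the same bookkeeping in a slightly different order.
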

\begin{remark}
The indices on $M_{k-1}$ and $M_k$ in Lemma \ref{convstab} have no meaning at this point. We have used the notation because Lemma \ref{convstab} will be used iteratively later.  
In particular, note that the estimate \eqref{convlemres3} is of the same form as the estimate \eqref{Gcond1} with $k-1$ replaced by $k$. 
\end{remark}

We will prove the estimates on $G \ast H$ separately.

\begin{proof}[Proof of \eqref{convlemres1}]
Assume $|s| \leq \min (M_k, M_k^{1+\beta})/2$. We use \eqref{Fcond1}, that $G(0) = 1$, and that $|G| \leq 1$ to obtain 
\begin{equation*}
|G \ast H(s) - H(s)| = \sum_{|t| \geq \min (M_k, M_k^{1+\beta})} |G(t)H(s-t)|\leq \sum_{|t| \geq \min (M_k, M_k^{1+\beta})} |H(s-t)|.
\end{equation*}
Since $|t| \geq \min (M_k, M_k^{1+\beta})$ and $|s| \leq \min (M_k, M_k^{1+\beta})/2$, $|s-t| \geq |t|-|s| \geq \min (M_k, M_k^{1+\beta})/2$. By \eqref{Mcond1}, $\min (M_k, M_k^{1+\beta})/2 \geq 2M_{k-1}^{2+\alpha}$, and we can use \eqref{Gcond1} with $N=N(\beta)$. Therefore,
\begin{equation*}
\begin{split}
|G\ast H(s)-H(s)| &\leq \sum_{|u| \geq \min (M_k, M_k^{1+\beta})/2} |H(u)| \\
& \leq C_{2,N(\beta)}  M_{k-1}^{-1+(2+\alpha)(N(\beta)+1)}(1+M_{k-1}^\beta) \sum_{|u| \geq \min (M_k, M_k^{1+\beta})/2}\log(|u|)|u|^{-N(\beta)}\\
&\leq 2 \cdot C_{2,N(\beta)} \log(M_k)M_{k-1}^{-1+(2+\alpha)(N(\beta)+1)}(1+M_{k-1}^\beta) 4^{N(\beta)} M_k^{-98}   \\
&\leq C_{2,N(\beta)} M_k^{-96}.
\end{split} 
\end{equation*} 
In the last inequality, we used \eqref{Mcond1} and that $\log(|s|) \leq \log(M_k)\leq M_k$.
\end{proof}

\begin{proof}[Proof of \eqref{convlemres2}]
Assume $|s| \geq \min(M_k, M_k^{1+\beta})/2$. We write 
\begin{equation*}
|G \ast H| = \sum_{|t| < 2M_{k-1}^{2+\alpha}} |G(s-t)H(t)| + \sum_{|t| \geq 2M_{k-1}^{2+\alpha}} |G(s-t)H(t)|.
\end{equation*}
When $|t| < 2M_{k-1}^{2+\alpha}$, the assumptions $|s| \geq \min(M_k, M_k^{1+\beta})/2$ and \eqref{Mcond1} imply that $s-t \neq 0$ and $|s - t| \leq 2 |s|$.
Using \eqref{Fcond1}, \eqref{Fcond2} and the assumption that $|H| \leq 2$, we get
\begin{equation*}
\sum_{|t| < 2M_{k-1}^{2+\alpha}} |G(s-t)H(t)| \leq 2 C \log(2 |s|) M_k^{-1}(1+M_k^\beta) (4 M_{k-1}^{2+\alpha} + 1). 
\end{equation*}
By \eqref{Mcond2}, 
\begin{equation}\label{F*Gtsmall}
\sum_{|t| < 2M_{k-1}^{2+\alpha}} |G(s-t)H(t)| \leq C \log^2(|s|) M_k^{-1}(1+M_k^\beta).
\end{equation}
When $|t| \geq 2M_{k-1}^{2+\alpha}$, we consider the cases $t=s$ and $t \neq s$ separately. If $t=s$, we use $G(0) = 1$ and \eqref{Gcond1} with $N=N(\beta)$ to get
\begin{equation*}
\begin{split}
|G(0)H(s)| &\leq C_{2,N(\beta)}\log(|s|) M_{k-1}^{-1+(2+\alpha)(N(\beta)+1)}(1+M_{k-1}^\beta) |s|^{-N(\beta)}\\
&\leq C_{2,N(\beta)} 2^{N(\beta)}  \log(|s|) M_{k-1}^{-1+(2+\alpha)(N(\beta)+1)} (1+M_{k-1}^\beta) M_k^{-99}.
\end{split}
\end{equation*}
Using \eqref{Mcond1}, we obtain
\begin{equation}\label{F0Gs}
|G(0)H(s)| \leq C_{2,N(\beta)}\log(|s|) M_k^{-98}.
\end{equation}
If $t \neq s$, we use \eqref{Fcond1}, \eqref{Fcond2}, and \eqref{Gcond1} with $N=N(\beta)$ to get 
\begin{equation*}
\begin{split}    
\sum_{\substack{|t| \geq 2 M_{k-1}^{2+\alpha} \\ t \neq s}} |G(s-t)H(t)| \leq& C \cdot C_{2,N(\beta)}M_k^{-1}(1+M_k^\beta) M_{k-1}^{-1+(2+\alpha)(N(\beta)+1)} (1+M_{k-1}^\beta) \\
&\times \sum_{\substack{|t| \geq 2 M_{k-1}^{2+\alpha} \\ t \neq s}} \log(|s-t|) \log (|t|)|t|^{-N(\beta)}.
\end{split}
\end{equation*}
Since $|t| \geq 2M_{k-1}^{2+\alpha}$, $\log^2(|t|) \leq |t|$. Thus, we have
\begin{equation*}
    \begin{split}
        \sum_{\substack{|t| \geq 2 M_{k-1}^{2+\alpha} \\ t \neq s}} \log(|s-t|) \log(|t|) |t|^{-N(\beta)}&\leq \log(|s|) \sum_{|t| \geq 2M_{k-1}^{2+\alpha}} \log^2(|t|)|t|^{-N(\beta)}\\
        &\leq \log(|s|) \sum_{|t| \geq 2M_{k-1}^{2+\alpha}} |t|^{-N(\beta)+1}\\
        &\leq \log(|s|) M_{k-1}^{-(2+\alpha)(N(\beta)-2)}.
    \end{split}
\end{equation*}
Therefore,
\begin{equation*}
    \sum_{\substack{|t| \geq 2 M_{k-1}^{2+\alpha} \\ t \neq s}} |G(s-t)H(t)| \leq C \cdot C_{2,N(\beta)} \log(|s|)M_{k}^{-1}(1+M_k^\beta)M_{k-1}^{-1+3(2+\alpha)}(1+M_{k-1}^\beta).
\end{equation*}
Since $|s| \geq \min(M_k, M_k^{1 + \beta})/2$, by \eqref{Mcond2}, we obtain
\begin{equation}\label{FastG_tneqs}
\sum_{\substack{|t| \geq 2 M_k^{2+\alpha} \\ t \neq s}} |G(s-t)H(t)| \leq C \cdot C_{2,N(\beta)} \log^2(|s|) M_k^{-1}(1+M_k^\beta).
\end{equation}
Combining \eqref{F*Gtsmall}, \eqref{F0Gs} and \eqref{FastG_tneqs}, we get \eqref{convlemres2}.
\end{proof}

\begin{proof}[Proof of \eqref{convlemres3}]
We write 
\begin{equation*}
\begin{split}
|G\ast H(s)| = \sum_{|t| \leq |s|/2} |G(t)H(s-t)| +     \sum_{|t| \geq |s|/2 } |G(t)H(s-t)|. 
\end{split}
\end{equation*}
When $|t| \leq |s|/2$, we have $|t-s| \geq |s| -|t| \geq |s|/2$. 
Since $|G| \leq 1$, 
\begin{equation*}
\sum_{|t| \leq |s|/2} |G(t)H(s-t)| \leq \sum_{|t| \leq |s|/2} |H(s-t)| \leq \sum_{|u| \geq |s|/2} |H(u)|.
\end{equation*}
Since $|s| \geq 2M_k^{2+\alpha}$, the assumption \eqref{Mcond1} implies that $|s|/2 \geq 2M_{k-1}^{2+\alpha}$, so we can use \eqref{Gcond1}. By using \eqref{Gcond1} (with $N+1$ in place of $N$), we obtain 
\begin{equation*}
\begin{split}
\sum_{|t| \leq |s|/2} |G(t)H(s-t)| &\leq C_{2,N+1} M_{k-1}^{-1+(2+\alpha)(N+2)}(1+M_{k-1}^\beta)  \sum_{|u| \geq |s|/2} \log (|u|) |u|^{-(N+1)} \\
&\leq C_{2,N+1} \log(|s|) M_{k-1}^{-1+(2+\alpha)(N+2)}(1+M_{k-1}^\beta) 2 \cdot 4^N |s|^{-N}.
\end{split}
\end{equation*}
By using \eqref{Mcond1}, we get
\begin{equation*}
\begin{split}
2 \cdot 4^N M_{k-1}^{-1+(2+\alpha)(N+2)} (1+M_{k-1}^\beta) &\leq 4^{N+1} M_{k-1}^{(2+\alpha)(N+2)} \\
&\leq M_k^{(N+2)/100}\\
&\leq M_k^{-1+(2+\alpha)(N+1)}(1+M_{k}^\beta).
\end{split}
\end{equation*}
Thus, 
\begin{equation*}
\sum_{|t| \leq |s|/2} |G(t)H(s-t)| \leq C_{2,N+1}\log(|s|) M_k^{-1+(2+\alpha)(N+1)}(1+M_{k}^\beta) |s|^{-N}.
\end{equation*}
When $|t| \geq |s|/2 \geq M_k^{2+\alpha}$, 
we use \eqref{Fcond3} (with $N+1$ in place of $N$) and that $|H| \leq 2$. Thus 
\begin{equation*}
\begin{split}
\sum_{|t| \geq |s| /2 }|G(t)H(s-t)| &\leq 2 \sum_{|t| \geq |s|/2} |G(t)| \\
&\leq 2 C_{1, N+1} M_k^{-1+(2+\alpha)(N+1)}(1+M_{k}^\beta)\sum_{|t| \geq |s|/2}\log (|t|)|t|^{-(N+1)} \\
&\leq 2 C_{1, N+1} \log(|s|) M_k^{-1+(2+\alpha)(N+1)}(1+M_{k}^\beta)  2\cdot 4^N  |s|^{-N}.
\end{split}
\end{equation*}
Combining the two cases, we get \eqref{convlemres3}.
\end{proof}

\section{Construction and properties of the measure \texorpdfstring{$\mu$}{mu}}\label{sec_measure}

Let $\alpha > 0$ and $-1 < \beta < 1$ be given. In this section, we will construct the measure $\mu$, prove that it has support contained in $\bT \cap E(\alpha)$, and prove that it satisfies 
\begin{itemize}
\item  \eqref{MT_reg_mu} and \eqref{MT_FD_mu} in case $0 \leq \beta < 1$, or
\item  \eqref{MT_reg_mu_2} and \eqref{MT_FD_mu_2} in case $-1 < \beta < 0$.
\end{itemize}

\subsection{Preliminaries}

Let $F$ be a nonnegative Schwartz function supported on $\bT$ such that $\int F dx=1$ and $|F| \gtrsim 1$ if $|x| \leq 1/4$ and let $F_0$ be a nonnegative Schwartz function supported on $\bT$ such that $\int F_0 dx=1$ and $F_0$ is supported on $[-3/8,-1/8] \cup [1/8,3/8]$.

Since $F$ and $F_0$ are Schwartz functions, there are nondecreasing sequences of numbers $C_N, \widetilde{C}_N \geq 1$ (where $N=0,1,2\ldots$) such that 
\begin{align}\label{schwartz tail bound}
|\widehat{F}(s)| \leq C_N|s|^{-N} \quad \text{ and } \quad |\widehat{F}_0(s)| \leq \widetilde{C}_N|s|^{-N}
\end{align}
for each $|s| \geq 1$. Indeed, one may take 
\begin{equation*}
C_N = \max_{0 \leq i \leq N } \sup_{x \in \mathbb{T}}|\partial^i F | \qquad \text{and} \qquad \widetilde{C}_N = \max_{0 \leq i \leq N } \sup_{x \in \mathbb{T}}|\partial^i F_0 |.
\end{equation*}

Because $F$ is a Schwartz function, so is $\mathcal{F}(F^2)$. Since $F^2$ has a positive integral, $\mathcal{F}({F^2})(0) > 0$. We can therefore choose a constant $C_F$ such that $\mathcal{F}({F^2})(\xi) \gtrsim 1$ provided that $|\xi| \leq C_F$.

Let $\alpha > 0$ and $-1 < \beta < 1$. 
We fix a sequence $\{\epsilon_k\}_{k \in \bN}$ that converges to $0$. For each $k$, we choose $N_k$ such that $N_k \geq \max(N(\beta),k)$ and $N_k \epsilon_k \geq 2+\alpha$. 
 
Let $P_{M}$ denote the set of prime numbers strictly between $M$ and $2M$. Fix $D > 1$. We choose a rapidly increasing sequence of numbers $\{M_k\}_{k \in \bN \cup \{0\} }$ that satisfies the following:   
\begin{itemize}
\item $\min (M_0, M_0^{1+\beta}, M_0^\alpha) \geq 100$ and $\min(M_0, M_0^{1-\beta}) \geq \log(M_0)$.
\item For any $k$, 
\begin{equation*}
\frac{M_k}{{D}\log(M_k) } \leq |P_{M_k}| .
\end{equation*}
\item For any $k$,
\begin{equation}\label{Mcond4}
\log(M_k) \geq 
4^{N(\beta)+1} N(\beta) M_{k-1}^{(2+\alpha)(N(\beta)+1)}.
\end{equation}
\item For any $k$,
\begin{equation}\label{Mcond5}
\log\left(\frac{1}{2}\min (M_k, M_k^{1+\beta})\right) \geq \left(\frac{1}{1+\beta}+1 \right)^2(1+2{D}) \left( 1+ \widetilde{C}_{N_k+k}+ 8^{N_k+k+2}C_{N_k+k}D \ \right).
\end{equation}
\end{itemize}
Note that \eqref{Mcond4} implies \eqref{Mcond1} and \eqref{Mcond2} in Lemma \ref{convstab} since $M_k \geq \log(M_k)$.

Recall that 
$$
E(\alpha) = \cbr{x \in \RR : |x-r/q| \leq |q|^{-(2+\alpha)} \forinfmany (q,r) \in \ZZ \times \ZZ }. 
$$

For each $k \in \NN$, define  
\[
E(\alpha,k) = \cbr{ x \in \bR: |x-r/p| \leq \frac{M_k^{-(2+\alpha)}}{2} \text{ for some }  p \in P_{M_k}, r \in \ZZ, |r| \leq (p-1)/2  }.
\]
Note that $E(\alpha,k)$ is a subset of $\bT$. Note also that any point belonging to $E(\alpha, k)$ for infinitely many $k$ will belong to $E(\alpha)$.

For each odd $k \in \bN$, we choose a number $B_k \in \bN$ 
such that $M_k^{1+\beta} \leq B_k \leq 2M_k^{1+\beta}$. Additionally, if $\beta = 0$, we choose $B_k \in P_{M_k}$; if $\beta \neq 0$, we choose $B_k$ such that $B_k $ is coprime to every $p \in P_{M_k}$ for all sufficiently large $k$.

For each odd $k \in \bN$, define 
\begin{equation*}
C(\beta,k) = \cbr{ x\in \bR: \left|x- \frac{a}{B_k}\right| \leq \frac{M_k^{-(2+\alpha)}}{2}\ \text{ for some }   a \in \mathbb{Z}, |a| \leq (B_k-1)/2}.
\end{equation*}
Note that $C(\beta,k)$ is a subset of $\bT$. 

\begin{remark}
For $\beta = 0$, it is possible to make the same choice of $B_k$ as we do in the $\beta \neq 0$ case. However, the proof is somewhat simpler and more intuitive in the $\beta = 0$ case with the choice we have made. 
\end{remark}

\subsection{The single-scale functions \texorpdfstring{$g_k$}{gk}}

For even $k$, define 
\begin{equation*}
g_k(x) := \frac{M_k^{1+\alpha}}{|P_{M_k}|} \sum_{p \in P_{M_k}} \sum_{|r| \leq (p-1)/2} F(M_k^{1+\alpha}(px-r)).
\end{equation*}
For odd $k$, define 
\begin{equation*}
\begin{split}
g_k(x) := &\frac{M_k^{1+\alpha}}{|P_{M_k}|+M_k^\beta+1} \sum_{p \in P_{M_k}} \sum_{|r| \leq (p-1)/2} F(M_k^{1+\alpha}(px-r))\\
&+\frac{M_k^{1+\alpha}(1+M_k^{-\beta})}{|P_{M_k}|+M_k^\beta+1} \sum_{|a| \leq (B_k-1)/2} F(M_k^{1+\alpha-\beta}(B_kx-a)).
\end{split}
\end{equation*}
Note that, for every $k \in \NN$, 
$\int g_k dx =1$ 
and the support of $g_k$ is contained in $\bT$. 
If $k$ is even, the support of $g_k$ is contained in $E(\alpha, k)$. 
If $k$ is odd, the support of $g_k$ is contained in $E(\alpha, k) \cup C(\beta, k)$.

\begin{lem}\label{lem_gdecay}
For every $k \in \bN$  and $s \in \bZ$, the following statements hold.  
\begin{enumerate}[(1)]
\item $|\widehat{g}_k(s)| \leq 1$.
\item $\widehat{g}_k(s) =0 $ if $1 \leq |s| < \min(M_k, M_k^{1+\beta})$.
\item If $s \neq 0$,
\begin{equation}\label{g_kdecay_0}
    |\widehat{g}_k(s)| \leq 2 D \left(\frac{1}{1+\beta}+1 \right) \log(|s|)M_k^{-1}(1+M_k^\beta). 
\end{equation}
\item If $s \neq 0$, 
\begin{equation}\label{g_kdecay}
    |\widehat{g}_k(s)| \leq 2^{N+1} C_N D\left( \frac{1}{1+\beta}+1 \right) \log(|s|)M_k^{-1+(\alpha+2)N}(1+M_k^\beta) |s|^{-N}.
\end{equation}
\end{enumerate}
\end{lem}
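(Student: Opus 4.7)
\medskip

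\textbf{Proof proposal for Lemma \ref{lem_gdecay}.}

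The plan is to obtain an explicit formula for $\widehat{g}_k(s)$ by computing Fourier coefficients term-by-term, then extract each of the four bounds from this formula using only (i) the trivial bound $|\widehat{F}| \le 1$ (since $F \ge 0$ and $\int F = 1$), (ii) the Schwartz decay in \eqref{schwartz tail bound}, and (iii) the prime counting estimate $|P_{M_k}| \ge M_k/(D\log M_k)$. First, for any $p$, $r$, $A$, substituting $u = px - r$ gives
\begin{equation*}
\int_{\bT} F(A(px-r))\, e^{-2\pi i s x}\, dx = \frac{e^{-2\pi i s r/p}}{pA}\, \widehat{F}\!\left(\frac{s}{pA}\right),
\end{equation*}
so taking $A = M_k^{1+\alpha}$ (resp. $A = M_k^{1+\alpha - \beta}$, with $p$ replaced by $B_k$) and summing over $r$ (resp. $a$), the complete exponential sums $\sum_{|r|\le (p-1)/2} e^{-2\pi i s r/p}$ and $\sum_{|a|\le (B_k-1)/2} e^{-2\pi i s a/B_k}$ each equal $p$ (resp. $B_k$) when $p \mid s$ (resp. $B_k \mid s$) and are otherwise $0$. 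After the cancellations, one finds
\begin{equation*}
\widehat{g}_k(s) = \frac{1}{|P_{M_k}| + M_k^\beta + 1}\sum_{\substack{p \in P_{M_k} \\ p \mid s}} \widehat{F}\!\left(\frac{s}{pM_k^{1+\alpha}}\right) + \frac{M_k^\beta + 1}{|P_{M_k}| + M_k^\beta + 1}\, \widehat{F}\!\left(\frac{s}{B_k M_k^{1+\alpha - \beta}}\right) \mathbb{1}[B_k \mid s]
\end{equation*}
for odd $k$, and the same expression without the second term (and without the $M_k^\beta+1$ in the denominator) for even $k$.

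From this formula, statement (1) is immediate since $g_k \ge 0$ and $\int g_k = 1$, so $|\widehat{g}_k(s)| \le \widehat{g}_k(0) = 1$. Statement (2) follows by inspection: the first term requires some prime $p > M_k$ to divide $s$, forcing $|s| \ge M_k$; the second term requires $B_k \mid s$, forcing $|s| \ge B_k \ge M_k^{1+\beta}$. Hence both vanish when $1 \le |s| < \min(M_k, M_k^{1+\beta})$.

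For (3) and (4), the key quantitative input is that the number of primes in $P_{M_k}$ dividing $s$ is at most $\log|s|/\log M_k$ (because distinct primes exceeding $M_k$ have product at most $|s|$), while $|P_{M_k}| \ge M_k/(D\log M_k)$. For (3), I would just use $|\widehat{F}| \le 1$, giving the first-term bound $D\log|s|\, M_k^{-1}$. For the second term, which only contributes when $B_k \mid s$, we get $|s| \ge B_k \ge M_k^{1+\beta}$, so $1 \le \log|s|/((1+\beta)\log M_k)$; combining with $(M_k^\beta+1)/|P_{M_k}| \le D\log(M_k)(1+M_k^\beta)/M_k$ converts the coefficient $(M_k^\beta+1)/(|P_{M_k}|+M_k^\beta+1)$ into the desired form $D(1+\beta)^{-1}\log|s|\, M_k^{-1}(1+M_k^\beta)$, and summing with the first term yields the constant $D(\tfrac{1}{1+\beta}+1)$. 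For (4), I would replace the trivial bound by the Schwartz tail: in both cases ($|t| \ge 1$ and $|t| < 1$, where $t = s/(pM_k^{1+\alpha})$ or $t = s/(B_kM_k^{1+\alpha-\beta})$), one checks that $|\widehat{F}(t)| \le C_N 2^N M_k^{(2+\alpha)N}|s|^{-N}$, using for the small-$t$ case that $|s| \le 2M_k^{2+\alpha}$ and so $1 \le (2M_k^{2+\alpha})^N|s|^{-N}$. Substituting and repeating the counting-estimate manipulation of (3) yields (4).

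The only nontrivial bookkeeping is in (3) and (4), where one must carefully insert the $\log|s|/((1+\beta)\log M_k) \ge 1$ trick to introduce a $\log|s|$ factor into the second-term bound; this is what produces the factor $\tfrac{1}{1+\beta}+1$ appearing in the stated constants. Everything else is routine manipulation of the explicit formula.
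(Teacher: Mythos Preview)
Your proposal is correct and follows essentially the same route as the paper: compute $\widehat{g}_k$ explicitly via the change of variables and the complete exponential sum, then read off (1) and (2) directly, and obtain (3) and (4) from the divisor-counting bound $\#\{p\in P_{M_k}: p\mid s\}\le \log|s|/\log M_k$ together with $|P_{M_k}|\ge M_k/(D\log M_k)$ and the Schwartz bound $|\widehat{F}(t)|\le \min(1,\,2^N C_N M_k^{(2+\alpha)N}|s|^{-N})$. The only cosmetic differences are that the paper keeps the unimodular phase $e^{\pi i(p-1)s/p}$ in the formula (irrelevant for bounds) and, rather than introducing $\log|s|$ into the $B_k$-term via your inequality $1\le \log|s|/((1+\beta)\log M_k)$, first invokes (2) to reduce to $|s|\ge \min(M_k,M_k^{1+\beta})$ and then uses $\log M_k\le \log|s|$ (for $\beta\ge 0$) or the analogous inequality (for $\beta<0$); this is where the paper picks up the extra factor of $2$ in the constant that your argument does not need.
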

\begin{proof}
Since $F$ and $g_k$ are supported on $\bT$, 
$\widehat{F}(s) = \mathcal{F}(F)(s)$ and $\widehat{g_k}(s) = \mathcal{F}(g_k)(s)$. 
We shall need the following calculation. 
Suppose $c>0$ and $b$ is an odd positive integer. In the following, the variable $a$ is an integer. For every $s \in \ZZ$, 
\begin{align*}
\int_{\RR} \sum_{|a| \leq (b-1)/2} F(c(bx-a)) e^{-2\pi i x s} dx
=
\dfrac{1}{cb} \widehat{F}\rbr{\dfrac{s}{cb}} e^{\pi i (b-1) s / b} \sum_{a=0}^{b-1} e^{-2\pi i a s / b}
\end{align*}
The sum on the right equals $b$ if $b$ divides $s$ and equals $0$ otherwise. 
We now apply this calculation to compute $\widehat{g}_k$ and prove (1), (2), (3), and (4). 

First assume $k$ is even. We have 
\[\widehat{g}_k(s) = \frac{1}{|P_{M_k}|} \sum_{p \in P_{M_K}} e^{\pi i (p-1) s / p} \widehat{F} \left( \frac{s}{p M_k^{1 + \alpha}} \right) \mathbf{1}_{p \bZ}(s),\]
where $\mathbf{1}_{p\bZ}(s) = 1$ if $p \mid s$ and $0$ otherwise. Hence (1) and (2) follow easily.
It remains to establish (3) and (4). By \eqref{schwartz tail bound}, we have the estimate
\begin{equation*}
\left| \widehat{F} \left(\frac{s}{pM_k^{1+\alpha}} \right) \right| \leq \min (1, 2^N C_N M_k^{(2+\alpha)N}|s|^{-N} ).
\end{equation*}
Since we have already established (2) for $1 \leq |s| < M_k$, it is enough to consider $|s| \geq M_k$. Since there are at most $\frac{\log |s|}{\log M_k}$ values of $p \in P_{M_k}$ such that $p \mid s$, we have
\begin{align*}
|\hat g_k(s)| & \leq D M_k^{-1} \bigg(\log (|s|) + \log (M_k) \bigg) \min(1, 2^N C_N M_k^{(2 + \alpha)N} |s|^{-N}) \\
& \leq 2D M_k^{-1}\log(|s|) \min (1, 2^N C_N M_k^{N(2+\alpha)}|s|^{-N} ), 
\end{align*}
which immediately gives (3) and (4). 

Now consider the case where $k$ is odd. We have
\begin{equation}\label{g_khat}
\begin{split}
\widehat{g}_k(s) = 
&\frac{1}{|P_{M_k}|+M_k^\beta+1}  \sum_{p \in P_{M_k}} e^{\pi i (p-1) s / p} \widehat{F} \left( \frac{s}{pM_k^{1+\alpha}} \right)\mathbf{1}_{p\bZ}(s) 
\\
&+ 
\frac{e^{\pi i (B_k-1) s / B_k} (1+M_k^\beta) }{|P_{M_k}|+M_k^\beta+1} e^{\pi i (B_k-1)/B_k}
\widehat{F}\left( \frac{s}{B_kM_k^{\alpha-\beta+1}} \right) \mathbf{1}_{B_k\bZ}(s) , 
\end{split}
\end{equation}
where $\mathbf{1}_{p\bZ} (s)$ is defined as before and $\mathbf{1}_{B_k\bZ} (s) = 1$ if $B_k |s$ and $\mathbf{1}_{B_k\bZ}(s) =0$ otherwise. Thus, (1) and (2) easily follow. 
By \eqref{schwartz tail bound}, 
\begin{equation*}
\abs{ \widehat{F} \left(\frac{s}{pM_k^{1+\alpha}} \right) } \leq \min (1, 2^N C_N M_k^{(2+\alpha)N}|s|^{-N} )
\end{equation*}
and
\begin{equation*}
    \abs{ \widehat{F} \left(\frac{s}{B_k M_k^{\alpha-\beta+1}} \right) } \leq \min (1, 2^N C_N M_k^{(2+\alpha)N}|s|^{-N} ).
\end{equation*}
If $\beta\geq 0$, $\widehat{g}_k(s) = 0$ if $1 \leq s < M_k$ by (2). Thus, it suffices to consider $|s| \geq M_k$. Using that the number of $p \in P_{M_k}$ such that $p|s$ is bounded by $\frac{\log|s|}{\log M_k}$, we obtain
\begin{equation*}
\begin{split}
    |\widehat{g}_k(s)| &\leq DM_k^{-1} \left( \log(|s| ) +\log(M_k)(1+M_k^\beta) \right) \min (1, 2^N C_N M_k^{(2+\alpha)N}|s|^{-N} )\\
    &\leq 2D M_k^{-1}\log(|s|) (1+M_k^\beta)\min (1, 2^N C_N M_k^{N(2+\alpha)}|s|^{-N} ).
\end{split}
\end{equation*}
If $-1< \beta<0$, it suffices to consider $|s| \geq M_k^{1+\beta}$. Similarly, we obtain that 
\begin{equation*}
    |\widehat{g}_k(s)| \leq D M_k^{-1}\log(|s|) \left(1+\frac{1}{1+\beta}(1+M_k^\beta)\right)\min (1, 2^N C_N M_k^{(2+\alpha)N}|s|^{-N} ).
\end{equation*}
Therefore, we get (3) and (4). 
\end{proof}

\subsection{The measures \texorpdfstring{$\mu_k$}{muk}}

We define a sequence of functions
\begin{equation*}
\mu_k = g_k \cdots g_1 F_0   
\end{equation*}
and we identify $\mu_k$ with the measure $\mu_k dx$.

\begin{lem}\label{mu_induction}
The following statements hold for all $k \in \bN $ and $s \in \bZ$: 
\begin{enumerate}[(1)]
    \item We have 
\begin{equation}\label{convgMres0}
    |\widehat{\mu}_k(s) | \leq 2.
\end{equation}
\item If $|s| \leq \min(M_k,M_k^{1+\beta})/2$,
    \begin{equation}\label{convgMres1}
|\widehat{\mu }_{k} (s) - \widehat{\mu }_{k-1}(s)| \leq \log (M_k)M_k^{-96}.
\end{equation}
\item If $|s| \geq \min(M_k,M_k^{1+\beta})/2$,
\begin{equation}\label{convgMres2}
|\widehat{\mu }_{k}(s) | \leq \log^3(|s|) M_k^{-1}(1+M_k^\beta).
\end{equation}
\item If $|s| \geq 2M_k^{2+\alpha}$, for any $N \in \bN$,
\begin{equation}\label{convgMres3}
|\widehat{\mu }_{k}(s) | \leq \left(\widetilde{C}_{N+k}+ 2 D \left( \frac{1}{1+\beta}+1 \right)\sum_{j=1}^{k}8^{N+j}C_{N+j}\right) \log(|s|) M_{k}^{-1+(2+\alpha)(N+1)}(1+M_k^\beta) |s|^{-N}.
\end{equation} 
\end{enumerate}
\end{lem}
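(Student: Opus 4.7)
The plan is to prove statements (1)--(4) by induction on $k$, with the Convolution Stability Lemma (Lemma~\ref{convstab}) doing the heavy lifting at each stage. Setting $\mu_0 := F_0$, the identity $\mu_k = g_k \mu_{k-1}$, combined with the fact that both factors are supported in $\bT$, gives the discrete convolution identity $\widehat{\mu}_k = \widehat{g}_k \ast \widehat{\mu}_{k-1}$ on $\bZ$. I would apply Lemma~\ref{convstab} with $G = \widehat{g}_k$ and $H = \widehat{\mu}_{k-1}$ (in the base case $k=1$, with $H = \widehat{F_0}$).

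\textbf{Verifying the inputs to Lemma \ref{convstab}.} Lemma~\ref{lem_gdecay} delivers the hypotheses on $G$: namely, \eqref{Fcond1}, \eqref{Fcond2}, and \eqref{Fcond3} with $C = 2D\!\left(\tfrac{1}{1+\beta}+1\right)$ and $C_{1,N} = 2^{N+1}C_N D\!\left(\tfrac{1}{1+\beta}+1\right)$. The growth conditions \eqref{Mcond1}--\eqref{Mcond2} on $(M_{k-1},M_k)$ follow from \eqref{Mcond4}. For $H$ in the base case, $|\widehat{F_0}|\leq 1$ and \eqref{schwartz tail bound} give the hypothesis \eqref{Gcond1} with $C_{2,N} = \widetilde{C}_N$, absorbing the harmless factors $\log(|s|)M_0^{-1+(2+\alpha)(N+1)}(1+M_0^\beta)\geq 1$. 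In the inductive step, the bound $|H|\leq 2$ is exactly \eqref{convgMres0} at level $k-1$, and \eqref{Gcond1} is \eqref{convgMres3} at level $k-1$ with $C_{2,N} = \widetilde{C}_{N+k-1} + 2D\!\left(\tfrac{1}{1+\beta}+1\right)\sum_{j=1}^{k-1}8^{N+j}C_{N+j}$.

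\textbf{Deducing (2), (3), (4).} Plugging these inputs into Lemma~\ref{convstab} yields \eqref{convgMres2} and \eqref{convgMres3} at level $k$ directly, provided the constants line up. For \eqref{convgMres3}, Lemma~\ref{convstab} produces the constant $4^{N+1}C_{1,N+1} + C_{2,N+1}$; a short calculation using $4^{N+1}\cdot 2^{N+2} = 2\cdot 8^{N+1}$ together with the index shift $j\mapsto j+1$ in the sum defining $C_{2,N+1}$ recovers the claimed closed form $\widetilde{C}_{N+k} + 2D\!\left(\tfrac{1}{1+\beta}+1\right)\sum_{j=1}^{k}8^{N+j}C_{N+j}$. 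For \eqref{convgMres1}, conclusion \eqref{convlemres1} of Lemma~\ref{convstab} gives the bound $C_{2,N(\beta)}M_k^{-96}$; monotonicity of $\widetilde{C}_i,C_i$ together with $N_k\geq N(\beta)$ lets us absorb this into $\log(M_k)$ via \eqref{Mcond5}, which is precisely designed to dominate the constant $C_{2,N(\beta)}$ from the previous stage.

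\textbf{Deducing (1).} Although this item looks delicate, there is a clean positivity argument. Since $\mu_k = g_k\cdots g_1 F_0 \geq 0$, we have
\[ |\widehat{\mu}_k(s)| \leq \int \mu_k\,dx = \widehat{\mu}_k(0) \qquad \text{for every } s\in\bZ, \]
so it suffices to bound $\widehat{\mu}_k(0)$. Since $s=0$ trivially satisfies $|s|\leq \tfrac{1}{2}\min(M_j,M_j^{1+\beta})$ at every level, estimate \eqref{convgMres1} is available at every stage of the induction, and telescoping yields
\[ |\widehat{\mu}_k(0) - \widehat{F_0}(0)| \leq \sum_{j=1}^{k}\log(M_j)\,M_j^{-96}. \]
The super-exponential growth imposed by \eqref{Mcond4} makes this sum vanishingly small (far less than $1$), and $\widehat{F_0}(0)=\int F_0 = 1$, so $\widehat{\mu}_k(0)\leq 2$, giving \eqref{convgMres0}. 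The main obstacle in the whole argument is accurate bookkeeping of the constants through the induction so that the explicit closed form in \eqref{convgMres3} is preserved, and ensuring \eqref{Mcond5} is invoked at the precise moment needed to collapse $\mathcal{C}_{k-1,N(\beta)}$ into the simple factor $\log(M_k)$ in \eqref{convgMres1}; both are routine once the induction is set up correctly.
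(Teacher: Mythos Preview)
Your proposal is correct and follows essentially the same approach as the paper: induct via the Convolution Stability Lemma with $G=\widehat{g}_k$ and $H=\widehat{\mu}_{k-1}$, using exactly the constants you name, track the explicit constant in \eqref{convgMres3} through the recursion (the paper carries this as an auxiliary statement (iii)), use positivity plus telescoping for \eqref{convgMres0}, and invoke \eqref{Mcond5} to collapse the accumulated constants into the $\log(M_k)$ and $\log^3(|s|)$ factors in \eqref{convgMres1} and \eqref{convgMres2}. The only cosmetic difference is that the paper first proves three intermediate inequalities (i)--(iii) with fully explicit constants by induction and then deduces (1)--(4), whereas you describe the induction directly on (1)--(4); the content is identical.
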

\begin{proof}
First, we prove by induction that the following hold for all $k \in \bN$:
\begin{enumerate}[(i)]
    \item If $|s| \leq  \min(M_k,M_k^{1+\beta})/2$,
    \begin{equation*}
|\widehat{\mu }_{k-1} \ast \widehat{g}_k(s) - \widehat{\mu }_{k-1}(s)| \leq \left(\widetilde{C}_{N(\beta)+k-1}+2D\left(\frac{1}{1+\beta}+1 \right) \sum_{j=1}^{k} 8^{N(\beta)+j-1} C_{N(\beta)+j-1}\right)M_k^{-96}.
\end{equation*}
\item If $|s| \geq  \min(M_k,M_k^{1+\beta})/2$,
\begin{equation*}
\begin{split}
|\widehat{\mu }_{k-1} \ast \widehat{g}_k(s) | \leq &\left(\frac{1}{1+\beta}+1 \right)^2 \left(1+2{D}\right) \left(1+\widetilde{C}_{N(\beta)+k-1}+ 2D \sum_{j=1}^{k} 8^{N(\beta)+j-1} C_{N(\beta)+j-1}\right)\\
&\times \log^2(|s|) M_k^{-1}(1+M_k^\beta).
\end{split}
\end{equation*}
\item If $|s| \geq 2M_k^{2+\alpha}$, for any $N \in \bN$,
\begin{equation*}
\begin{split}
|\widehat{\mu }_{k-1} \ast \widehat{g}_k(s) | \leq&\left(\widetilde{C}_{N+k}+ 2D \left( \frac{1}{1+\beta}+1 \right)\sum_{j=1}^{k}8^{N+j}C_{N+j}\right)\\
&\times \log(|s|) M_{k}^{-1+(2+\alpha)(N+1)}(1+M_k^\beta) |s|^{-N}.
\end{split}
\end{equation*}
\end{enumerate} 

    When $k=1$, $\widehat{\mu}_0 = \widehat{F}_0$ where $F_0$ is a Schwartz function. 
    Let $G = \widehat{g}_1$ with $C=2D((1+\beta)^{-1}+1)$ and $C_{1,N} = 2^{N+1}C_ND((1+\beta)^{-1}+1)$ and 
    let $H =\widehat{\mu}_0$ with $C_{2,N} = \widetilde{C}_N$. By Lemma \ref{lem_gdecay}, $\widehat{g}_1$ satisfies the assumptions on $G$ in Lemma \ref{convstab}. 
    Because $|\widehat{\mu}_0(s) | \leq 1$ and because of \eqref{schwartz tail bound}, $\widehat{\mu}_0$ satisfies the assumptions on $H$ in Lemma \ref{convstab}. Thus, we have (i), (ii) and (iii) when $k=1$.
    
    Now we assume (i), (ii), (iii) when $k=1, 2, \cdots, i-1$ and prove the same statements with $i-1$ replaced by $i$. Let $G = \widehat{g}_i$ with $C= 2D((1+\beta)^{-1}+1)$ and $C_{1,N}=2^{N+1} C_{N}D ((1+\beta)^{-1}+1)$ as before, and let $H=\widehat{\mu }_{i-1} $ with $C_{2,N} = \widetilde{C}_{N+i-1}+ 2 D((1+\beta)^{-1}+1)\sum_{j=1}^{i-1}8^{N+j}C_{N+j}$. By Lemma \ref{lem_gdecay}, $\widehat{g}_i$ satisfies the assumptions on $G$ in Lemma \ref{convstab}. 

    Note that $|\widehat{\mu}_{i-1}(s)| \leq |\widehat{\mu}_{i-1}(0)|$ since $\mu_{i-1}$ is nonnegative. The condition \eqref{Mcond4} implies that $M_k \geq \log(M_k) \geq 2^k$ and \eqref{Mcond5} implies that
    \begin{equation}\label{sumcons_1}
    \begin{split}
    \left(\widetilde{C}_{N(\beta)+k}+ {2D}\left(\frac{1}{1+\beta}+1 \right)\sum_{j=1}^{k} 8^{N(\beta)+j} C_{N(\beta)+j}\right) &\leq \log\left(\frac{1}{2}\min (M_k, M_k^{1+\beta})\right)\\
    &\leq \log(M_k).
    \end{split}
    \end{equation}
    Thus, we use (i) for $k=1, 2, \cdots, i-1$ and obtain
    \begin{equation}\label{mu_i-1est}
        |\widehat{\mu}_{i-1} (0)| \leq |\widehat{\mu}_0(0)| + \sum_{j=1}^{i-1} \log (M_i) M_i^{-96} \leq 2. 
    \end{equation}
    Also, applying (iii) with $k=i-1$ shows that $\hat \mu_{i-1}(s)$ satisfies the assumption \eqref{Gcond1} with $k=i$. Therefore, $\widehat{\mu}_{i-1}$ satisfies the assumptions on $H$ in Lemma \ref{convstab}. Thus, (i), (ii) and (iii) with $k=i$ hold by Lemma \ref{convstab}. By induction, (i),(ii) and (iii) hold for any $k \in \bN$.

    We already proved (4) by (iii). Combining (i) and \eqref{sumcons_1}, we get (2). Similarly, (3) follows from (ii) and \eqref{Mcond5}. Since \eqref{mu_i-1est} holds for any $i \in \bN$, we get (1).
\end{proof}

\subsection{Definition and support of \texorpdfstring{$\mu$}{mu}}

We now introduce the measure $\mu$. 
Since $\mu_0(\RR) = \widehat{\mu_0}(0) = \int F_0 dx = 1$ and since $M_k$ grows rapidly, \eqref{convgMres1} implies $\mu_k(\RR) = \widehat{\mu_k}(0) \in (1/2,2)$ for all $k$. Of course, each $\mu_k$ is supported on the compact set $\TT$. 
Therefore Prohorov's theorem (see \cite[vol.2, p.202]{bogachev}) implies the sequence $(\mu_k)_{k=1}^{\infty}$ has a subsequence $(\mu_{k_j})_{j=1}^{\infty}$ 
which converges weakly to a non-zero finite measure $\mu$. In fact, the Fourier decay estimates on $\mu_k$ and an argument involving the L\'evy continuity theorem allow us to prove the full sequence $(\mu_k)_{k=1}^{\infty}$ converges to $\mu$ (see e.g., \cite{hambrook-tams}), but we shall not need that fact here. By multiplying $\mu$ by an appropriate constant, it becomes a probability measure. 

From the definition of $\mu_k$, it is clear the support of $\mu$ is contained in $\TT \cap \bigcap_{k \in \NN} \supp(g_k)$. 
But recall that the support of $g_k$ is contained in $E(\alpha, k)$ if $k$ is even and is contained in $E(\alpha, k) \cup C(\beta, k)$ if $k$ is odd. 
Recall also that any point that belongs to $E(\alpha, k)$ for infinitely many $k$ must belong to $E(\alpha)$. Therefore 
$$
\bigcap_{k \in \NN} \supp(g_k) 
\subseteq 
\bigcap_{k \text{ even}} E(\alpha, k) \cap \bigcap_{k \text{ odd}} \rbr{E(\alpha, k) \cup C(\beta, k)}
\subseteq 
\bigcap_{k \text{ even}} E(\alpha, k) \subseteq E(\alpha). 
$$
This shows that $\supp(\mu) \subseteq \TT \cap E(\alpha)$. 

\subsection{Fourier decay of \texorpdfstring{$\mu$}{mu}}

We now show that $\mu$ satisfies \eqref{MT_FD_mu} if $\beta \geq 0$ and \eqref{MT_FD_mu_2} if $\beta < 0$. 

\begin{prop}\label{lem_FD_mu}
If $0 \leq \beta <1$, the measure $\mu$ satisfies
\begin{equation}\label{decaymu}
|\widehat{\mu}(s) | \lesssim_\epsilon \log^3(|s|) |s|^{-\frac{1-\beta}{2+\alpha+\epsilon}}, \qquad \text{for every $s \in \ZZ$}.
\end{equation}
If $-1 < \beta <0$, the measure $\mu$ satisfies
\begin{equation}\label{decaymu_2}
|\widehat{\mu}(s) | \lesssim_\epsilon \log^3(|s|)|s|^{-\frac{1}{2+\alpha+\epsilon}}, \qquad \text{for every $s \in \ZZ$}.
\end{equation}
\end{prop}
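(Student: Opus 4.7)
The plan is to deduce the decay estimates on $\widehat{\mu}$ from Lemma \ref{mu_induction} by selecting, for each frequency $s$, a natural scale $k = k(s)$ at which $\widehat{\mu}_k(s)$ is already close to its limit $\widehat{\mu}(s)$. The small-frequency regime $|s| < \min(M_1, M_1^{1+\beta})/2$ is handled trivially: there $|\widehat{\mu}(s)| = O(1)$ while $|s|$ is bounded, so the estimate holds after enlarging the implicit constant. For $|s| \geq \min(M_1, M_1^{1+\beta})/2$ I would let $k \geq 1$ be the unique integer with
\[
\min(M_k, M_k^{1+\beta})/2 \leq |s| < \min(M_{k+1}, M_{k+1}^{1+\beta})/2.
\]

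The first step is to replace $\widehat{\mu}(s)$ by $\widehat{\mu}_k(s)$ with negligible error. Since $\mu$ is a weak limit of a subsequence $\mu_{k_j}$, we have $\widehat{\mu}(s) = \lim_j \widehat{\mu}_{k_j}(s)$; by the choice of $k$, $|s| \leq \min(M_i, M_i^{1+\beta})/2$ for every $i \geq k+1$, so Lemma \ref{mu_induction}(2) and a telescoping argument give
\[
|\widehat{\mu}(s) - \widehat{\mu}_k(s)| \leq \sum_{i > k} \log(M_i) M_i^{-96} = O(\log(M_{k+1}) M_{k+1}^{-96}),
\]
which is far smaller than the target $|s|^{-(1-\beta)/(2+\alpha+\epsilon)}$ (or $|s|^{-1/(2+\alpha+\epsilon)}$), since $|s| \leq M_{k+1}$ and the relevant exponent is $< 1 \leq 96$.

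To bound $|\widehat{\mu}_k(s)|$ I would split on the size of $|s|$ relative to $M_k^{2+\alpha+\epsilon}$. When $|s| \leq M_k^{2+\alpha+\epsilon}$, Lemma \ref{mu_induction}(3) gives $|\widehat{\mu}_k(s)| \leq \log^3(|s|) M_k^{-1}(1+M_k^\beta)$, and a direct computation using $M_k^{1-\beta} \geq |s|^{(1-\beta)/(2+\alpha+\epsilon)}$ for $\beta \geq 0$, and $M_k \geq |s|^{1/(2+\alpha+\epsilon)}$ for $\beta < 0$, converts this into the required decay. When $|s| > M_k^{2+\alpha+\epsilon}$, I would apply Lemma \ref{mu_induction}(4) with $N = N_k$: writing $|s| = M_k^\sigma$ with $\sigma > 2+\alpha+\epsilon$, the exponent of $|s|$ in the resulting bound simplifies to $[(2+\alpha)(N_k+1) - (1-\beta) - N_k\sigma]/\sigma$, and the defining property $N_k \epsilon_k \geq 2+\alpha$ from Section \ref{sec_measure} (together with $\epsilon_k \to 0$) forces this to be at most $-(1-\beta)/(2+\alpha+\epsilon)$ for every $k$ with $\epsilon_k \leq \epsilon$; the remaining finitely many $k$ are absorbed into the implicit constant.

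The main technical obstacle is controlling the $k$-dependent prefactor
\[
\widetilde{C}_{N_k+k} + 2D\left(\tfrac{1}{1+\beta}+1\right)\sum_{j=1}^k 8^{N_k+j}C_{N_k+j}
\]
that appears in Lemma \ref{mu_induction}(4). Using the monotonicity of $C_N$ and $\widetilde{C}_N$, the sum collapses (up to a constant) to $8^{N_k+k+1}C_{N_k+k}$, and the entire prefactor is then dominated by $\log(\min(M_k, M_k^{1+\beta})/2) \leq \log(|s|)$ via condition \eqref{Mcond5}, which was designed precisely for this purpose. This extra $\log(|s|)$ factor is then absorbed into the $\log^3(|s|)$ appearing in the statement of Proposition \ref{lem_FD_mu}.
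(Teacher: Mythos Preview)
Your proposal is correct and follows essentially the same approach as the paper: both use Lemma \ref{mu_induction}(2) to telescope from $\widehat{\mu}$ down to $\widehat{\mu}_k$ at a well-chosen scale, then apply parts (3) and (4) of that lemma (together with the defining property $N_k\epsilon_k \geq 2+\alpha$ and condition \eqref{Mcond5} to control the prefactor) to bound $|\widehat{\mu}_k(s)|$. The only difference is organizational: the paper first establishes a uniform estimate for $|\widehat{\mu}_k(s)|$ with $\epsilon_k$ in the exponent and then specializes to a given $\epsilon$, whereas you fix $\epsilon$ from the outset, choose $k=k(s)$ directly, and absorb the finitely many scales with $\epsilon_k>\epsilon$ into the implicit constant.
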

\begin{proof}
First, we claim that for all $k \in \bN$,
\begin{equation}\label{j-1muhat_L}
|\widehat{\mu}_{k}(s) | \leq \log^3(|s|)(|s|^{-\frac{1}{2+\alpha+\epsilon_k}}+|s|^{-\frac{1-\beta}{2+\alpha+\epsilon_k}}) \qquad \text{if} \ |s| \geq   \min(M_{k}, M_k^{1+\beta})/2.
\end{equation}

If $\min(M_k, M_k^{1+\beta})/2 \leq |s| \leq 2 M_k^{2+\alpha+\epsilon_{k}}$, it follows from \eqref{convgMres2} that
\begin{equation*}
\begin{split}
|\widehat{\mu}_k(s) | &\leq  \log^3(|s|)M_k^{-1}(1+M_k^\beta)\\
&\lesssim \log^3 (|s|)(|s|^{-\frac{1}{2+\alpha+\epsilon_k}}+|s|^{-\frac{1-\beta}{2+\alpha+\epsilon_k}}).
\end{split}
\end{equation*}
If $|s| \geq 2M_k^{2+\alpha+\epsilon_k}$, \eqref{Mcond5} and \eqref{convgMres3}  imply that 
\begin{equation*}
\begin{split}
|\widehat{\mu_k}(s)| &\leq \log(M_k) \log(|s|) M_k^{-1+(2+\alpha)(N_k+1)}(1+M_k^\beta)|s|^{-N_k}\\
&\leq \log^2(|s|) |s|^{\frac{-1+(2+\alpha)-\epsilon_k N_k}{2+\alpha+\epsilon_k}} (1+ |s|^{\frac{\beta}{2+\alpha+\epsilon_k}}).
\end{split}
\end{equation*}
Since $\epsilon_k N_k \geq 2+\alpha$, we obtain
\begin{equation*}
        |\widehat{\mu_k}(s)| \leq \log^2(|s|) (|s|^{-\frac{1}{2+\alpha+\epsilon_k}}+|s|^{-\frac{1-\beta}{2+\alpha+\epsilon_k}}).
\end{equation*}

We have established \eqref{j-1muhat_L}.

For a given $\epsilon>0$, we choose $k$ such that $\epsilon_k \leq \epsilon$. For $\ell >k$, \eqref{j-1muhat_L} implies that
\begin{equation}\label{mulhat_L}
|\widehat{\mu}_{l}(s) | \lesssim \log^3 (|s|)(|s|^{-\frac{1}{2+\alpha+\epsilon}}+|s|^{-\frac{1-\beta}{2+\alpha+\epsilon}}) \qquad \text{if} \ |s| \geq \min(M_l, M_l^{1+\beta})/2 .
\end{equation}
If $\min(M_{i-1}, M_{i-1}^{1+\beta})/2  \leq |s| \leq \min(M_i, M_i^{1+\beta})/2 $ for some $k+1 \leq i\leq l$, we use \eqref{convgMres1} repeatedly and obtain that
\begin{equation}\label{mulhat_S}
\begin{split}
    |\widehat{\mu}_l(s)| &\leq |\widehat{\mu}_{i-1}(s)| + \sum_{j=i}^l \log(M_j) M_j^{-96}\\
    &\leq \log^2(|s|)(|s|^{-\frac{1}{2+\alpha+\epsilon}}+|s|^{-\frac{1-\beta}{2+\alpha+\epsilon}}) +|s|^{-1}.
\end{split}
\end{equation}
Combining \eqref{mulhat_L} and \eqref{mulhat_S}, we get
\begin{equation*}
|\widehat{\mu}_{\ell}(s) | \lesssim  \log^3 (|s|)(|s|^{-\frac{1}{2+\alpha+\epsilon}}+|s|^{-\frac{1-\beta}{2+\alpha+\epsilon}}) \qquad \text{if} \ |s| \geq \min( M_{k}, M_k^{1+\beta})/2
\end{equation*}
for any $\ell \geq k$. Letting $\ell \rightarrow \infty$, we obtain \eqref{decaymu} and \eqref{decaymu_2}. Note that the implicit constant in \eqref{decaymu} and \eqref{decaymu_2} depends on $\epsilon $, since our choice of $k$ depends on $\epsilon$.
\end{proof}

Since $\supp(\mu) \subseteq \TT$, 
$\widehat{\mu}(\xi) = \mathcal{F}(\mu)(\xi)$ for integer $\xi$.
So the preceding proposition shows that $\mu$ satisfies 
\eqref{MT_FD_mu} and \eqref{MT_FD_mu_2} when $\xi \in \ZZ$. 
In fact, by a standard argument (see, e.g., \cite[p.252-253]{kahane-book} or \cite[p.69]{wolff-book}) the same estimates extend to all $\xi \in \RR$. 
Thus we have proved $\mu$ satisfies \eqref{MT_FD_mu} if $\beta \geq 0$ and \eqref{MT_FD_mu_2} if $\beta < 0$. 

\subsection{Regularity of \texorpdfstring{$\mu$}{mu}}

We now show that $\mu$ satisfies \eqref{MT_reg_mu} if $\beta \geq 0$ and \eqref{MT_reg_mu} if $\beta \leq 0$. We start with some definitions and lemmas. 

For $ l >k $, define 
$$
\mu_{l,k} :=  
g_l \cdots g_{k+1} \mu_{k-1} 
=
g_l \cdots g_{k+1} g_{k-1} \cdots g_1 \mu_0
$$
and 
$\mu_{k,k} := \mu_{k-1}$. 
In other words, $\mu_{l,k}$ is defined by omitting $g_k$ from the product that defines $\mu_l$. 
By re-indexing, the proof of equation \eqref{convgMres0} and \eqref{convgMres1} in Lemma \ref{mu_induction} yields the following lemma, which will be used in the proof of regularity of $\mu$ and the failure of the extension estimate (R). 

\begin{lem}
\begin{equation}\label{mukleq2}
\left| \widehat{\mu}_{l,k} (s)  \right| \leq 2 \qquad \text{for every $s \in \ZZ$} \end{equation}
and
\begin{equation}\label{mukleq}
\left| \widehat{\mu}_{l,k} (s) - \widehat{\mu}_{l-1,k}(s) \right| \leq \log(M_l)M_l^{-96} \qquad \text{if}\  |s| \leq \min( M_{l}, M_l^{1+\beta})/2. \end{equation}
\end{lem}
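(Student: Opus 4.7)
The plan is to mimic the induction that proves \eqref{convgMres0} and \eqref{convgMres1} in Lemma \ref{mu_induction}, with a minor bookkeeping change to account for the missing factor $g_k$. First I would observe that $\mu_{l,k}$ has the same product structure as $\mu_l$ except that the $k$-th factor has been deleted; hence one may re-index the surviving factors. Let $j_1 < j_2 < \cdots < j_{l-1}$ be the elements of $\{1,2,\ldots,l\}\setminus\{k\}$ in increasing order, and set $\tilde g_m := g_{j_m}$ and $\tilde M_m := M_{j_m}$. The re-indexed scale sequence inherits all of the growth conditions imposed on $(M_k)$ in the preliminaries of Section \ref{sec_measure}, because deleting one term from a rapidly increasing sequence only widens the gaps between consecutive terms; in particular the conditions \eqref{Mcond4} and \eqref{Mcond5} continue to hold for $(\tilde M_m)$. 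Moreover, the bounds on $\widehat{\tilde g}_m$ supplied by Lemma \ref{lem_gdecay} are unaffected by the re-indexing, since those bounds are stated uniformly in $k$ and do not depend on the parity pattern of the indices.

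Next I would run the induction in the proof of Lemma \ref{mu_induction} verbatim, with $(\tilde g_m, \tilde M_m)$ in place of $(g_m, M_m)$, treating $\widehat{\mu}_{l,k}$ as the Fourier transform of the partial product after $l-1$ steps. At each inductive step, the previous Fourier-decay estimate is plugged into the Convolution Stability Lemma (Lemma \ref{convstab}) with $G=\widehat{\tilde g}_{m+1}$ and $H$ equal to the Fourier transform of the partial product, yielding the analogues of statements (i), (ii), (iii) from that proof. Since the largest surviving index $j_{l-1}$ equals $l$ (because $l>k$), we have $\tilde M_{l-1}=M_l$, and the conclusions translate directly to \eqref{mukleq2} and \eqref{mukleq}.

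The degenerate cases $l\le k$ are handled separately: if $l<k$ then $\mu_{l,k}=\mu_l$ by inspection, so both statements reduce to Lemma \ref{mu_induction}; if $l=k$, then $\mu_{k,k}=\mu_{k-1}$, so \eqref{mukleq2} is immediate from Lemma \ref{mu_induction} while \eqref{mukleq} is vacuous. The only real bookkeeping is to verify that the constants appearing in estimates such as \eqref{sumcons_1} and \eqref{mu_i-1est} remain controlled when one factor is omitted, which they do, since the relevant sums are then taken over a sub-collection of indices and so can only shrink. I do not anticipate any serious obstacle here, because the entire argument simply reuses machinery already developed in Lemma \ref{mu_induction}; the potentially delicate point is merely confirming that the rapid growth of $M_j$ dominates any mild loss from the re-indexing, which is built into \eqref{Mcond4}--\eqref{Mcond5}.
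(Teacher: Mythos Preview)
Your proposal is correct and is exactly the approach the paper takes: the paper's entire proof is the single sentence ``By re-indexing, the proof of equation \eqref{convgMres0} and \eqref{convgMres1} in Lemma \ref{mu_induction} yields the following lemma,'' and your write-up is a faithful unpacking of what that re-indexing entails. The only minor comment is that the object $\mu_{l,k}$ is defined in the paper only for $l\ge k$, so your discussion of the case $l<k$ is unnecessary (and for $l=k$, the second estimate involves $\mu_{k-1,k}$, which is undefined, so that case should simply be excluded rather than called vacuous).
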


To prove the regularity of the measure $\mu$, we define $k$-interval.
\begin{defn} 
If $k$ is odd, a \textit{$k$-interval} is a connected component of $E(\alpha, k) \cup C(\beta, k)$. 
If $k$ is even, a \textit{$k$-interval} is a connected component of $E(\alpha, k)$.
\end{defn}
Note (i) the intervals of $E(\alpha,k)$ and of $C(\beta, k)$ are each of length $M_k^{-(2+\alpha)}$,  (ii) the intervals of $E(\alpha,k)$ which do not contain  the origin are $M_k^{-2}$-separated, and (iii) the intervals of $C(\beta, k)$ are $M_k^{-(1+\beta)}$-separated. Therefore each $k$-interval which does not contain the origin is a union of at most one interval of $E(\alpha,k)$ and at most one interval of $C(\beta, k)$. Also, observe that if $\beta = 0$, each interval in $C(\beta, k)$ is in $E(\alpha, k)$, so a $k$-interval is just an interval of $E(\alpha, k)$ in this special case.

\begin{lem}\label{mukFIhat}
Recall that $F$ is a nonnegative Schwartz function supported on $\bT$. Let $I$ be a $k$-interval centered at $x_I $ such that $|x_I| \geq 10M_k^{-(2+\alpha)}$ and define 
\begin{equation*}
F_I(x) = F\left( \frac{1}{2|I|} (x-x_I) \right).
\end{equation*}
Then, for any $k \in \bN$,
\begin{equation*}
|\widehat{g_kF_I}(s)| \lesssim_N \log(M_k)M_k^{-2}(1+M_k^{-\beta})\left( 1+\frac{|s|}{M_k^{2+\alpha}} \right)^{-N}.
\end{equation*}
\end{lem}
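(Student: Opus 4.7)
The plan is to observe that $g_k F_I$ decomposes as a sum of at most two single rescaled bumps of width $\sim M_k^{-(2+\alpha)}$ supported in the window $\supp F_I$, and then to compute the Fourier transform of each by an affine change of variables that makes the Schwartz decay of $F$ manifest.

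First, I would observe that $\supp F_I \subseteq [x_I - |I|, x_I + |I|]$ has length at most $4 M_k^{-(2+\alpha)}$, since $|I| \leq 2 M_k^{-(2+\alpha)}$. The $E(\alpha, k)$-bump centers $r/p$ satisfy $|r_1/p_1 - r_2/p_2| \geq 1/(p_1 p_2) \geq 1/(4 M_k^2)$ for distinct pairs, which is $\gg M_k^{-(2+\alpha)}$, so at most one such bump is nonzero on $\supp F_I$. For odd $k$, the $C(\beta, k)$-bump centers $a/B_k$ are $1/B_k \geq M_k^{-(1+\beta)}/2$-separated, and since $\beta < 1 < 1 + \alpha$ at most one $C$-bump is active on $\supp F_I$. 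The hypothesis $|x_I| \geq 10 M_k^{-(2+\alpha)}$ rules out interaction with the bump at the origin. This gives a decomposition $g_k F_I = h_E + h_C$, with $h_C \equiv 0$ for even $k$ and with either term possibly zero if no corresponding bump meets $\supp F_I$.

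Next I would estimate $\widehat{h_E}$ and $\widehat{h_C}$ via rescaling. For $h_E$ corresponding to a center $r_0/p_0$, the change of variables $y = M_k^{1+\alpha}(p_0 x - r_0)$ turns the Fourier integral into
\[
\widehat{h_E}(s) = \frac{c_E}{p_0 M_k^{1+\alpha}} e^{-2\pi i s r_0/p_0} \int F(y)\, F(A + B y)\, e^{-2\pi i s y/(p_0 M_k^{1+\alpha})}\, dy,
\]
where $A = (r_0/p_0 - x_I)/(2|I|)$, $B = 1/(2|I|\, p_0 M_k^{1+\alpha})$, and $c_E$ is the coefficient of the relevant term in $g_k$. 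Using $M_k^{-(2+\alpha)} \leq |I| \leq 2 M_k^{-(2+\alpha)}$, $M_k < p_0 < 2M_k$, and $|r_0/p_0 - x_I| \leq 3 M_k^{-(2+\alpha)}/2$, the constants $A, B$ lie in bounded ranges independent of $M_k$, so the Schwartz function $y \mapsto F(y) F(A + By)$ has Fourier transform with $(1+|\xi|)^{-N}$-decay having implied constants depending only on $N$ and $F$. Since $c_E/(p_0 M_k^{1+\alpha}) \lesssim \log(M_k)/M_k^2$ in both the even and odd $k$ cases (using $|P_{M_k}| \gtrsim M_k/\log(M_k)$), and since the frequency factor $s/(p_0 M_k^{1+\alpha})$ is comparable to $s/M_k^{2+\alpha}$, I conclude $|\widehat{h_E}(s)| \lesssim_N \log(M_k)\, M_k^{-2} (1 + |s|/M_k^{2+\alpha})^{-N}$.

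An entirely analogous argument for $h_C$ (odd $k$), using the change of variables $y = M_k^{1+\alpha-\beta}(B_k x - a_0)$, gives overall coefficient $\frac{M_k^\beta(1+M_k^{-\beta})}{B_k(|P_{M_k}|+M_k^\beta+1)} \lesssim (1+M_k^{-\beta}) \log(M_k)/M_k^2$ and frequency scale $B_k M_k^{1+\alpha-\beta} \sim M_k^{2+\alpha}$, yielding $|\widehat{h_C}(s)| \lesssim_N (1+M_k^{-\beta}) \log(M_k)\, M_k^{-2} (1 + |s|/M_k^{2+\alpha})^{-N}$. Summing the two contributions gives the claimed bound. The hard part will be the bookkeeping: verifying that $A$, $B$ and their $C$-analogues are bounded uniformly in all parameters (a consequence of the geometry of $k$-intervals, the primes $p_0 \in (M_k, 2M_k)$, and $B_k \in [M_k^{1+\beta}, 2M_k^{1+\beta}]$), so that the Schwartz seminorms of $F \cdot F(A + B\,\cdot\,)$ are controlled by constants depending only on $N$ and $F$.
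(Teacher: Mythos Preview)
Your proposal is correct and follows essentially the same approach as the paper: localize $g_k F_I$ to at most a bounded number of single bumps (one from $E(\alpha,k)$ and, for odd $k$, one from $C(\beta,k)$), then use that each bump times $F_I$ is a Schwartz function at scale $M_k^{-(2+\alpha)}$ to obtain the stated Fourier decay. The paper states the Fourier bound for such bumps directly from their support size, while you make the rescaling explicit via the substitution $y = M_k^{1+\alpha}(p_0 x - r_0)$ and the tracking of $A,B$; this is a presentational, not a mathematical, difference.
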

\begin{proof}
For a $k$-interval $I$, let $2I$ be an interval of length $2|I|$ with the same center as $I$. The function $F_I$ is supported on an interval of length $2|I| \sim M_{k}^{-(2+\alpha)}$ and its support does not intersect a $k$-interval which contains the origin. Therefore, we have
\begin{equation*}
\sum_{p \in P_{M_k}} \sum_{|r| \leq (p-1)/2} F(M_k^{1+\alpha}(px-r))F_I(x) = \sum_{(p,r) \in P_{M_k}(I)} F(M_k^{1+\alpha}(px-r))F_I(x)
\end{equation*}
for some set $P_{M_k}(I)$ which contains $\lesssim 1$ pairs $(p,r)$. 
Likewise, for odd $k$, we have 
\begin{equation*}
\sum_{|a| \leq (B_k-1)/2} F(M_k^{\alpha-\beta+1}(B_kx-a))F_I(x) = \sum_{a \in B_k(I)}F(M_k^{\alpha-\beta+1}(B_kx-a))F_I(x) 
\end{equation*}
for some set $B_k(I)$ of size $\lesssim 1$.
Therefore, we have for even $k$ that
\[g_k F_I(x) = \frac{M_k^{1 + \alpha}}{|P_{M_k}|} \sum_{(p, r) \in P_{M_k}(I)} F(M_k^{1 + \alpha} (px - r)) F_I(x),\]
and we have for odd $k$ that
\begin{equation*}
\begin{split}
g_kF_I(x) = &\frac{M_k^{1+\alpha}}{|P_{M_k}|+M_k^\beta+1}\sum_{(p,r) \in P_{M_k}(I) } F(M_k^{1+\alpha}(px-r))F_I(x) \\
&+\frac{M_k^{1+\alpha}(1+M_k^{-\beta})}{|P_{M_k}|+M_k^\beta+1}\sum_{a \in B_k(I)} F(M_k^{\alpha-\beta+1}(B_kx-a))F_I(x).
\end{split}
\end{equation*}
The functions 
\begin{equation*}
F(M_k^{1+\alpha}(px-r))F_I(x) \qquad \text{and} \qquad F(M_k^{\alpha-\beta+1}(B_kx-a))F_I(x)
\end{equation*}
are Schwartz functions whose supports are contained in subintervals of $\bT$ of length $\sim M_k^{-(2+\alpha)}$. Thus, their Fourier coefficients at $s$ are bounded above by a constant multiple of
\begin{equation*}
M_{k}^{-(2+\alpha)} \left(1+ \frac{|s|}{M_{k}^{2+\alpha}} \right)^{-N}
\end{equation*}
where the implicit constant depends on $N$. We then obtain for even $k$ that
\[|\widehat{g_k F_I}(s)| \lesssim_N \frac{\log M_k}{M_k^2} \sum_{(p,r) \in P_{M_k}(I)} \left(1 + \frac{|s|}{M_k^{2 + \alpha}} \right)^{-N}, \]
and for odd $k$ that
\begin{equation*}
|\widehat{g_kF_I}(s)| \lesssim_N \frac{\log(M_k)}{M_k^2} \left( \sum_{(p,r) \in P_{M_k}(I)} \left(1+ \frac{|s|}{M_k^{2+\alpha}} \right)^{-N}+ (1+M_k^{-\beta})\sum_{a \in B_k(I)} \left(1+ \frac{|s|}{M_k^{2+\alpha}} \right)^{-N} \right).
\end{equation*}
Since the sizes of $P_{M_k}(I)$ and $B_k(I)$ are $\lesssim 1$, we establish the inequality.
\end{proof}
We will consider two kinds of $k$-intervals. Let $\mathcal{J}_2$ be the set of $k$-intervals $I$ such that $2I$ intersects $C(\beta,k)$ and let $\mathcal{J}_1$ be the remaining $k$-intervals. Of course, any $k$-interval in $\mathcal{J}_1$ will be an interval from $E(\alpha, k)$. Notice that if $k$ is even, then $\mathcal{J}_2$ is empty.

\begin{lem}\label{muI_kint}
    If $I$ is a $k$-interval in $\mathcal{J}_1$,
    \begin{equation}\label{muI_kint_1}
        \mu(I) \lesssim \log^3(M_k) M_k^{-2},
    \end{equation}
    and
    \begin{equation}\label{muI_kint_2}
        \mu(I) \lesssim \log^3(M_{k-1})M_{k-1}^\alpha (2 + M_{k-1}^{-\beta}) \log(M_k)M_k^{-2}.
    \end{equation}
    If $I$ is a $k$-interval in $\mathcal{J}_2$,
    \begin{equation}\label{muI_kint_3}
        \mu(I) \lesssim \log^3(M_k) M_k^{-2} (2 +M_k^{-\beta}).
    \end{equation}
\end{lem}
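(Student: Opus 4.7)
Proof plan. The general strategy is to dominate $\mu(I)$ by an integral of a smooth cutoff, pass to the limit via weak convergence $\mu_l \to \mu$, and then apply Plancherel on $\bT$ combined with Lemma \ref{mukFIhat} and the bounds on $\widehat{\mu_{l,k}}$ inherited from Lemma \ref{mu_induction} together with \eqref{mukleq2}--\eqref{mukleq}. Setting $F_I(x) = F\bigl((x-x_I)/(2|I|)\bigr)$, the hypothesis $|F| \gtrsim 1$ on $[-1/4,1/4]$ gives $F_I \gtrsim \mathbf{1}_I$ pointwise, so $\mu(I) \lesssim \int F_I\,d\mu$; using $\mu_l = g_k\,\mu_{l,k}$ as densities,
$$\mu(I) \;\lesssim\; \lim_{l \to \infty} \int F_I g_k\, d\mu_{l,k} \;=\; \lim_{l \to \infty} \sum_{s \in \bZ} \widehat{F_I g_k}(s)\,\overline{\widehat{\mu_{l,k}}(s)}.$$

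For \eqref{muI_kint_1} and \eqref{muI_kint_3}, I would estimate the Fourier sum by splitting $s \in \bZ$ into four ranges: (i) $s = 0$, giving the main term $\lesssim c\log(M_k) M_k^{-2}$ by Lemma \ref{mukFIhat} and $|\widehat{\mu_{l,k}}(0)| \leq 2$, with $c = 1$ when $I \in \mathcal{J}_1$ and $c = 1 + M_k^{-\beta}$ when $I \in \mathcal{J}_2$; (ii) $1 \leq |s| \leq \tfrac12\min(M_{k-1}, M_{k-1}^{1+\beta})$, using the trivial bound $|\widehat{\mu_{l,k}}(s)| \leq 2$ and the modest number of integers in this range; (iii) $\tfrac12\min(M_{k-1}, M_{k-1}^{1+\beta}) \leq |s| \leq 2M_{k-1}^{2+\alpha}$, where iterating \eqref{mukleq} across $j > k$ identifies $\widehat{\mu_{l,k}}(s)$ with $\widehat{\mu_{k-1}}(s)$ up to a negligible error and \eqref{convgMres2} is invoked; and (iv) $|s| \geq 2M_{k-1}^{2+\alpha}$, handled via \eqref{convgMres3} for $|s| \leq \tfrac12\min(M_{k+1}, M_{k+1}^{1+\beta})$ and via the $(1+|s|/M_k^{2+\alpha})^{-N}$ factor from Lemma \ref{mukFIhat} for $|s| \geq 2M_k^{2+\alpha}$. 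The key simplification is that the growth conditions \eqref{Mcond4}--\eqref{Mcond5} force every polynomial-in-$M_{k-1}$ factor that appears in (ii)--(iv) to be dominated by an arbitrarily small positive power of $\log M_k$, so the total Fourier sum is $\lesssim c\log^3(M_k) M_k^{-2}$, which yields \eqref{muI_kint_1} and \eqref{muI_kint_3}.

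For \eqref{muI_kint_2}, in which the dependence on $M_{k-1}$ must be kept explicit, I would instead use the pointwise factorization
$$\mu(I) \;\leq\; \|g_k\|_{L^\infty(I)} \cdot \mu^{(k)}(I), \qquad \mu^{(k)} := \lim_{l \to \infty} \mu_{l,k}.$$
On a $\mathcal{J}_1$-interval only the $E(\alpha,k)$-piece of $g_k$ is supported in $I$, giving $\|g_k\|_{L^\infty(I)} \lesssim M_k^\alpha \log M_k$, and the problem reduces to bounding $\mu^{(k)}(I)$. I would do this by repeating the Fourier analysis above for $\int F_I\,d\mu_{l,k}$ (now with $\widehat{F_I}(s)$ in place of $\widehat{F_I g_k}(s)$, so the prefactor is $M_k^{-(2+\alpha)}$ rather than $\log(M_k) M_k^{-2}$) and this time keeping the range-(iii) contribution explicit rather than absorbing it. The main obstacle is this range-(iii) estimate: a naive bound (number of integers in the range) $\times$ (pointwise bound on $|\widehat{\mu_{l,k}}(s)|$) overshoots the target by a factor of $M_{k-1}$, so one must exploit the decay factor $(1+|s|/M_k^{2+\alpha})^{-N}$ from Lemma \ref{mukFIhat} together with the sharper estimate \eqref{convgMres3} at the high end of the range, and track the parity-dependent arithmetic structure of $\widehat{g_{k-1}}$, in order to recover the correct factor $M_{k-1}^\alpha(2+M_{k-1}^{-\beta})\log^3(M_{k-1})$; multiplying by $\|g_k\|_{L^\infty(I)}$ then produces \eqref{muI_kint_2}.
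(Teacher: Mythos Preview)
Your approach to \eqref{muI_kint_1} and \eqref{muI_kint_3} is genuinely different from the paper's but should work. The paper does \emph{not} estimate the full Fourier sum $\sum_s \widehat{g_kF_I}(s)\overline{\widehat{\mu_{l,k}}(s)}$ range by range. Instead it bounds the main term $\mu_k(2I)$ \emph{spatially}, using the elementary pointwise inequality $g_1\cdots g_k(x)\le \log^3(M_k)M_k^{\alpha}$ (or the $\mathcal J_2$-variant) on $2I$; the Fourier machinery (Lemma \ref{mukFIhat}, \eqref{mukleq2}, \eqref{mukleq}) is used only to show that the \emph{difference} $\mu_l(F_I)-\mu_k(F_I)$ is $O(M_k^{-80(2+\alpha)})$. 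Your route works for \eqref{muI_kint_1} and \eqref{muI_kint_3} because every polynomial in $M_{k-1}$ that arises in ranges (ii)--(iv) can be swallowed by $\log(M_k)$ via \eqref{Mcond4}.

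For \eqref{muI_kint_2}, however, there is a genuine gap. You correctly identify that the naive range-(iii) bound overshoots by a factor of $M_{k-1}$ (more precisely $M_{k-1}^{1+\beta}$ if $\beta\ge0$), but none of the three fixes you propose can recover it. The decay factor $(1+|s|/M_k^{2+\alpha})^{-N}$ is $\sim 1$ throughout range (iii) because $|s|\le 2M_{k-1}^{2+\alpha}\ll M_k^{2+\alpha}$; estimate \eqref{convgMres3} only applies for $|s|\ge 2M_{k-1}^{2+\alpha}$, i.e.\ \emph{outside} range (iii); and ``tracking the arithmetic structure of $\widehat{g_{k-1}}$'' does not help either: a direct computation shows $\sum_{|s|\le 2M_{k-1}^{2+\alpha}}|\widehat{g_{k-1}}(s)|\gtrsim M_{k-1}^{1+\alpha}$ (the prime part alone already gives this), and $\widehat{\mu_{k-1}}$ is a smearing of $\widehat{g_{k-1}}$ at the much smaller scale $M_{k-2}^{2+\alpha}$, so $\sum_{|s|\le 2M_{k-1}^{2+\alpha}}|\widehat{\mu_{k-1}}(s)|$ is of the same order. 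The target for $\mu^{(k)}(I)$ is $M_{k-1}^{\alpha}(2+M_{k-1}^{-\beta})\log^3(M_{k-1})\,M_k^{-(2+\alpha)}$, so after multiplying by the prefactor $M_k^{-(2+\alpha)}$ you are stuck a factor of $M_{k-1}$ too large.

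The point is that taking absolute values inside $\sum_s \widehat{F_I}(s)\overline{\widehat{\mu_{k-1}}(s)}$ destroys the cancellation that makes $\int F_I\,d\mu_{k-1}$ small. The paper's cure is exactly to avoid this: bound $\mu_{k-1}(F_I)$ (equivalently $\mu_k(2I)$) by the pointwise estimate $g_1\cdots g_{k-1}\lesssim \log^3(M_{k-1})M_{k-1}^{\alpha}(2+M_{k-1}^{-\beta})$ together with $|2I|\sim M_k^{-(2+\alpha)}$, and reserve the Fourier argument for the tiny difference $\mu_l(F_I)-\mu_k(F_I)$. Your own factorization $\mu(I)\le\|g_k\|_{L^\infty(I)}\,\mu^{(k)}(I)$ is one step in this direction; to close the argument you should bound $\mu^{(k)}(I)$ by $\mu_{k-1}(F_I')$ (spatially) plus a Fourier-controlled difference, not by an absolute-value Fourier sum.
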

\begin{proof} We consider $\mu_k$ first.
\paragraph{\textbf{Estimate of $\mu_k$ for $k$-intervals in $\mathcal{J}_1$}} Let $I \in \mathcal{J}_1$. We will find an upper bound on $\mu_k(2I)$. Observe that $\mu_k(2I)$ is given by the integral
\[\mu_k(2I)  = \int F_0(x) g_1(x) \cdots g_k(x) \mathbf{1}_{2I}(x) \, dx.\]
The definition of $g_k$ and the fact that $F_0$ is supported away from the origin guarantee that on the support of $F_0$ for even values of $k$ we have the estimate
\[g_k(x)  \leq \frac{C_0 M_k^{1 + \alpha}}{|P_{M_k}|}.\]
For odd values of $k$, $2I$ does not intersect $C(\beta,k)$ since $I \in \mathcal{J}_1$. Thus, the definition of $g_k$ and the support of $F_0$ guarantee that we have the estimate
\begin{equation*}
g_k(x) \leq \frac{C_0 M_k^{1 + \alpha}}{|P_{M_k}| + M_k^{\beta} + 1} \qquad \text{for} \ x \in 2I.
\end{equation*}
In either case, we have the estimate
\begin{equation}\label{g_konI}
g_k(x) \leq C_0 D \log(M_k) M_k^{\alpha} \qquad \text{for} \ x \in 2I.
\end{equation}
From \eqref{Mcond5}, we get $C_0 D \leq \log(M_k)$. Therefore,
\begin{equation*}
g_k(x) \leq \log^2(M_k) M_k^{\alpha} \leq M_k^{1+\alpha},
\end{equation*}
and for $1 \leq j \leq k-1$, we have the weaker estimate
\begin{equation*}
    g_j(x) \leq C_0 D \log (M_j) M_j^{\alpha} (2 + M_j^{-\beta}) \leq \log^2(M_j) M_j^{\alpha} (2 + M_j^{-\beta}).
\end{equation*}
By \eqref{Mcond4}, $\log(M_k) M_k^{1+\alpha} \leq \log(M_{k+1})$. For $x \in 2I$, we get
\begin{equation*}
\begin{split}
g_1 \cdots g_k(x) &\leq \left(\prod_{j=1}^{k-1} \log^2(M_j) M_j^{\alpha} (2 + M_j^{-\beta}) \right) \log^2(M_k) M_k^\alpha\\
&\leq \log(M_2) \left(\prod_{j=2}^{k-1} \log^2(M_j) M_j^{\alpha} (2 + M_j^{-\beta}) \right) \log^2(M_k) M_k^\alpha\\
&\leq \log(M_3) \left(\prod_{j=3}^k \log^2(M_j) M_j^{\alpha} (2 + M_j^{-\beta})\right) \log^2(M_k) M_k^\alpha\\
&\qquad \vdots\\
&\leq \log^3(M_{k})M_k^\alpha.
\end{split}
\end{equation*}
Thus, we obtain
\[\mu_k(2I) \lesssim \log^3(M_k) M_k^\alpha  \int \mathbf{1}_{2I}(x) \, dx.\]
Since $k$-intervals have Lebesgue measure $\sim M_k^{-(2 + \alpha)}$, we have established the bound
\begin{equation}\label{mu_k_kint_epsilon}
\mu_k(2I) \lesssim \log^3(M_k) M_k^{-2}.
\end{equation}
Also, by truncating the above argument one step earlier and using \eqref{g_konI}, we have that
\begin{equation}\label{mu_k_kint}
\mu_k(2I) \lesssim \int (g_1 \cdots g_{k-1})(x) g_k(x) \mathbf{1}_{2I}(x) dx  \lesssim \log^3(M_{k-1}) M_{k-1}^{\alpha} (2 + M_{k-1}^{-\beta}) \log(M_{k})M_{k}^{-2}.
\end{equation}

\paragraph{\textbf{Estimate of $\mu_k$ for $k$-intervals in $\mathcal{J}_2$}} Now, let $I \in \mathcal{J}_2$. We will find an upper bound on $\mu_k(2I)$. Since $\mathcal{J}_2$ is nonempty, $k$ must be odd. Instead of \eqref{g_konI}, we have
\begin{equation*}
g_k(x) \leq  C_0 D \log(M_k) M_k^{\alpha}(2+M_k^{-\beta}) \qquad \text{for} \ x \in 2I.
\end{equation*}
Similarly, we get
\begin{equation*}
    g_1\cdots g_k(x) \leq \log^3(M_k)M_k^\alpha (2+M_k^{-\beta}) \qquad \text{for} \ x \in 2I. 
\end{equation*}
Therefore,
\begin{equation}\label{mu_k_kint_2}
    \mu_k(2I) \lesssim \log^3(M_k)M_k^{-2}(2+M_k^{-\beta}).
\end{equation}

\paragraph{\textbf{Estimate of $\mu$ for $k$-intervals}} 
Let $I$ be a $k$-interval. 
We seek to estimate $\mu_l(I)$ for $l > k$. Let $F_I$ be the nonnegative Schwartz function defined in Lemma \ref{mukFIhat} and write 
\begin{equation*}
\mu_l(F_I) := \int F_I \mu_l dx. 
\end{equation*}
We apply Plancherel's theorem to write 
\begin{equation*}
\mu_l(F_I) = \sum_{s \in \bZ} \widehat{g_kF_I}(s) (g_{l} \cdots g_{k+1} \mu_{k-1})^\wedge(s) \qquad \text{and} \qquad \mu_k(F_I) = \sum_{s\in \bZ} \widehat{g_kF_I}(s) \widehat{\mu}_{k-1}(s).
\end{equation*}
Let us consider
\begin{equation*}
\mu_l(F_I) - \mu_k(F_I) = \sum_{s \in \bZ} \widehat{g_kF_I}(s) ((g_l \cdots g_{k+1}\mu_{k-1})^\wedge (s) - \widehat{\mu}_{k-1}(s)) .
\end{equation*}
If $|s| \geq M_k^{10(2+\alpha)}$, we use the the decay of $\widehat{g_kF_I}$. By \eqref{convgMres0} and \eqref{mukleq2} we have 
\begin{equation*}
|(g_l \cdots g_{k+1}\mu_{k-1})^\wedge (s)- \widehat{\mu}_{k-1}(s)| \lesssim 1.
\end{equation*}
By Lemma \ref{mukFIhat} with $N=10$, we obtain that
\begin{equation*}
\begin{split}
\sum_{|s| \geq M_k^{10(2+\alpha)}} &|\widehat{g_k F_I}(s) ((g_l \cdots g_{k+1} \mu_{k-1})^\wedge (s) -\widehat{\mu}_{k-1}(s))|\\
&\lesssim \log(M_k)M_k^{-2}(1+M_k^{-\beta})\sum_{|s| \geq M_k^{10(2+\alpha)}}   \left( \frac{M_k^{2+\alpha}}{|s|} \right)^{10}\lesssim M_k^{-80(2+\alpha)}.
\end{split}
\end{equation*}
If $|s| \leq M_k^{10(2+\alpha)}$, we use the estimate for $(g_l \cdots g_{k+1}g_{k-1} \cdots g_1 F_0)^\wedge(s)$. Since $M_k^{10(2+\alpha)} \leq \min (M_{k+1}, M_{k+1}^{1+\beta})/2$, we use \eqref{mukleq} to obtain
\begin{equation*}
\begin{split}
|(g_l \cdots g_{k+1}\mu_{k-1})^\wedge(s)-\widehat{\mu}_{k-1}(s)| &\leq  \sum_{j=k+1}^l \log(M_j)M_j^{-96}\\
&\lesssim M_{k+1}^{-90}.
\end{split}
\end{equation*}
By Lemma \ref{mukFIhat} with $N=0$,
\begin{equation*}
\begin{split}
\sum_{ |s| \leq M_k^{10(2+\alpha)}} &|\widehat{g_k F_I}(s) ((g_l \cdots g_{k+1} \mu_{k-1} )^\wedge (s) - \widehat{\mu_{k-1}}(s))|\\
&\lesssim \log(M_k)M_k^{-2+10(2+\alpha)}(1+M_k^{-\beta})M_{k+1}^{-90} \leq M_k^{-80(2+\alpha)} .
\end{split}
\end{equation*}
Combining the two cases, we have $\mu_l(F_I) \lesssim \mu_k(F_I) +M_k^{-80(2+\alpha)}$. Since $F_I$ is supported on $2I$, we have $\mu_k(F_{I}) \lesssim \mu_k(2I)$. Thus, we get
\begin{equation}\label{mu_reg_lk}
\mu_l(F_I)  \lesssim \mu_k(2I) +M_k^{-80(2+\alpha)}.
\end{equation}
Because $F_I$ is a continuous function and $\mu_l$ converges weakly to $\mu$, we conclude that $\mu(F_I) \lesssim \mu_k(2I) + M_k^{-80(2 + \alpha)}$. Furthermore, since $1 \lesssim F_I$ on $I$, we conclude that $\mu(I) \lesssim \mu(F_I)$. So we can conclude 
\begin{equation}\label{mu_reg_limit}
\mu(I) \lesssim \mu_k(2I) + M_k^{-80(2+\alpha)}.
\end{equation}
Combining \eqref{mu_reg_limit} with \eqref{mu_k_kint_epsilon}, \eqref{mu_k_kint} and \eqref{mu_k_kint_2} implies the estimates \eqref{muI_kint_1}, \eqref{muI_kint_2} and  \eqref{muI_kint_3} respectively.

\end{proof}
Now we are ready to prove the regularity of the measure $\mu$.
Recall that $\mu$ is the weak limit of a subsequence of the measures $\mu_k$.

\begin{prop}\label{3_lem_mu_reg}
For any interval $I \subseteq \bT$, if $0 \leq \beta <1$, 
\begin{equation*}
\mu(I) \lesssim \log^3(|I|^{-1}) |I|^{\frac{2}{2+\alpha}}.
\end{equation*}
If $-1 < \beta <0$, 
\begin{equation*}
\mu(I) \lesssim \log^3(|I|^{-1}) |I|^{\frac{2+\beta}{2+\alpha}}.
\end{equation*}
\end{prop}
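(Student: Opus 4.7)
The plan is to deduce the regularity of $\mu$ on a general interval $I \subseteq \bT$ from the per-$k$-interval mass bounds in Lemma \ref{muI_kint}, by selecting the scale $k$ according to $|I|$ and summing $\mu(J)$ over $k$-intervals $J$ intersecting $I$. For $|I|$ bounded below by an absolute constant the conclusion follows from $\mu$ being a probability measure, so I assume $|I|$ is small and let $k \ge 1$ be the unique integer with $M_k^{-(2+\alpha)} \leq |I| < M_{k-1}^{-(2+\alpha)}$. The argument then splits into two sub-cases according to whether $|I| \leq M_k^{-2}$ or $|I| > M_k^{-2}$.

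In the first sub-case $|I| \leq M_k^{-2}$, the separations $M_k^{-2}$ of the $\mathcal{J}_1$-type $k$-intervals and $M_k^{-(1+\beta)}$ of the $\mathcal{J}_2$-type $k$-intervals (both $\geq M_k^{-2}$ since $\beta < 1$) force $I$ to meet only $O(1)$ $k$-intervals in total. Summing \eqref{muI_kint_1} and \eqref{muI_kint_3} yields $\mu(I) \lesssim \log^3(M_k) M_k^{-2}(1+M_k^{-\beta})$. Writing $\gamma = 2/(2+\alpha)$ for $\beta \geq 0$ and $\gamma = (2+\beta)/(2+\alpha)$ for $\beta < 0$, the lower bound $|I| \geq M_k^{-(2+\alpha)}$ gives $M_k^{-(2+\min(\beta,0))} \leq |I|^\gamma$ while the upper bound $|I| \leq M_k^{-2}$ gives $\log|I|^{-1} \geq 2\log M_k$, and the claimed estimate follows.

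In the second sub-case $M_k^{-2} < |I| < M_{k-1}^{-(2+\alpha)}$, $I$ may meet many $k$-intervals but is still contained in $O(1)$ $(k-1)$-intervals. I would bound the count of $\mathcal{J}_1$-type centers $r/p$ with $p \in P_{M_k}$ in $I$ by $\sum_{p \in P_{M_k}}(|I|p + 1) \lesssim |I| M_k^2/\log M_k + M_k/\log M_k$ using the Chebyshev upper bound $|P_{M_k}| \lesssim M_k/\log M_k$, and bound the count of $\mathcal{J}_2$-type centers by $|I| M_k^{1+\beta} + 1$ via their $M_k^{-(1+\beta)}$ separation. Combining these counts with the tighter per-interval estimate \eqref{muI_kint_2} for $\mathcal{J}_1$ (and the analogous bound for $\mathcal{J}_2$-type intervals obtained by re-running the truncation argument: on a $\mathcal{J}_2$-type interval, $\int g_k \mathbf{1}_{2I} dx \lesssim \log(M_k) M_k^{-(2+\min(\beta,0))}$ by direct computation, multiplied by the telescoping bound $\|g_1 \cdots g_{k-1}\|_{L^\infty(2I)} \leq \log^3(M_{k-1}) M_{k-1}^\alpha(2+M_{k-1}^{-\beta})$), the $\log(M_k)$ and $M_k^{\pm 2}$ factors cancel and I arrive at
\[\mu(I) \lesssim |I| \log^3(M_{k-1}) M_{k-1}^\alpha(2+M_{k-1}^{-\beta}) + \text{(negligible terms)}.\]
Using $|I|^{1-\gamma} \leq M_{k-1}^{-(2+\alpha)(1-\gamma)} = M_{k-1}^{-\alpha+\min(\beta,0)}$ and $\log|I|^{-1} \geq (2+\alpha)\log M_{k-1}$ then produces $\mu(I) \lesssim \log^3(|I|^{-1}) |I|^\gamma$.

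The main obstacle is the second sub-case: either replacing the density-based count with the Farey upper bound $|I| M_k^2$ (omitting the $1/\log M_k$ factor), or replacing \eqref{muI_kint_2} by the coarser \eqref{muI_kint_1}, produces an extra $\log M_k$ that, for $|I|$ close to $M_{k-1}^{-(2+\alpha)}$, overwhelms the available $\log|I|^{-1}$ (since $\log M_k$ is enormous compared to $\log M_{k-1}$ by \eqref{Mcond4}). Both improvements are needed simultaneously, together with the $\mathcal{J}_2$-analog of \eqref{muI_kint_2}, since Lemma \ref{muI_kint} only states such a bound explicitly for $\mathcal{J}_1$.
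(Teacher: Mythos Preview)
Your overall strategy is the same as the paper's, but there is a genuine gap: your threshold for splitting the two sub-cases is placed at $M_k^{-2}$, whereas it needs to be at $M_k^{-1}$.

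Concretely, consider the range $M_k^{-2} < |I| \leq M_k^{-1}$ (and say $0 \leq \beta < 1$, $\alpha<2$). This falls into your second sub-case, where your $\mathcal{J}_1$ count is $\sum_{p \in P_{M_k}}(|I|p+1)$. For $|I|$ just above $M_k^{-2}$ one has $|I|p \leq 2|I|M_k \leq 2M_k^{-1} \ll 1$, so the sum is dominated by the ``$+1$'' terms and equals $|P_{M_k}| \sim M_k/\log M_k$. Multiplying by the per-interval bound \eqref{muI_kint_2} gives a contribution
\[
\frac{M_k}{\log M_k}\cdot \log^3(M_{k-1})\,M_{k-1}^{\alpha}(2+M_{k-1}^{-\beta})\,\log(M_k)\,M_k^{-2}
\;\sim\; M_k^{-1}\cdot \log^3(M_{k-1})\,M_{k-1}^{\alpha}(2+M_{k-1}^{-\beta}).
\]
This is \emph{not} negligible: by \eqref{Mcond4} it is $\lesssim M_k^{-1}\log M_k$, while for $|I|\sim M_k^{-2}$ the target $\log^3(|I|^{-1})|I|^{2/(2+\alpha)}$ is $\sim \log^3(M_k)\,M_k^{-4/(2+\alpha)}$. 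The required inequality $M_k^{-1}\log M_k \lesssim \log^3(M_k)\,M_k^{-4/(2+\alpha)}$ is equivalent to $M_k^{(2-\alpha)/(2+\alpha)} \lesssim \log^2 M_k$, which fails whenever $\alpha<2$. (Note also that for $|I|\sim M_k^{-2}$ the main term $|I|\cdot\log^3(M_{k-1})M_{k-1}^{\alpha}(2+M_{k-1}^{-\beta})$ is $M_k$ times \emph{smaller} than this ``negligible'' term.)

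The paper fixes this by splitting at $M_k^{-1}$. In the small-$|I|$ case $|I|\leq M_k^{-1}$ it does \emph{not} assert that only $O(1)$ $k$-intervals meet $I$; instead it uses the $M_k^{-2}$-separation to bound the number of $\mathcal{J}_1$-intervals by $\max(3|I|M_k^2,1)$ and pairs this with \eqref{muI_kint_1} to get $\sum_{\mathcal{J}_1}\mu(J)\lesssim \log^3(M_k)\max(|I|,M_k^{-2})$, which is $\leq \log^3(|I|^{-1})|I|^{2/(2+\alpha)}$ throughout this range. In the large-$|I|$ case $|I|\geq M_k^{-1}$, one has $|I|p\gtrsim 1$ for every $p\in P_{M_k}$, so the ``$+1$'' terms are genuinely absorbed, and your argument with \eqref{muI_kint_2} goes through. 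Finally, for $\mathcal{J}_2$ the paper simply uses \eqref{muI_kint_3} (no analog of \eqref{muI_kint_2} is needed): the resulting exponent of $|I|$ is strictly larger than $\gamma$, and this slack absorbs the $\log^3(M_k)$ factor.
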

\begin{proof}
Assume that $I$ is an arbitrary interval satisfying 
\[M_k^{-(2 + \alpha)} \leq |I| \leq M_{k-1}^{-(2 + \alpha)}.\]
We will estimate $\mu(I)$ by counting the number of $k$-intervals that intersect $I$. We have the decomposition
\[\mu(I) \leq \sum_{J \in \mathcal{J}_1} \mu(J) + \sum_{J \in \mathcal{J}_2} \mu(J).\]
Observe that the sum over $J \in \mathcal{J}_2$ is empty if $k$ is even. 

We will first estimate the sum over those $J \in \mathcal{J}_1$. The number of $k$-intervals $J \in \mathcal{J}_1$ intersecting $I$ can be estimated in a manner similar to that of Papadimitropoulos \cite{pa-thesis}. We will count the rational numbers $\frac{r}{p}$ contained in the interval $3I$ where $p \in P_{M_j}$. Because each $k$-interval intersecting $I$ must have its center contained in $3I$, this will give an upper bound on the number of $k$-intervals intersecting $I$.

We will split into two cases based on whether the interval $I$ is larger than $M_k^{-1}$. 
\paragraph*{Case 1: $M_k^{-(2 + \alpha)} \leq |I| \leq M_k^{-1}$} Since the rational numbers in $\{r/p : p \in P_{M_k}\}$ are easily seen to be $M_k^{-2}$-separated, the number of rational numbers of this form contained in $3I$ is bounded above by $\max(3|I| M_k^{2}, 1)$. Since each such $k$ interval has $\mu$-measure at most a constant times $\log^3(M_k) M_k^{-2}$ by \eqref{muI_kint_1}, we obtain the following bound:
\begin{equation}\label{muIest2}
\sum_{J \in \mathcal{J}_1}\mu(J) \lesssim \log^3(M_k) \max(  |I|, M_k^{-2}).
\end{equation}
We will now show that this implies the desired regularity estimate for $\sum_{J \in \mathcal{J}_1} \mu(J)$. If the maximum on the right side of \eqref{muIest2} is realized by $\log^3(M_k)|I|$, then we use the inequality $|I| \leq M_k^{-1}$ and conclude
\begin{equation*}
\sum_{J \in \mathcal{J}_1}\mu(J) \lesssim\log^3(|I|^{-1})|I| \leq \log^3(|I|^{-1})|I|^{\frac{2}{2+\alpha}}.
\end{equation*}
If the maximum in \eqref{muIest2} is realized by $\log^3(M_k)M_k^{-2}$, we use the inequality $ M_k^{-(2+\alpha)} \leq |I| $ to conclude
\begin{equation*}
\sum_{J \in \mathcal{J}_1}\mu(J) \lesssim \log^3(|I|^{-1})|I|^{\frac{2}{2+\alpha}}.    
\end{equation*}
This completes the proof of the regularity estimate for $\sum_{J \in \mathcal{J}_1}\mu(J)$ when $|I|$ is small.
\paragraph*{Case 2: $M_k^{-1} \leq |I| \leq M_{k-1}^{-(2 + \alpha)}$} We will count the number of rational numbers of the form $\{r/p : p \in P_{M_k}\}$ contained in $3I$ in a different way. For some fixed $p_0 \in P_{M_k}$, the total number of rational numbers $r/p_0$ contained in $3I$ is at most $\lesssim M_k |I|$. We get that the total number of rational numbers $\{r/p : p \in P_{M_k}\}$ is bounded above by a constant times $|I| |P_{M_k}| M_k$. Using \eqref{muI_kint_2} and $|P_{M_k}| \sim \frac{M_k}{\log M_k}$, we obtain the following bound:
\begin{equation}\label{muIest3}
\begin{split}
\sum_{J \in \mathcal{J}_1}\mu(J) &\lesssim \log^3(M_{k-1}) M_{k-1}^\alpha (2 + M_{k-1}^{-\beta}) |P_{M_k}| \log (M_k) M_k^{-1} |I|\\
&\lesssim \log^3(M_{k-1}) M_{k-1}^{\alpha} (2 + M_{k-1}^{-\beta}) |I|.
\end{split}
\end{equation}
Combining \eqref{muIest3} with the inequality $ |I| \leq M_{k-1}^{-(2 + \alpha)}$, we conclude
\begin{equation*}
\sum_{J \in \mathcal{J}_1}\mu(J) \lesssim \log^3(|I|^{-1}) (|I|^{\frac{2}{2 + \alpha}} + |I|^{\frac{2 + \beta}{2 + \alpha}}) .    
\end{equation*}

It remains to control the contribution of those $J \in \mathcal{J}_2$. Note that this step is only necessary if $k$ is odd. If a $k$-interval $J \in \mathcal{J}_2$ intersects $I$, it follows that the subinterval from $C(\beta, k)$ which intersects $2J$ will be contained in $6I$ and for each subinterval from $C(\beta,k)$, it can intersect at most $2$ $k$-intervals $J \in \mathcal{J}_2$. So, it is sufficient to count the number of intervals $C(\beta, k)$ in $6I$. 

To count the number of intervals of $C(\beta, k)$ in $6I$, we use the $M^{-(1 + \beta)}$-separation of these intervals. This separation implies that the number of such intervals in $6I$ is at most a constant times $\max( M_k^{1 + \beta}|I|, 1)$. Since the $\mu$-measure of each $k$-interval is bounded above by a constant times $\log^3(M_k) M_k^{-2}(2+M_k^{-\beta})$ by \eqref{muI_kint_3}, we obtain
\begin{equation}\label{J_2sum}
\begin{split}
    \sum_{J \in \mathcal{J}_2} \mu(J) &\lesssim \log^3(M_k)M_k^{-2}(2+M_k^{-\beta}) \max( M_k^{1 + \beta}|I|, 1).
\end{split}
\end{equation}
If the maximum on the right side of \eqref{J_2sum} is realized by $M_K^{1+\beta}|I|$, by the inequality $M_k^{-(2+\alpha)} \leq |I|$, we get
\begin{equation*}
\begin{split}
    \sum_{J \in \mathcal{J}_2} \mu(J) &\lesssim \log^3(|I|^{-1}) (|I|^{\frac{3+\alpha-\beta}{2+\alpha} } + |I|^{\frac{3+\alpha}{2+\alpha}})\\
    &\lesssim \log^3(|I|^{-1})(|I|^{\frac{2}{2+\alpha}} + |I|^{\frac{2+\beta}{2+\alpha}}).
\end{split}
\end{equation*}
If the maximum on the right side of \eqref{J_2sum} is realized by $1$, by the inequality $M_k^{-(2+\alpha)} \leq |I|$, we conclude
\begin{equation*}
    \sum_{J \in \mathcal{J}_2} \mu(J) \lesssim \log^3(|I|^{-1}) (|I|^{\frac{2}{2+\alpha} } + |I|^{\frac{2+\beta}{2+\alpha}}).
\end{equation*}

Hence $\mu(I) \lesssim \log^3(|I|^{-1}) |I|^{\frac{2}{2+\alpha}}$ if $0 \leq \beta <1$ and $\mu(I) \lesssim \log^3(|I|^{-1}) |I|^{\frac{2+\beta}{2+\alpha}}$ if $-1 < \beta < 0$, as desired.
\end{proof}

\section{Failure of the extension estimate}\label{failure_sec}

In this section, we define a sequence of functions $f_k$ which witness the failure of the estimate (R). More specifically, we shall prove \eqref{LqLpest2} if $\beta \geq 0$ and \eqref{LqLpest2_2} if $\beta < 0$.  
This will complete the proofs of Theorem \ref{main-result} and Theorem \ref{main-result2}. 

\subsection{Definitions and Lemmas}

We define a sequence of functions $\{f_k\}_{\substack{k \in \bN \\ k \text{ odd}}}$ by
\begin{equation*}
f_k(x) = \sum_{|a| \leq (B_k-1)/2} F(M_k^{\alpha-\beta+1}(B_kx-a)).
\end{equation*}
To obtain a lower bound of $\widehat{f_k \mu_l} (s)$ for $l >k$, we write
\begin{equation*}
\widehat{f_k\mu_l}(s) = \sum_{t \in \bZ} \widehat{f_kg_k}(s-t) (g_l \cdots g_{k+1}\mu_{k-1})^\wedge(t)
\end{equation*}
and split $f_kg_k$ into two functions. Let us consider
\begin{equation*}
h_1(x) = \frac{M_k^{1+\alpha}(1+M_k^{-\beta})}{|P_{M_k}|+M_k^\beta+1} \sum_{|a| \leq (B_k-1)/2} F^2(M_k^{\alpha-\beta+1}(B_kx-a)) 
\end{equation*}
and
\begin{equation*}
h_2(x) =  \frac{M_k^{1+\alpha}}{|P_{M_k}|+M_k^\beta+1} \sum_{p \in P_{M_k}} \sum_{|r| \leq (p-1)/2} F(M_k^{1+\alpha}(px-r))\sum_{|a| \leq (B_k-1)/2}F(M_k^{\alpha-\beta+1}(B_kx-a)).
\end{equation*}
Then,  $g_kf_k = h_1+h_2$ and we have
\begin{equation*}
\widehat{h}_1(s) = \frac{1+M_k^\beta}{|P_{M_k}| +M_k^\beta + 1} \widehat{F^2} \left( \frac{s}{B_kM_k^{1+\alpha-\beta}} \right) \mathbf{1}_{B_k\bZ}(s)
\end{equation*}
and
\begin{equation*}
\widehat{h}_2(s) = \frac{M_k^{\beta-\alpha-1}}{|P_{M_k}| +M_k^\beta + 1} \sum_{p \in P_{M_k}} \sum_{t \in \bZ} \widehat{F} \left( \frac{t}{B_kM_k^{1+\alpha-\beta}} \right)\widehat{F} \left( \frac{s-t}{pM_k^{1+\alpha}}\right) \mathbf{1}_{B_k\bZ}(t) \mathbf{1}_{p\bZ}(s-t).
\end{equation*}

We will use the following estimate on $\widehat{h}_2(s)$.
\begin{lem}\label{FDh_2}
    If $\beta \neq 0$,
    \begin{equation}\label{FDh_2eq1}
        |\widehat{h}_2(s)| \lesssim \frac{1+M_k^{\beta-\alpha}}{M_k} \left(1 + \frac{|s|}{M_k^{2+\alpha}} \right)^{-10}.
    \end{equation}
\end{lem}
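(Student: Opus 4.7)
The plan is to exploit two structural facts: the coprimality of $B_k$ to each $p \in P_{M_k}$ (arranged in the setup precisely because $\beta \neq 0$), and the Schwartz decay of $\widehat{F}$. I would work factor by factor in the explicit formula for $\widehat{h}_2(s)$ displayed just before the lemma.

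First, I would fix $p \in P_{M_k}$ and identify the set of integers $t$ for which the indicator factor $\mathbf{1}_{B_k \mathbb{Z}}(t)\,\mathbf{1}_{p\mathbb{Z}}(s-t)$ is nonzero. Since $\gcd(B_k,p)=1$ for all large $k$, the Chinese Remainder Theorem reduces the two conditions $B_k \mid t$ and $p \mid (s-t)$ to a single congruence $t \equiv t^{*}_{p,s} \pmod{B_k p}$. Thus the inner sum is over an arithmetic progression with common difference $B_k p$, and the bounds $M_k^{1+\beta} \leq B_k \leq 2M_k^{1+\beta}$ and $M_k \leq p \leq 2M_k$ give $B_k p \sim M_k^{2+\beta}$.

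Next I would use the decay $|\widehat{F}(\xi)| \lesssim_N (1+|\xi|)^{-N}$. Both denominators $B_k M_k^{1+\alpha-\beta}$ and $p\,M_k^{1+\alpha}$ are comparable to $M_k^{2+\alpha}$, so each $\widehat{F}$ factor is controlled by a decaying bump at scale $M_k^{2+\alpha}$. Using the elementary observation that $\max(|t|,|s-t|) \geq |s|/2$, one of the two factors yields the desired gain $(1+|s|/M_k^{2+\alpha})^{-N}$, and the other factor is summed over the arithmetic progression. Since the progression has spacing $\sim M_k^{2+\beta}$ while the summand is essentially supported in a window of width $M_k^{2+\alpha}$, the count of significant lattice points is $\sim \max(1, M_k^{\alpha-\beta})$, which I would write as $\lesssim 1+M_k^{\alpha-\beta}$.

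Finally, I would sum over the $|P_{M_k}|$ choices of $p$ and multiply by the prefactor $M_k^{\beta-\alpha-1}/(|P_{M_k}| + M_k^\beta + 1)$. The ratio $|P_{M_k}|/(|P_{M_k}|+M_k^\beta+1)$ is bounded by $1$, and after collecting powers of $M_k$ we obtain
\[
|\widehat{h}_2(s)| \;\lesssim\; M_k^{\beta-\alpha-1}\bigl(1+M_k^{\alpha-\beta}\bigr)\left(1+\frac{|s|}{M_k^{2+\alpha}}\right)^{-N} \;=\; \frac{1+M_k^{\beta-\alpha}}{M_k}\left(1+\frac{|s|}{M_k^{2+\alpha}}\right)^{-N},
\]
and taking $N=10$ gives \eqref{FDh_2eq1}. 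The main obstacle is the bookkeeping in the inner sum over the arithmetic progression: one must argue uniformly for small and large $|s|$ and handle both signs of $\alpha-\beta$ so that $1+M_k^{\alpha-\beta}$ (not merely $M_k^{\alpha-\beta}$) is what emerges. The coprimality of $B_k$ and $p$ is essential here, which is precisely why the lemma is restricted to $\beta \neq 0$: in the $\beta = 0$ case $B_k$ was chosen to lie in $P_{M_k}$, so a different analysis would be required.
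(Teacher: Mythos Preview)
Your proposal is correct and follows essentially the same route as the paper: use the Chinese Remainder Theorem (available because $\gcd(B_k,p)=1$ when $\beta\neq 0$) to reduce the inner $t$-sum to an arithmetic progression of spacing $B_kp\sim M_k^{2+\beta}$, extract the decay in $|s|$ from whichever $\widehat{F}$ factor has large argument via $\max(|t|,|s-t|)\geq |s|/2$, and count lattice points of the progression in a window of width $\sim M_k^{2+\alpha}$ to get the factor $1+M_k^{\alpha-\beta}$. The only cosmetic difference is that the paper packages the ``one factor gives the $|s|$-gain'' step as a single uniform bound on the product, namely $\bigl(1+|s|/M_k^{2+\alpha}\bigr)^{-10}\bigl(1+|t|/M_k^{2+\alpha}\bigr)^{-10}$, which avoids an explicit case split before summing in $t$.
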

\begin{proof}

For fixed $s$, if $|s -t| \geq |s| /2$,
\begin{equation*}
    \left| \widehat{F} \left( \frac{s-t}{p M_k^{1+\alpha}} \right)\right| \lesssim \left( 1+\frac{|s|}{M_k^{2+\alpha}} \right)^{-10},
\end{equation*}
and because $F$ is a Schwartz function, we also have that 
\begin{equation*}
\left|\widehat{F} \left(\frac{t}{B_k M_k^{1 + \alpha - \beta}} \right) \right| \lesssim \left(1 + \frac{|t|}{M_k^{2 + \alpha}} \right)^{-10}.
\end{equation*}
If $|s-t| \leq |s|/2$, then $|s| \sim |t|$. Thus, 
\begin{equation*}
    \left| \widehat{F} \left( \frac{t}{B_kM_k^{1+\alpha-\beta}} \right)\right| \lesssim \left( 1+\frac{|s|}{M_k^{2+\alpha}} \right)^{-10}\left( 1+\frac{|t|}{M_k^{2+\alpha}} \right)^{-10}.
\end{equation*}
and we have the trivial bound
\begin{equation*}
    \left| \widehat{F} \left(\frac{s - t}{p M_k^{1 + \alpha}} \right) \right| \lesssim 1.
\end{equation*}
Therefore, we have for all $t \in \mathbb{Z}$ that 
\begin{equation*}
    \left|\widehat{F} \left(\frac{t}{B_k M_k^{1 + \alpha - \beta}} \right) \widehat{F} \left(\frac{s - t}{p M_k^{1 + \alpha}} \right) \right| \lesssim \left(1 + \frac{|s|}{M_k^{2 + \alpha}} \right)^{-10} \left(1 + \frac{|t|}{M_k^{2 + \alpha}} \right)^{-10}.
\end{equation*}
Thus,
\begin{equation*}
|\widehat{h}_2(s) | \lesssim  \frac{M_k^{\beta-\alpha-1}}{|P_{M_k}| }\left( 1+\frac{|s|}{M_k^{2+\alpha}} \right)^{-10}\sum_{p \in P_{M_k}}\sum_{t \in \bZ}\left( 1+\frac{|t|}{M_k^{2+\alpha}} \right)^{-10}   \mathbf{1}_{B_k\bZ}(t) \mathbf{1}_{p\bZ}(s-t).
\end{equation*}
Assume that $\mathbf{1}_{B_k\bZ}(t) \mathbf{1}_{p\bZ}(s-t) \neq 0$. Then $t$ satisfies the congruences 
\begin{align*}
t & \equiv 0 \quad \text{(mod $B_k$)}, \\
t & \equiv s \quad \text{(mod $p$)}.
\end{align*}
Moreover, we have assumed that $B_k$ is relatively prime to $p$. Therefore, the Chinese remainder theorem implies that $t$ satisfies a congruence of the form 
\[t \equiv a_{s,p} B_k \text{(mod $B_k p$)}\]
for some $0 \leq a_{s,p} < p$. Thus $t = B_k p m + B_k a_{s,p}$ for some $m \in \mathbb{Z}$. Thus,
\begin{equation*}
\begin{split}
|\widehat{h}_2(s) | &\lesssim  \frac{M_k^{\beta-\alpha-1}}{|P_{M_k}| }\left( 1+\frac{|s|}{M_k^{2+\alpha}} \right)^{-10}\sum_{p \in P_{M_k}}\sum_{m \in \bZ}\left( 1+\frac{|pB_km + B_k a_{s,p}|}{M_k^{2+\alpha}} \right)^{-10}\\
& \lesssim  \frac{M_k^{\beta-\alpha-1}}{|P_{M_k}| }\left( 1+\frac{|s|}{M_k^{2+\alpha}} \right)^{-10}\sum_{p \in P_{M_k}}\sum_{m \in \bZ}\left( 1+\frac{|m + a_{s,p}/p|}{M_k^{\alpha-\beta}} \right)^{-10}.
\end{split}
\end{equation*}
It is easy to show that
$$
\sum_{m \in \bZ}\left( 1+\frac{|m + a_{s,p}/p|}{M_k^{\alpha-\beta}} \right)^{-10} \lesssim \begin{cases}
    M_k^{\alpha-\beta}& \text{if } \beta \leq \alpha, \\
    1& \text{if } \alpha <\beta.
\end{cases}
$$
Therefore, we obtain 
$$
|\widehat{h}_2(s) | \lesssim \frac{1+M_k^{\beta-\alpha}}{M_k}\left( 1+\frac{|s|}{M_k^{2+\alpha}} \right)^{-10}.
$$
\end{proof}

\begin{lem}\label{Lpfkmul}
Let $l > k$. For $|m| \leq M_k^{1+\alpha-\beta}/C_F$, we have

\begin{equation}\label{fkmulBksum}
    |\widehat{f_k \mu_l} (B_km)| \gtrsim \log(M_k)M_k^{-1}(1+M_k^\beta).
\end{equation}

\end{lem}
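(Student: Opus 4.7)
The plan is to exploit the factorization $f_k \mu_l = (f_k g_k)\,\mu_{l,k} = (h_1 + h_2)\,\mu_{l,k}$, which via Fourier convolution gives
\[
\widehat{f_k \mu_l}(B_k m)
= \sum_{t \in \bZ} \widehat{h_1}(B_k m - t)\,\widehat{\mu_{l,k}}(t)
+ \sum_{t \in \bZ} \widehat{h_2}(B_k m - t)\,\widehat{\mu_{l,k}}(t),
\]
and to extract the single term $\widehat{h_1}(B_k m)\,\widehat{\mu_{l,k}}(0)$ as the main term, of size $\sim \log(M_k)\,M_k^{-1}(1 + M_k^\beta)$; the remaining contributions will then be shown to be strictly smaller.

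For the main term, the explicit formula for $\widehat{h_1}$ combined with the hypothesis $|m| \leq M_k^{1+\alpha-\beta}/C_F$ yields $\widehat{F^2}(m/(B_k M_k^{1+\alpha-\beta}) \cdot B_k) = \widehat{F^2}(m/M_k^{1+\alpha-\beta}) \gtrsim 1$ by the choice of $C_F$ fixed in the preliminaries of Section \ref{sec_measure}; the prefactor $(1 + M_k^\beta)/(|P_{M_k}| + M_k^\beta + 1)$ is $\gtrsim \log(M_k)(1 + M_k^\beta)/M_k$ because $|P_{M_k}| \sim M_k/\log(M_k)$; and $\widehat{\mu_{l,k}}(0) \in [1/2, 2]$ follows by iterating \eqref{mukleq} exactly as in the derivation of \eqref{mu_i-1est}. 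Multiplying these three factors gives the claimed lower bound for the main term.

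The error from the $h_1$-convolution is handled by noting that $\widehat{h_1}$ is supported on $B_k \bZ$, so the remaining terms run over $t = B_k j$ with $j \neq 0$; since $|B_k j| \geq B_k \geq M_k^{1+\beta}$ vastly exceeds the frequency thresholds appearing in the re-indexed analogues of \eqref{convgMres2} and \eqref{convgMres3} for $\widehat{\mu_{l,k}}$, the values $|\widehat{\mu_{l,k}}(B_k j)|$ are much smaller than $1/M_k$, and the trivial bound on $|\widehat{h_1}|$ renders this error negligible. For the $h_2$-contribution (in the case $\beta \neq 0$), Lemma \ref{FDh_2} gives $|\widehat{h_2}(s)| \lesssim M_k^{-1}(1 + M_k^{\beta - \alpha})(1 + |s|/M_k^{2+\alpha})^{-10}$; pairing this with $|\widehat{\mu_{l,k}}(t)| \leq 2$ and summing the rapidly decaying kernel yields a contribution of order $M_k^{-1}(1 + M_k^{\beta - \alpha})$, which is dominated by the main term since $1 + M_k^{\beta - \alpha} \lesssim \log(M_k)(1 + M_k^\beta)$ in both sign ranges of $\beta$.

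I expect the main obstacle to be the tight control of the $h_2$-contribution, where the uniform bound from Lemma \ref{FDh_2} must be combined with the Fourier decay of $\widehat{\mu_{l,k}}$ and the large-$t$ tails of the convolution summed carefully using the $(1 + |s|/M_k^{2+\alpha})^{-10}$ factor. The case $\beta = 0$, which is not covered by Lemma \ref{FDh_2}, requires a separate and simpler analysis exploiting the fact that $B_k \in P_{M_k}$, so that the cross term $h_2$ collapses into an already-understood piece of the convolution structure of $g_k$.
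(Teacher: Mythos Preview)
Your overall architecture is exactly that of the paper: write $\widehat{f_k\mu_l}=\widehat{h_1}\ast\widehat{\mu_{l,k}}+\widehat{h_2}\ast\widehat{\mu_{l,k}}$, isolate $\widehat{h_1}(B_km)\,\widehat{\mu_{l,k}}(0)$ as the main term, kill the $h_1$-tail using the support condition $\widehat{h_1}\subset B_k\bZ$ together with the decay of $\widehat{\mu_{l,k}}$, and treat $\beta=0$ separately via $B_k\in P_{M_k}$. Those parts are fine.

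The genuine gap is in your treatment of the $h_2$-sum. You write that pairing Lemma~\ref{FDh_2} with the trivial bound $|\widehat{\mu_{l,k}}(t)|\leq 2$ and ``summing the rapidly decaying kernel'' gives a contribution of order $M_k^{-1}(1+M_k^{\beta-\alpha})$. This is false: the kernel $(1+|s|/M_k^{2+\alpha})^{-10}$ has $\ell^1$-norm $\sim M_k^{2+\alpha}$, so the trivial bound yields only $\sum_t|\widehat{h_2}(B_km-t)|\cdot 2 \lesssim M_k^{1+\alpha}(1+M_k^{\beta-\alpha})$, which swamps the main term. Your closing paragraph hints that Fourier decay of $\widehat{\mu_{l,k}}$ is needed, but you do not say how, and this is precisely where the work lies.

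The paper closes this gap by splitting the $t$-sum into three ranges. For $|t|\geq M_k^{10(2+\alpha)}$ one uses the tail decay of $\widehat{h_2}$ from Lemma~\ref{FDh_2}. For $\log^{1/10}(M_k)\leq |t|\leq M_k^{10(2+\alpha)}$ one first reduces $\widehat{\mu_{l,k}}(t)$ to $\widehat{\mu_{k-1}}(t)$ via iterated use of \eqref{mukleq} (valid since $M_k^{10(2+\alpha)}\leq \tfrac12\min(M_{k+1},M_{k+1}^{1+\beta})$), and then invokes \eqref{convgMres3} for $\widehat{\mu_{k-1}}$ (valid since $\log^{1/10}(M_k)\geq 2M_{k-1}^{2+\alpha}$ by \eqref{Mcond4}). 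Only on the tiny range $|t|\leq\log^{1/10}(M_k)$ is the trivial bound $|\widehat{\mu_{l,k}}|\leq 2$ used, and there the number of terms is $\lesssim\log^{1/10}(M_k)$. The outcome is $\sum_t|\widehat{h_2}(B_km-t)\,\widehat{\mu_{l,k}}(t)|\lesssim \log^{1/10}(M_k)\,M_k^{-1}(1+M_k^{\beta-\alpha})$. The $\log^{1/10}$ threshold is not cosmetic: when $-1<\beta<0$ the main term is only $\sim\log(M_k)\,M_k^{-1}$, so the $h_2$-error must beat it by a sub-logarithmic factor, and the choice of cutoff is what makes this work.
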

\begin{proof}

We claim that the following estimates hold for sufficiently large $k$ and for $|m| \leq M_k^{1+\alpha-\beta}/C_F$:
\begin{equation}\label{h_1zero}
|\widehat{h}_1(B_km)(g_l \cdots g_{k+1}\mu_{k-1})^\wedge(0)| \gtrsim \log(M_k)M_k^{-1}(1+M_k^\beta);
\end{equation}
\begin{equation}\label{h_1nonzero}
\sum_{ t \neq 0}|\widehat{h}_1(B_km-t)(g_l \cdots g_{k+1}\mu_{k-1})^\wedge(t)| \lesssim \log^3(M_k)M_k^{-99}(1+M_k^\beta);
\end{equation}
\begin{equation}\label{h_2all_1}
\sum_{ t \in \bZ }|\widehat{h}_2(B_km-t)(g_l \cdots g_{k+1}\mu_{k-1})^\wedge(t)| \lesssim \log^{1/10}(M_k)M_k^{-1}(1+ M_k^{\beta-\alpha}) \qquad \text{if} \  \beta  \neq 0.
\end{equation}

Recall that $\alpha >0$. By combining \eqref{h_1zero}, \eqref{h_1nonzero} and \eqref{h_2all_1}, we get \eqref{fkmulBksum} when $\beta \neq 0$.

If $\beta=0$, it suffices to consider $h_1$ only. Since $\beta=0$,  let $B_k=p \in P_{M_k}$. We obtain 
\begin{equation*}
\begin{split}
h_2 =&  \frac{M_k^{1+\alpha}}{|P_{M_k}|+2} \sum_{|r| \leq (p-1)/2} F^2(M_k^{1+\alpha}(px-r))\\
&+\frac{M_k^{1+\alpha}}{|P_{M_k}|+2} \sum_{\substack{p' \in P_{M_k}\\ p' \neq p}} \sum_{|r| \leq (p'-1)/2} F(M_k^{1+\alpha}(p'x-r))\sum_{|a| \leq (p-1)/2}F(M_k^{1+\alpha}(px-a)).    
\end{split}
\end{equation*}
Let $h_{2,1}$ and $h_{2,2}$ denote the first term and the second term respectively. Then, $h_{2,1}$ is a bounded multiple of $h_1$. The function $h_{2,2}$ is nonzero only when $|x| \lesssim M_k^{-(2+\alpha)}$ in $\bT$ since rational numbers in $\{r/p', a/p : p' \in P_{M_k}, p \neq p'\}$ are $M_k^{-2}$-separated. However, $\mu_{k-1}$ is supported away from the origin since $F_0=0$ if $|x| \leq 1/8$. Therefore, $h_{2,2}\cdot (g_l \cdots g_{k+1}\mu_{k-1}) =0$. By \eqref{h_1zero} and \eqref{h_1nonzero}, we obtain \eqref{fkmulBksum} when $\beta=0$.

\paragraph{\textbf{Proof of \eqref{h_1zero}}}
By our choice of the constant $C_F$, the assumption $|m| \leq M_k^{1+\alpha-\beta}/C_F$ implies 
\begin{equation*}
|\widehat{h}_1(B_km)| \gtrsim \log(M_k)M_k^{-1}(1+M_k^\beta).
\end{equation*}
By iterating \eqref{convgMres1} and \eqref{mukleq}, we get
\begin{equation}\label{diffF_00}
|(g_l \cdots g_{k+1}\mu_{k-1})^\wedge(0) -\widehat{F}_0(0)| \leq \sum_{i=1}^l \log(M_i)M_i^{-96} \leq M_1^{-90}.
\end{equation}
Since $\widehat{F}_0(0)=1$, we have $|(g_l \cdots g_{k+1}g_{k-1} \cdots g_1F_0)^\wedge(0)| \gtrsim 1$. Thus, \eqref{h_1zero} follows.

\paragraph{\textbf{Proof of \eqref{h_1nonzero}}} The proof is similar to that of the estimate on $\mu$ for $k$-intervals, but we also use the properties of $\widehat{h}_1$.

If $|t| \geq M_k^{10(2+\alpha)}$, $|B_km-t| \geq |t|/2$ holds since $|B_km| \lesssim M_k^{2+\alpha}$. Thus, we get
\begin{equation*}
|\widehat{h}_1(B_km-t)| \lesssim \log(M_k)M_k^{-1}(1+M_k^\beta) \left(\frac{M_k^{2+\alpha}}{|t|} \right)^{10}.
\end{equation*}
By \eqref{mukleq2}, we obtain 
\begin{equation*}
\sum_{ |t| \geq M_k^{10(2+\alpha)}}|\widehat{h}_1(B_km-t)(g_l \cdots g_{k+1}\mu_{k-1})^\wedge(t)| \lesssim \log(M_k)(1+M_k^\beta)M_k^{-1-80(2+\alpha)}.
\end{equation*}
If $|t| \leq M_k^{10(2+\alpha)}$, by iterating \eqref{mukleq}, we have
\begin{equation*}
|(g_l \cdots g_{k+1} \mu_{k-1})^\wedge(t) | \lesssim |\widehat{\mu}_{k-1}(t)| + M_{k+1}^{-90}.
\end{equation*}
Also, observe that $t=B_km$ for some nonzero $m$ since $\widehat{h}_1$ is supported on $B_k\bZ$ and $t \neq 0$. Thus, $|t| \geq M_k^{1+\beta} \geq 2M_{k-1}^{2+\alpha}$, and \eqref{convgMres3} with $N=N(\beta)$ together with \eqref{Mcond4} imply that
\begin{equation*}
\begin{split}
|\widehat{\mu}_{k-1}(t)|&\leq \log(M_{k-1}) M_{k-1}^{-1+(2+\alpha)(N(\beta)+1)}(1+M_{k-1}^\beta)\log(|t|)|t|^{-N(\beta)}\\
&\lesssim \log(M_k)\log(|t|)|t|^{-N(\beta)}.   
\end{split}
\end{equation*}
Using $|\widehat{h}_1(s)| \lesssim \log(M_k)M_k^{-1}(1+M_k^\beta)$ and \eqref{mukleq}, we obtain
\begin{equation*}
\begin{split}
\sum_{0< |t| \leq M_k^{10(2+\alpha)}}&|\widehat{h}_1(B_km-t)(g_l \cdots g_{k+1}\mu_{k-1})^\wedge(t)|\\
&\lesssim \log(M_k)M_k^{-1}(1+M_k^\beta) \left(M_k^{10(2+\alpha)}M_{k+1}^{-90} + \log(M_k)\sum_{|t| \geq M_k^{1+\beta}}\log(|t|)|t|^{-N(\beta)} \right)\\
&\lesssim \log^3(M_k)M_k^{-99}(1+M_k^\beta).
\end{split}
\end{equation*}
In the first inequality, we used that $|t| \geq M_k^{1+\beta}$ because of the support of $\widehat{h}_1$. Hence, we get \eqref{h_1nonzero}.

\paragraph{\textbf{Proof of \eqref{h_2all_1}}}

By the same method we used in the proof of \eqref{h_1nonzero}, we use Lemma \ref{FDh_2} and obtain
\begin{equation*}
        \sum_{|t| \geq M_k^{10(2+\alpha)}}|\widehat{h}_2(B_km-t)(g_l \cdots g_{k+1}\mu_{k-1})^\wedge(t)|\lesssim (1+ M_k^{\beta-\alpha})M_k^{-1-80(2+\alpha)}.
\end{equation*}
By \eqref{Mcond4}, we have $\log(M_k)^{1/10} \geq 2M_{k-1}^{2+\alpha}$. By Lemma \ref{FDh_2}, $|\widehat{h}_2(s)| \lesssim M_k^{-1}(1+M_k^{\beta-\alpha})$. Thus, we use \eqref{convgMres3} with $N=99$ and \eqref{mukleq} to obtain
\begin{equation*}
    \begin{split}
        \sum_{\log^{1/10}(M_k) \leq |t| \leq M_k^{10(2+\alpha)}}&|\widehat{h}_2(B_km-t)(g_l \cdots g_{k+1}\mu_{k-1})^\wedge(t)| \nonumber \\
&\lesssim  M_k^{-1}(1+M_k^{\beta-\alpha}) \left(M_k^{10(2+\alpha)}M_{k+1}^{-90} + \log(M_k)\sum_{|t| \geq \log^{1/10}(M_k)}\log(|t|)|t|^{-99} \right) \nonumber\\
&\lesssim \log^{-5}(M_k)M_k^{-1} (1+M_k^{\beta-\alpha}).    
    \end{split}
\end{equation*}
By Lemma \ref{FDh_2} and \eqref{mukleq2}, we get
\begin{equation}\label{h_2all_eq}
\sum_{|t| \leq \log(M_k)^{1/10}}|\widehat{h}_2(B_km-t)(g_l \cdots g_{k+1}\mu_{k-1})^\wedge(t)| \lesssim \log^{1/10}(M_k)M_k^{-1} (1+M_k^{\beta-\alpha}).
\end{equation}
Combining the three cases, we get \eqref{h_2all_1}.

\end{proof}

\subsection{Proof of \texorpdfstring{\eqref{LqLpest2}}{LqLpest2}  and \texorpdfstring{\eqref{LqLpest2_2}}{LqLpest2-2} } 

Now we complete the proofs of Theorem \ref{main-result} and Theorem \ref{main-result2} by proving \eqref{LqLpest2} for $\beta \geq 0$ and \eqref{LqLpest2_2} for $\beta < 0$.  

Let us prove \eqref{LqLpest2} first. Assume $\beta \geq 0$. Observe that each $F(M_k^{\alpha-\beta+1}(B_kx-a))$ is supported on a $k$-interval in $\mathcal{J}_2$ and their supports are disjoint. Since $|F| \lesssim 1$ and $\beta \geq 0$,  \eqref{muI_kint_3} implies that
\begin{equation}\label{f_kLq}
\norm{f_k}_{L^q(\mu)}^q \lesssim \log^3(M_k)  M_k^{-2}B_k \lesssim \log^3(M_k)M_k^{\beta-1} \qquad \text{for} \ 1 \leq q < \infty.
\end{equation}
We will estimate $\mathcal{F}(f_k \mu)(\xi+s)$ for $s \in \mathbb{Z}$ and $|\xi| \leq (\log(M_k))^{-3}$. In fact, since $\mu$ is supported on $[-1/2,1/2]$, we obtain
\begin{align*}
\left| \mathcal{F}(f_k \mu)(\xi + s) - \mathcal{F}(f_k \mu)(s) \right|= & \left| \int f_k(x) \left(e^{2 \pi i (\xi + s)x} - e^{2 \pi i s x} \right) \, d \mu(x) \right| \\
\leq & \int |f_k(x)| 2 \pi |x \xi| \, d \mu(x) \\
\lesssim & |\xi| \int |f_k(x)| \, d \mu(x) \\
\lesssim & |\xi| \log^3( M_k) M_k^{\beta - 1}.
\end{align*}
In the last inequality, we used \eqref{f_kLq}. The bound on $\xi$ guarantees that this is much smaller than $\log (M_k) M_k^{\beta - 1} $. By Lemma \ref{Lpfkmul}, for large $k$, we have that $\mathcal{F}(f_k \mu)(B_k m + \xi) \gtrsim \log(M_k) M_k^{\beta - 1} $ whenever $|m| \leq M_k^{1 + \alpha - \beta}/C_F$ . Thus, we obtain
\begin{equation*}
\norm{\mathcal{F}(f_k\mu)}_{L^p(\lambda)}^p \geq \int_{|\xi| \leq (\log M_k)^{-3}} \sum_{s \in \bZ} |\mathcal{F}(f_k\mu)(s+\xi)|^p \gtrsim M_k^{\alpha-\beta+1} \log(M_k)^{-3} (\log(M_k)M_k^{\beta-1})^p.
\end{equation*}
Therefore, we get
\begin{equation*}
\frac{\norm{\mathcal{F}(f_k\mu)}_{L^p(\lambda)}}{\norm{f_k}_{L^q(\mu)}} \gtrsim (\log(M_k))^{1-\frac{3}{p}-\frac{3}{q}} M_k^{ (1 - \beta)(\frac{1}{q}-1)+\frac{\alpha-\beta+1}{p}}.
\end{equation*}
If $p < p_+(q)$, the right hand side diverges to $\infty$ as $k \rightarrow \infty$. We have established \eqref{LqLpest2}.

The proof of \eqref{LqLpest2_2} is similar. Since $\beta <0$, instead of \eqref{f_kLq}, we have
\begin{equation*}
\norm{f_k}_{L^q(\mu)}^q \lesssim \log^3(M_k)  M_k^{-2-\beta}B_k \lesssim \log^3(M_k)M_k^{-1} \qquad \text{for} \ 1 \leq q < \infty.
\end{equation*}
We use Lemma \ref{Lpfkmul} when $-1<\beta<0$ and we obtain
\begin{equation*}
\norm{\mathcal{F}(f_k\mu)}_{L^p(\lambda)}^p \gtrsim M_k^{\alpha-\beta+1} \log(M_k)^{-3} (\log(M_k)M_k^{-1})^p.
\end{equation*}
Thus,
\begin{equation*}
\frac{\norm{\mathcal{F}(f_k\mu)}_{L^p(\lambda)}}{\norm{f_k}_{L^q(\mu)}} \gtrsim (\log(M_k))^{1-\frac{3}{p}-\frac{3}{q}} M_k^{(\frac{1}{q}-1)+\frac{\alpha-\beta+1}{p}}.
\end{equation*}
If $p < p_-(q)$, the right hand side diverges to $\infty$ as $k \rightarrow \infty$. We have established \eqref{LqLpest2_2}.

\section*{Declarations}
\subsection*{Funding}
The authors did not receive support from any organization for the submitted work.
\subsection*{Competing Interests}
On behalf of all authors, the corresponding author states that there is no conflict of interest.
\subsection*{Data Availability Statement}
We do not analyse or generate any datasets, because our work is a mathematical proof. 
\bibliographystyle{myplainurl}
\bibliography{refs}

\end{document}